\documentclass[12pt,a4paper,twoside,reqno]{amsart}
\usepackage[british,UKenglish,USenglish,english,american]{babel}

\usepackage{tikz}
\usetikzlibrary{arrows}

\usepackage{amsthm,amssymb,amsmath} 

\usepackage[utf8]{inputenc}

\usepackage{enumerate}
\usepackage[all]{xy}

\usepackage{mathtools}

\makeatletter
\def\@map#1#2[#3]{\mbox{$#1 \colon\thinspace #2 \longrightarrow #3$}}
\def\map#1#2{\@ifnextchar [{\@map{#1}{#2}}{\@map{#1}{#2}[#2]}}
\g@addto@macro\@floatboxreset\centering
\makeatother

\renewcommand{\epsilon}{\ensuremath{\varepsilon}}
\renewcommand{\phi}{\ensuremath{\varphi}}

\renewcommand{\to}{\ensuremath{\longrightarrow}}
\renewcommand{\mapsto}{\ensuremath{\longmapsto}}

\newcommand{\Z}{\ensuremath{\mathbb{Z}}}

\newcommand{\ang}[1]{\ensuremath{\left\langle #1\right\rangle}}

\newtheoremstyle{theoremm}{}{}{\itshape}{}{\scshape}{.}{ }{}
\theoremstyle{theoremm}
\newtheorem{thm}{Theorem}[section]
\newtheorem{lem}[thm]{Lemma}
\newtheorem{prop}[thm]{Proposition}
\newtheorem{cor}[thm]{Corollary}

\newtheoremstyle{thmintro}{}{}{\itshape}{}{\scshape}{.}{ }{}
\theoremstyle{thmintro}
\newtheorem{theoremA}{Theorem}

\newtheoremstyle{remark}{}{}{}{}{\scshape}{.}{ }{}

\theoremstyle{remark}
\newtheorem{defn}[thm]{Definition}
\newtheorem{rem}[thm]{Remark}

\newcommand{\aut}[1]{\ensuremath{\operatorname{\text{Aut}}\left({#1}\right)}}

\renewcommand{\ker}[1]{\ensuremath{\operatorname{\text{Ker}}\left({#1}\right)}}

\newcommand{\redefn}[1]{Definition~\protect\ref{defn:#1}}
\newcommand{\rethm}[1]{Theorem~\protect\ref{thm:#1}}
\newcommand{\relem}[1]{Lemma~\protect\ref{lem:#1}}
\newcommand{\reprop}[1]{Proposition~\protect\ref{prop:#1}}

\newcommand{\rerem}[1]{Remark~\protect\ref{rem:#1}}

\numberwithin{equation}{section}
\allowdisplaybreaks

\usepackage[pdftex,%
bookmarks=true,
breaklinks=true,
bookmarksnumbered = true,
colorlinks= true,
urlcolor= green,
anchorcolor = yellow,
citecolor=blue,
]{hyperref}                   

\begin{document}

\title{Universal virtual braid groups}

\author[Oscar Ocampo]{Oscar Ocampo}
\address{Universidade Federal da Bahia, Departamento de Matem\'atica - IME, CEP:~40170-110 - Salvador, Brazil}
\email{oscaro@ufba.br}

\subjclass[2020]{Primary 20F36; Secondary 20F65, 20E26}

\keywords{Virtual braid groups; right-angled Artin groups; subgroup separability; Howson property; finite quotients}

\date{\today}

\begin{abstract}

We introduce the universal virtual braid group $UV_n(c)$, which provides a unified algebraic framework for virtual braid--type structures with $c$ types of crossings and admits natural quotient maps onto the standard families in the literature. 
We prove that $UV_n(c)$ contains a right-angled Artin subgroup of finite index, yielding strong structural consequences: residual finiteness, linearity, solvability of the word and conjugacy problems, and the Tits alternative. For $n\ge 5$, the commutator subgroup $UV_n(c)'$ is perfect, and every non-abelian finite image contains a subgroup isomorphic to the symmetric group $S_n$; in particular, $S_n$ is the smallest non-abelian finite quotient. These rigidity phenomena persist under a broad class of natural quotients, including virtual braid, virtual singular braid, virtual twin and multi-virtual braid groups. 
We further obtain a complete classification of subgroup separability (LERF) and the Howson property for $UV_n(c)$ and its pure subgroup $PUV_n(c)$, showing that both properties hold precisely for $n\le 3$. We also compute the virtual cohomological dimension, determine the center, prove that the finite-index RAAG subgroup is characteristic, and construct explicit finite quotients of $UV_n(c)$ whose order is strictly larger than $n!$.

\end{abstract}

\maketitle

\section{Introduction}

Braid groups and their generalizations form a fundamental class of groups in geometric and combinatorial group theory.
Virtual braid groups were introduced by Kauffman~\cite{Kau} in the context of virtual knot theory and further studied by Bardakov~\cite{B}, Vershinin~\cite{Vershinin}, and others. 
Further extensions, including virtual singular braids~\cite{CPM} and multi--virtual braid groups~\cite{Kau2}, enrich the crossing structure while retaining a Coxeter--type virtual symmetry. 
Despite their common features, these families are typically treated as separate constructions, which suggests the existence of a universal algebraic structure from which they arise as natural quotients and whose intrinsic properties dictate their global algebraic behavior.

The definition of the universal virtual braid group \( UV_n(c) \) is motivated by two complementary observations. 
First, several families of virtual braid-type groups (such as the virtual braid groups \( VB_n \), the virtual singular braid groups \( VSG_n \), the virtual twin groups \( VT_n \), and the multi-virtual braid groups \( M_kVB_n \)) share algebraic properties whose proofs often follow similar patterns, yet are handled separately in the literature. By working with a universal group that admits each of these families as a quotient, one can unify such proofs: for properties that are preserved under taking quotients (e.g., residual finiteness, or the property that the commutator subgroup is perfect), establishing them at the level of \(UV_{n}(c)\) immediately yields the corresponding results for all quotients that retain the virtual generators intact.

Second, and more subtly, the universal group approach allows one to investigate \emph{which properties are inherited and which are not}. 
Studying a property \( \mathcal{P} \) in \( UV_n(c) \) and examining whether it passes to the quotients provides not only a unified proof but also a deeper understanding of the quotients themselves: properties that fail to be inherited reveal essential aspects of the interaction between crossing generators and virtual generators in each specific family. This duality between unification and discrimination is a central theme of the present work.

With this motivation in mind, we now introduce the \emph{universal virtual braid group} \( UV_n(c) \), defined for \( n \geq 2 \) strands and \( c \geq 1 \) types of (non-virtual) crossings (see Definition~\ref{defn:uvnc}). 
The group \( UV_n(c) \) admits canonical quotient maps onto the standard virtual braid, virtual singular, and multi-virtual braid-type groups (Proposition~\ref{prop:natural-quotients}), and therefore serves as a unifying structural object within the landscape of virtual braid-type constructions. 
Our first main result identifies a finite--index right--angled Artin subgroup.

\begin{theoremA}\label{thm:A}
For every $n\ge 2$ and $c\ge 1$, the group $UV_n(c)$ contains a right--angled Artin subgroup of finite index.
\end{theoremA}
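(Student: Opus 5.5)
We first guess the definition, which is not shown: $UV_n(c)$ is generated by the virtual generators $v_1,\dots,v_{n-1}$, which satisfy the Coxeter relations of $S_n$, together with $c$ families of crossing generators $\sigma_i^{(k)}$, $1\le k\le c$. The only relations are the Coxeter relations among the $v_i$ and the mixed relations: $v_i\sigma_j^{(k)}=\sigma_j^{(k)}v_i$ for $|i-j|\ge2$, and $v_iv_{i+1}\sigma_i^{(k)}=\sigma_{i+1}^{(k)}v_iv_{i+1}$. In particular, no braid-type relations are imposed among the crossing generators themselves. Under this assumption, the plan is to imitate the Bardakov--Bellingeri analysis of $VB_n$.

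\textbf{Step 1: the finite-index subgroup.} There is an epimorphism $\pi\colon UV_n(c)\to S_n$ sending $v_i\mapsto s_i$ and $\sigma_i^{(k)}\mapsto 1$. Its kernel $K=PUV_n(c)$ has index $n!$, and the subgroup $\langle v_i\rangle\cong S_n$ is a section. Hence $UV_n(c)=K\rtimes S_n$.

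\textbf{Step 2: a presentation of $K$.} Apply Reidemeister--Schreier with the transversal $S_n$, or argue directly. For each ordered pair $(i,j)$ with $i\ne j$ and each $k$, define $\lambda^{(k)}_{i,j}$ as the conjugate of $\sigma_1^{(k)}$ (or of $\sigma_1^{(k)}v_1$, depending on the precise normalisation) by the permutation word carrying positions $1,2$ to $i,j$. This gives $c\,n(n-1)$ generators, and $S_n$ permutes them via $w\lambda^{(k)}_{i,j}w^{-1}=\lambda^{(k)}_{w(i),w(j)}$. The mixed relations say exactly that this action is well defined: the relation $v_i\sigma_j^{(k)}=\sigma_j^{(k)}v_i$ for distant indices shows that the stabiliser of the pair $(1,2)$ fixes $\lambda^{(k)}_{1,2}$.

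Each defining relation that survives the rewriting is then either trivial or a conjugate of a commutation relation. The crossing--virtual commutations for $|i-j|\ge2$, rewritten, become $[\lambda^{(k)}_{i,j},\lambda^{(l)}_{r,s}]=1$ for disjoint pairs. Here I need care about which commutations are actually imposed, for instance whether $\sigma_i^{(k)}$ and $\sigma_j^{(l)}$ commute for distant $i,j$ and whether $k=l$ is required. The expected outcome is that $K$ has a presentation with generators $\lambda^{(k)}_{i,j}$ and relations only of the form $[\lambda^{(k)}_{i,j},\lambda^{(l)}_{r,s}]=1$ whenever $\{i,j\}\cap\{r,s\}=\emptyset$. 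That is a right-angled Artin group, with defining graph given by a disjointness relation on labelled ordered pairs.

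\textbf{Step 3: the main obstacle.} The hard part is verifying that Reidemeister--Schreier produces no further relations. To do this I would build the semidirect product $G=A_\Gamma\rtimes S_n$ directly, where $A_\Gamma$ is the candidate RAAG and $S_n$ acts by permuting indices; this action preserves the commutation graph, so it is by automorphisms. I would then define mutually inverse homomorphisms $UV_n(c)\leftrightarrow G$, checking the defining relations on both sides. On the $G$ side, the relations of $A_\Gamma$ are conjugates of the distant commutation relations, and the twisted relations hold because of how the action is defined. This identifies $K\cong A_\Gamma$, which completes the proof.

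If instead the definition does impose braid-like relations among crossings, such as the relations for $\lambda_{i,j}\lambda_{i,k}\lambda_{j,k}$ in $VB_n$, the kernel would not be a RAAG. In that case I would reconsider the statement, but the phrase ``universal'' suggests that only commutation-type relations are present.
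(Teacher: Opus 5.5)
Your strategy is the paper's. The finite-index subgroup is the kernel $KUV_n(c)$ of $\pi_n^K$ ($\sigma_{i,t}\mapsto 1$, $\rho_i\mapsto s_i$); the paper reserves $PUV_n(c)$ for the kernel of $\sigma_{i,t}\mapsto s_i$. It is complemented by $\langle\rho_1,\dots,\rho_{n-1}\rangle\cong S_n$ and generated by the $c\,n(n-1)$ elements $\delta_{i,j,t}$, your $\lambda^{(t)}_{i,j}$. These must be conjugates of $\sigma_{1,t}$ itself, not of $\sigma_{1,t}\rho_1$, which does not lie in this kernel.

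One step is wrong as written. You derive the commutations $[\lambda^{(k)}_{i,j},\lambda^{(l)}_{r,s}]=1$ for disjoint pairs from the crossing--virtual relations (MR1). But you had already, correctly, used those relations to make the $S_n$-action well defined, and that is all they can do. In a Reidemeister--Schreier rewriting with transversal $\iota(S_n)$, put $x_{w,i,t}=\iota(w)\sigma_{i,t}\iota(w)^{-1}$. Then (MR1) and (MR2) become the identifications $x_{w,i,t}=x_{ws_j,i,t}$ for $|i-j|\ge 2$ and $x_{w,i+1,t}=x_{ws_is_{i+1},i,t}$. These collapse the $x_{w,i,t}$ onto the $\delta_{i,j,t}$ but never relate two distinct $\delta$'s.

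The commutations come instead from the crossing--crossing relation (CR): $\sigma_{i,t}\sigma_{j,\ell}=\sigma_{j,\ell}\sigma_{i,t}$ for $|i-j|\ge 2$. Your guessed presentation omits it, but it is part of Definition~\ref{defn:uvnc}, imposed for all $1\le t,\ell\le c$, so there is no $k=l$ restriction. Its conjugates are exactly $[\delta_{w(i),w(i+1),t},\delta_{w(j),w(j+1),\ell}]=1$, i.e.\ all disjoint-pair commutations. Without (CR) the kernel would be free of rank $c\,n(n-1)$. That is still a RAAG, so the theorem would survive, but your announced presentation would be false and your Step~3 check that the RAAG relators map to $1$ would fail.

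With (CR) in place, your Step~3 works and is more complete than the paper's argument. The proof of Theorem~\ref{thm:preskuvnc} only argues heuristically that no relation ``forces'' non-disjoint generators to commute, which does not exclude other relations. Your maps are $\Phi(\rho_i)=s_i$, $\Phi(\sigma_{i,t})=\lambda^{(t)}_{i,i+1}$, and $\Psi(s_i)=\rho_i$, $\Psi(\lambda^{(t)}_{a,b})=\iota(w)\sigma_{1,t}\iota(w)^{-1}$ for any $w$ with $w(1)=a$, $w(2)=b$. They are well defined:
\begin{itemize}
\item independence of $w$ uses (MR1), since the pointwise stabiliser of $\{1,2\}$ is generated by $s_3,\dots,s_{n-1}$;
\item writing $\sigma_{i,t}$ as a $\rho$-conjugate of $\sigma_{1,t}$ uses (MR2);
\item each RAAG relator goes to a conjugate of $[\sigma_{1,k},\sigma_{3,l}]$, which (CR) kills;
\item the semidirect relations hold by construction.
\end{itemize}
The two maps are mutually inverse on generators. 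Hence $UV_n(c)\cong A_\Gamma\rtimes S_n$ with $KUV_n(c)$ corresponding to $A_\Gamma$, which proves the presentation; the Reidemeister--Schreier computation above gives the same result.
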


More precisely, the subgroup $KUV_n(c)=\ker{\pi_n^K}$ (Definition~\ref{defn:puvkuv}) is a right--angled Artin group of index $n!$ in $UV_n(c)$ (Theorem~\ref{thm:preskuvnc} and Remark~\ref{rem:index}).
This structural decomposition places $UV_n(c)$ within the class of groups virtually of RAAG type (cf.~\cite{Davis2008}), yielding strong algebraic consequences such as residual finiteness and linearity over $\mathbb{Z}$ (Theorem~\ref{thm:virtually-raag}).
Moreover, it allows us to determine the virtual cohomological dimension of $UV_n(c)$, which we show to be $\lfloor n/2\rfloor$ (Theorem~\ref{thm:vcd}), and to establish that $UV_n(c)$ is Hopfian and has exponential growth (Theorem~\ref{thm:structural} and Theorem~\ref{thm:virtually-raag}).

A second structural mechanism emerges in the analysis of finite quotients and derived subgroups.

\begin{theoremA}\label{thm:B}
For $n\ge 5$, the commutator subgroup $UV_n(c)'$ is perfect. Furthermore, every non--abelian finite image of $UV_n(c)$ contains a subgroup isomorphic to the symmetric group $S_n$.
In particular, $S_n$ is the smallest non--abelian finite quotient of $UV_n(c)$.
\end{theoremA}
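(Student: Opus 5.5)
We have not seen Definition~\ref{defn:uvnc}, so we assume the natural presentation. $UV_n(c)$ is generated by virtual generators $\rho_1,\dots,\rho_{n-1}$ satisfying the Coxeter relations of $S_n$, together with crossing generators $\sigma_i^{(k)}$ for $1\le i\le n-1$ and $1\le k\le c$. The relations are the mixed relations $\rho_i\sigma_j^{(k)}\rho_i=\rho_j\sigma_i^{(k)}\rho_j$ for $|i-j|=1$, and far commutation $[\sigma_i^{(k)},\rho_j]=1$, $[\sigma_i^{(k)},\sigma_j^{(l)}]=1$ for $|i-j|\ge 2$. There are no braid relations among the $\sigma$'s.

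\textbf{Abelianization.} First compute $UV_n(c)^{ab}$. The Coxeter relations identify all $\rho_i$ to one element $\rho$ with $\rho^2=1$. For $n\ge 3$ the mixed relation gives $\sigma_j^{(k)}=\sigma_i^{(k)}$ in the abelianization. Hence $UV_n(c)^{ab}\cong \Z_2\oplus\Z^c$, generated by $\rho$ and $s_1,\dots,s_c$.

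\textbf{Perfectness of $UV_n(c)'$.} We use the Reidemeister--Schreier style criterion (as in Gorin--Lin for $B_n$): find a generating set for $G'$ and show each generator is a product of commutators of elements of $G'$. A more direct route is the following.

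Set $N=UV_n(c)''$, which is normal in $UV_n(c)$. Consider $Q=UV_n(c)/N$; we must show $Q'=1$. In $Q$ the derived subgroup $Q'$ is abelian. For $n\ge5$ the images of $\rho_i$ generate a quotient of $S_n$. Its commutator subgroup is a quotient of $A_n$, which is perfect, and it lies in the abelian group $Q'$. So it is trivial, and therefore $\rho_i\rho_j=1$ in $Q$ for all $i,j$. In particular $\rho_1\rho_3$ and $\rho_1\rho_2$ are trivial.

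Next show that the elements $\sigma_i^{(k)}(\sigma_j^{(k)})^{-1}$, which lie in $G'$, become trivial in $Q$. The mixed relation $\sigma_{i+1}^{(k)}=(\rho_i\rho_{i+1})\sigma_i^{(k)}(\rho_{i+1}\rho_i)$ reads in $Q$ as conjugation by the trivial element $\rho_i\rho_{i+1}$. Hence $\sigma_{i+1}^{(k)}=\sigma_i^{(k)}$ in $Q$.

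It follows that $Q$ is generated by one involution $\rho$ and elements $s_1,\dots,s_c$. Far commutation for $n\ge 4$ gives $[\sigma_1^{(k)},\rho_3]=1$ and $[\sigma_1^{(k)},\sigma_3^{(l)}]=1$. Thus $\rho$ and all the $s_k$ commute in $Q$, so $Q$ is abelian and $Q'=1$. This gives $G'=G''$.

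\textbf{Finite images.} Let $\phi\colon UV_n(c)\to F$ be a surjection onto a non-abelian finite group. Restricted to $\langle\rho_i\rangle\cong S_n$, the kernel is a normal subgroup of $S_n$, so it is $1$, $A_n$, or $S_n$ for $n\ge5$.

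If the kernel is $A_n$ or $S_n$, then $\phi(\rho_i\rho_{i+1})=1$. The argument above then shows that $\phi(\sigma_i^{(k)})$ is independent of $i$. Far commutation forces $F$ to be abelian, which is a contradiction.

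Hence $\phi$ is injective on $S_n$, and $F$ contains $S_n$. Finally, $S_n$ is itself a quotient: map $\sigma_i^{(k)}\mapsto \rho_i$ (or to $1$) and check that the relations hold. So $S_n$ is the smallest non-abelian finite quotient.

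\textbf{Main obstacle.} The delicate step is the exact form of the mixed relations in Definition~\ref{defn:uvnc}. The argument needs a relation expressing $\sigma_{i+1}^{(k)}$ as a conjugate of $\sigma_i^{(k)}$ by an element of $\langle\rho\rangle'$, together with far commutation involving $\rho_j$ for some $|i-j|\ge2$. If the mixed relation is instead $\rho_i\rho_{i+1}\sigma_i^{(k)}\rho_{i+1}\rho_i=\sigma_{i+1}^{(k)}$, the same argument applies verbatim. The bound $n\ge5$ is used precisely for the perfectness of $A_n$ and the normal-subgroup classification of $S_n$.
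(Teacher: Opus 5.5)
Your proof is correct. The finite-image half is essentially the paper's own argument (Lemma~\ref{lem:rho_image}, Proposition~\ref{prop:finite_image}, Theorem~\ref{thm:minimal_sn}). Your worry about the mixed relation is unfounded: the paper's (MR2) is exactly $\sigma_{i+1,t}=\rho_i\rho_{i+1}\sigma_{i,t}\rho_{i+1}\rho_i$, and (MR1) gives the far commutation with $\rho_j$. You explicitly use (MR1) to make the common image of the $\rho_i$ commute with the $s_t$. The paper's Proposition~\ref{prop:finite_image} passes over this step, citing only (MR2) and (CR).

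For perfectness you take a somewhat different route. The paper (Proposition~\ref{prop:UV-commutator-perfect}) sets $N=\langle\!\langle\rho_1\rho_3\rangle\!\rangle$ and shows $G/N$ is abelian, so $G'\le N$. It then uses Lin's explicit identity $\rho_3\rho_1=(\rho_1\rho_2)^{-1}[\rho_3\rho_1,\rho_1\rho_2](\rho_1\rho_2)$, with $\rho_3\rho_1,\rho_1\rho_2\in G'$, to get $N\le G''$. You instead pass straight to $G/G''$ and kill the image of $A_n$, since a perfect subgroup of the abelian group $G'/G''$ must be trivial. Your remaining computation (all $\rho_i$ coincide, $\sigma_{i,t}$ is independent of $i$, everything commutes) is the paper's Step~1.

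Your version is shorter and avoids a hand-checked commutator identity. The paper's gives the extra information that $G'$ is normally generated by the single element $\rho_1\rho_3$. The hypothesis $n\ge5$ also enters at different points. For you it comes through perfectness of $A_n$. For the paper it comes in Step~1, where $\rho_1=\rho_3$ must force all $\rho_i$ to coincide. That fails for $n=4$, since $S_4$ modulo its Klein four-subgroup is $S_3$. Lin's identity itself already holds for $n\ge4$.
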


The perfectness of the commutator subgroup is established in Proposition~\ref{prop:UV-commutator-perfect}, while the statement about non--abelian finite quotients is proved in Proposition~\ref{prop:finite_image} and Theorem~\ref{thm:minimal_sn}.
Thus the virtual Coxeter component imposes global constraints on finite images: non--abelian quotients are already controlled by $S_n$, and this minimality phenomenon persists under a broad class of natural quotients (Proposition~\ref{prop:quotient_stable}), including the standard virtual braid, virtual singular, virtual twin and multi--virtual braid groups (Corollary~\ref{cor:examples}).

Using the RAAG structure of $KUV_n(c)$, we obtain a complete classification of subgroup separability (LERF) and the Howson property for $UV_n(c)$ and its pure subgroup $PUV_n(c)$. We show that $UV_n(c)$ is LERF if and only if $n\le 3$ (Theorem~\ref{thm:uv_lerf}), and likewise for the Howson property (Theorem~\ref{thm:uv_howson}). These results are sharp: for $n\ge 4$, the presence of a subgroup isomorphic to $F_2\times F_2$ in $KUV_n(c)$ (Corollary~\ref{cor:kuv_small}) obstructs both properties.

We further investigate the structure of finite quotients and homomorphisms to symmetric groups.
For $n \ge 5$, we classify all homomorphisms from $UV_n(c)$ to $S_m$ with $m \le n$, up to conjugation in $S_m$ (Theorem~\ref{thm:UV_to_Sm}).
As a consequence, every surjective homomorphism $UV_n(c) \to S_n$ is, up to conjugation, one of the maps $\phi_{(\epsilon_1,\dots,\epsilon_{c+1})}$ determined by the images of the generators (Corollary~\ref{cor:surj_to_Sn}).

Finally, we establish that the subgroup $KUV_n(c)$ is characteristic in $UV_n(c)$ for all $n\ge 2$ and $c\ge 1$ (Proposition~\ref{prop:characteristic-K}), and we construct explicit finite quotients of $UV_n(c)$ whose order is strictly larger than $n!$ (Corollary~\ref{cor:finite-quotients}), providing evidence that the symmetric group is indeed the smallest non--abelian finite quotient but not the only one.

Taken together, these results show that $UV_n(c)$ is a structurally rigid and conceptually unifying object, whose algebraic behavior is governed by the interaction between its Coxeter--type virtual part and its crossing generators.

The paper is organized as follows. Section~\ref{sec:uvnc} introduces $UV_n(c)$, describes its basic properties and natural quotients (Propositions~\ref{prop:natural-quotients} and~\ref{prop:semidirect}), and establishes the finite--index right--angled Artin subgroup $KUV_n(c)$ (Theorem~\ref{thm:preskuvnc}), thereby proving Theorem~\ref{thm:A}. 
Section~\ref{sec:structural} develops structural consequences, including the virtual cohomological dimension (Theorem~\ref{thm:vcd}), the determination of the center (Proposition~\ref{prop:center-uv}), and the analysis of the lower central series (Theorem~\ref{thm:structural}). Section~\ref{sec:lerf} studies subgroup separability (Theorem~\ref{thm:uv_lerf}) and the Howson property (Theorem~\ref{thm:uv_howson}), giving a complete classification. Section~\ref{sec:finite} investigates finite quotients and rigidity phenomena: we prove that the commutator subgroup is perfect for $n\ge 5$ (Proposition~\ref{prop:UV-commutator-perfect}), show that $S_n$ is the smallest non--abelian finite quotient (Theorem~\ref{thm:minimal_sn}), thereby proving Theorem~\ref{thm:B}, classify homomorphisms to symmetric groups (Theorem~\ref{thm:UV_to_Sm}), prove that $KUV_n(c)$ is characteristic (Proposition~\ref{prop:characteristic-K}), and construct finite quotients of arbitrarily large order (Corollary~\ref{cor:finite-quotients}).

\subsection*{Acknowledgments}

The author gratefully acknowledges the support of Eliane Santos, the staff of HCA, Bruno Noronha, Luciano Macedo, M\'arcio Isabella, Andreia de Oliveira Rocha, Andreia Gracielle Santana, Ednice de Souza Santos, and SMURB--UFBA (Servi\c{c}o M\'edico Universit\'ario Rubens Brasil Soares), whose support since July 2024 was essential in enabling the completion of this work. 
The author is grateful to Anthony Genevois for pointing out a mistake in a corollary about virtual cohomological dimension in an earlier version of this manuscript posted on arXiv. 
O.~O.~was partially supported by the National Council for Scientific and Technological Development (CNPq, Brazil) through a \textit{Bolsa de Produtividade} grant No.~305422/2022--7.

\section{The universal virtual braid group}\label{sec:uvnc}

We begin by introducing the main object of study: the universal virtual braid group. This group generalizes several known families of braid-like groups while retaining a rich algebraic structure. Its presentation encodes both virtual and non-virtual crossings in a uniform way.

\subsection{Definition}

\begin{defn}\label{defn:uvnc}
Let $n\ge 2$ and $c\ge 1$. The \emph{universal virtual braid group} with $n$ strands and $c$ types of crossings, denoted by $UV_n(c)$, is the group given by the presentation with
\begin{itemize}
\item \textbf{Generators:} $\sigma_{i,t}$ and $\rho_i$, where $i=1,\dots,n-1$ and $t=1,\dots,c$;
\item \textbf{Relations:}
\begin{itemize}
  \item[(PR1)] $\rho_i\rho_{i+1}\rho_i=\rho_{i+1}\rho_i\rho_{i+1}$, for $i=1,\dots,n-2$;
  \item[(PR2)] $\rho_i\rho_j=\rho_j\rho_i$, for $|i-j|\ge 2$;
  \item[(PR3)] $\rho_i^2=1$, for $i=1,\dots,n-1$;
  \item[(CR)] $\sigma_{i,t}\sigma_{j,\ell}=\sigma_{j,\ell}\sigma_{i,t}$, for all $|i-j|\ge 2$ and all $1\le t,\ell\le c$;
  \item[(MR1)] $\sigma_{i,t}\rho_j=\rho_j\sigma_{i,t}$, for $|i-j|\ge 2$ and $1\le t\le c$;
  \item[(MR2)] $\rho_i\rho_{i+1}\sigma_{i,t}=\sigma_{i+1,t}\rho_i\rho_{i+1}$, for $i=1,\dots,n-2$ and $1\le t\le c$.
\end{itemize}
\end{itemize}
By convention, for any $c$, the group $UV_1(c)$ is trivial.
\end{defn}

The choice of generators and relations in Definition~\ref{defn:uvnc} is deliberately minimal: the relations encode only the essential interactions between the \(c\) families of crossing generators and the virtual generators, while omitting any additional constraints that would specialize to a particular virtual braid-type family. This minimality ensures that all groups of interest arise as quotients of \(UV_n(c)\) by adding the relations that characterize each specific family, as shown in Proposition~\ref{prop:natural-quotients} below.

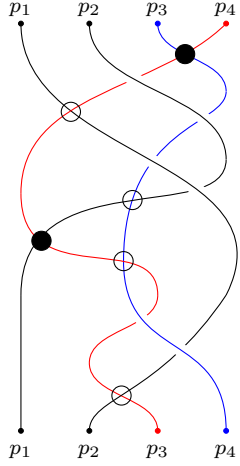
\begin{figure}[h!]
				\hfill
				\begin{tikzpicture}[scale=0.9]
					
					\draw (1,5).. controls (1,3) and (5,3) .. (4,1);
					\draw (1,1)--(1,-1);
					\draw[blue] (4,4).. controls (4,3.5) and (2.5,3.5) .. (2.5,1.5);
					\draw[red] (3,1).. controls (3,0.5) and (2,0.5).. (2,0);
					\draw[red] (2,0).. controls (2,-0.5) and (3,-0.5).. (3,-1);

					\draw (4,1).. controls (3,-0.5) and (2,-0.5) .. (2,-1);
					\draw[white,line width=5pt] (2.5,1.5).. controls (2.5,0) and (4,0.5) .. (4,-1);
					\draw[blue] (2.5,1.5).. controls (2.5,0) and (4,0.5) .. (4,-1);
					
					\draw (4,3).. controls (4,2) and (1,3).. (1,1);
					
					\draw[red] (1,2.5).. controls (1,1) and (3,2).. (3,1);
					
					\draw[white,line width=5pt] (1,5).. controls (1,3) and (5,3) .. (4,1);
					\draw (1,5).. controls (1,3) and (5,3) .. (4,1);
					\draw[white,line width=5pt] (3,5).. controls (3,4.5) and (4,4.5).. (4,4);
					\draw[blue] (3,5).. controls (3,4.5) and (4,4.5).. (4,4);
					\draw[red] (4,5).. controls (3,4) and (1,4).. (1,2.5);
					\draw[white,line width=5pt] (2,5).. controls (2,4) and (4,4).. (4,3);
					\draw (2,5).. controls (2,4) and (4,4).. (4,3);
					
					\filldraw[black] (3.4,4.55) circle (4pt);
					\filldraw[black] (1.3,1.8) circle (4pt);
					\draw (1.73,3.7) circle (4pt);
					\draw (2.63,2.4) circle (4pt);
					\draw (2.5,1.5) circle (4pt);
					\draw (2.47,-0.48) circle (4pt);
					
					
					\node at (1,5.2) {\tiny{$p_{1}$}};
					\node at (2,5.2) {\tiny{$p_{2}$}};
					\node at (3,5.2) {\tiny{$p_{3}$}};
					\node at (4,5.2) {\tiny{$p_{4}$}};
					
					\node at (1,-1.3) {\tiny{$p_{1}$}};
					\node at (2,-1.3) {\tiny{$p_{2}$}};
					\node at (3,-1.3) {\tiny{$p_{3}$}};
					\node at (4,-1.3) {\tiny{$p_{4}$}};
					
					\foreach \j in {1,2}
					{\filldraw[black] (\j,-1) circle (1pt);};
					\foreach \j in {1,2}
					{\filldraw[black] (\j,5) circle (1pt);};
			    \filldraw[blue] (3,5) circle (1pt);
			    \filldraw[blue] (4,-1) circle (1pt);
			    \filldraw[red] (4,5) circle (1pt);
			    \filldraw[red] (3,-1) circle (1pt);					
					
				\end{tikzpicture}
				\hspace*{\fill}
				\caption{A braid with 4 strands and $c=2$ types of (non-virtual) crossings. Virtual crossings are encircled by a small circle.}
			\end{figure}

There are obvious quotients of the universal virtual braid group $UV_n(c)$ like the symmetric group $S_n$, the abelianization $\mathbb{Z}^c\oplus \Z_2$, or the free abelian group $\mathbb{Z}^c$ (quotient modulo the normal closure of $v_1\in UV_n(c)$) and other virtual-braid like groups, for instance:
 \[ 
    \xymatrix@C=4.5pc{
UV_n(k) \ar@{->>}[d] \ar@{->>}[r]  & UV_n(\ell)  
  \ar@{->>}[r]
    &  UV_n(2) \ar@{->>}[d] \ar@{->>}[r] & 
  UV_n(1) \ar@{->>}[d] \ar@{->>}[rd]
   & \\
M_kVB_n &    & VSG_n \ar@{->>}[r] & VB_n & VT_n
}
 \]
where $VT_n$ is the virtual twin group \cite{BSV}, $VSG_n$ is the virtual singular braid group \cite{CPM} and $M_kVB_n$ is the multi-virtual braid group \cite{Kau2}.

\begin{prop}\label{prop:natural-quotients}
Let $n\ge 2$ and $k\ge 2$.

\begin{enumerate}
\item[(i)] The multi-virtual braid group $M_kVB_n$ is a quotient of $UV_n(k)$. 
More precisely, identifying the generators by
\[
\rho_i \mapsto \rho_i^{(0)},\qquad
\sigma_{i,t} \mapsto \rho_i^{(t)} \ (1\le t\le k-1),\qquad
\sigma_{i,k} \mapsto \sigma_i,
\]
one has
\[
M_kVB_n \cong
UV_n(k)\Big/
\Big\langle\!\Big\langle
\sigma_{i,t}\sigma_{i+1,t}\sigma_{i,t}
=
\sigma_{i+1,t}\sigma_{i,t}\sigma_{i+1,t}
\ \text{for all } t
\Big\rangle\!\Big\rangle.
\]

\item[(ii)] The virtual singular braid group $VSG_n$ is a quotient of $UV_n(2)$. 
Identifying the generators by
\[
\rho_i \mapsto \rho_i,\qquad
\sigma_{i,1} \mapsto \sigma_i,\qquad
\sigma_{i,2} \mapsto \tau_i,
\]
one obtains $VSG_n$ by imposing the defining singular braid relations between $\sigma_i$ and $\tau_i$.

\item[(iii)] The virtual twin group $VT_n$ is a quotient of $UV_n(1)$. 
More precisely,
\[
VT_n \cong
UV_n(1)\Big/
\Big\langle\!\Big\langle
\sigma_{i,1}^2
\Big\rangle\!\Big\rangle.
\]
\end{enumerate}
\end{prop}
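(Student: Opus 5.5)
The approach is the same for all three parts. We compare the standard presentation of each target group with the presentation of $UV_n(c)$ in Definition~\ref{defn:uvnc}, and check that the proposed assignment of generators sends every defining relation (PR1)--(PR3), (CR), (MR1), (MR2) to a relation that holds in the target. By von Dyck's theorem this gives a homomorphism, and it is surjective because its image contains all the generators of the target. To identify the target with the stated quotient, we then check the converse. Adding the displayed extra relations to the presentation of $UV_n(c)$ must produce exactly the standard presentation of the target, possibly after Tietze transformations that remove relations derivable from the rest. The inverse assignment is then a well-defined homomorphism, and the two maps are mutually inverse on generators.

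For (i), we recall Kauffman's presentation of $M_kVB_n$ from \cite{Kau2}. It has $k$ families of virtual generators $\rho_i^{(0)},\dots,\rho_i^{(k-1)}$ and one family of classical generators $\sigma_i$. Each virtual family satisfies the symmetric-group relations. Classical generators satisfy the braid relations, and distant generators of any two families commute. The mixed relations $\rho_i^{(s)}\rho_{i+1}^{(s)}\rho_i^{(t)}=\rho_{i+1}^{(t)}\rho_i^{(s)}\rho_{i+1}^{(s)}$ hold for $s\ne t$, and the analogous ones with $\sigma$. Under the stated identification, (PR1)--(PR3) become the Coxeter relations of $\rho^{(0)}$. (CR) and (MR1) become the far-commutations. (MR2) becomes the mixed relations in which $\rho^{(0)}$ is the detour family. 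What remains of Kauffman's presentation is the braid relation for each $t$, which is exactly the normal subgroup we quotient by, together with the relations $(\rho_i^{(t)})^2=1$ for $1\le t\le k-1$ and the mixed relations whose detour family is some $\rho^{(s)}$ with $s\ne0$.

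This leftover is the main obstacle. We must check whether the missing relations are consequences of the ones already present, or whether the quotient has to be read with the convention of \cite{Kau2} (in some formulations only $\rho^{(0)}$ serves as the detour family). We would first try to derive them using conjugation by $\rho_i\rho_{i+1}$, which by (MR2) shifts indices. If they do not follow, the isomorphism statement should be read as ``quotient by these relations together with the remaining defining relations of $M_kVB_n$''. The surjection $UV_n(k)\twoheadrightarrow M_kVB_n$ holds in either case.

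For (ii), we use the presentation of $VSG_n$ from \cite{CPM}. The generators are $\sigma_i$, $\tau_i$ and $\rho_i$, with the virtual relations, mixed relations for both $\sigma$ and $\tau$, far-commutations, and the singular relations: the braid relation for $\sigma$, $\sigma_i\tau_i=\tau_i\sigma_i$, $\sigma_i\sigma_{i+1}\tau_i=\tau_{i+1}\sigma_i\sigma_{i+1}$, and so on. The relations of $UV_n(2)$ are precisely the virtual, mixed and far-commutation relations, so the quotient by the singular relations is $VSG_n$.

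For (iii), the presentation of $VT_n$ from \cite{BSV} has the $\rho$-Coxeter relations, $s_i^2=1$, $s_is_j=s_js_i$ for $|i-j|\ge2$, the mixed far-commutation, and $\rho_i\rho_{i+1}s_i\rho_{i+1}\rho_i=s_{i+1}$. These match (PR1)--(PR3), (CR), (MR1) and (MR2) under $\sigma_{i,1}\mapsto s_i$, once $\sigma_{i,1}^2=1$ is added. This gives the stated isomorphism directly.
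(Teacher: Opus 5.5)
Your approach matches the paper's. Its proof only says that the relations of $UV_n(c)$ hold under the stated identifications and that the remaining defining relations of $M_kVB_n$, $VSG_n$ and $VT_n$ are imposed by normal closure. Your treatment of (ii) and (iii) is correct and follows exactly this route.

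The point you left open in (i) should be settled, and it goes against the displayed formula. The leftover relations do not follow, so trying to derive them by conjugating with $\rho_i\rho_{i+1}$ cannot succeed.

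\begin{itemize}
\item Fix $1\le t\le k-1$. The exponent-sum map $e_t\colon UV_n(k)\to\mathbb{Z}$, with $e_t(\sigma_{i,t})=1$ and every other generator sent to $0$, is a homomorphism. Each relation (PR1)--(MR2) has equal $\sigma_{\cdot,t}$-exponent sum on both sides.
\item $e_t$ also kills every braid relation. It therefore factors through $\overline G=UV_n(k)/\langle\!\langle \text{braid relations}\rangle\!\rangle$.
\item Since $e_t(\sigma_{1,t}^2)=2\neq 0$, the element $\sigma_{1,t}^2$ is non-trivial in $\overline G$. Its image $(\rho_1^{(t)})^2$ is trivial in $M_kVB_n$, so the natural map $\overline G\twoheadrightarrow M_kVB_n$ is not injective.
\item The two groups are not even abstractly isomorphic. $\overline G^{ab}\cong\mathbb{Z}^k\oplus\mathbb{Z}_2$ has free rank $k\ge 2$. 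Since the $\rho^{(t)}$ are involutions and all relations are homogeneous in $\sigma$, $(M_kVB_n)^{ab}$ has free rank $1$.
\end{itemize}

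Hence the isomorphism displayed in (i) is false as written. The surjection $UV_n(k)\twoheadrightarrow M_kVB_n$ stands. What holds is your fallback version: $M_kVB_n$ is $UV_n(k)$ modulo the normal closure of three families of relations.
\begin{itemize}
\item the braid relations;
\item $\sigma_{i,t}^2$ for $1\le t\le k-1$;
\item the mixed relations in which some $\rho^{(s)}$ with $s\neq 0$ is the detour family, as far as these belong to Kauffman's presentation.
\end{itemize}
This is also all that the paper's proof establishes, through its phrase ``the additional relations required in the definition of $M_kVB_n$ \ldots\ are imposed by taking the normal closure''. The displayed quotient in the statement omits these relations.
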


\begin{proof}
In each case the defining relations of $UV_n(c)$ are satisfied under the indicated identification of generators. The additional relations required in the definition of $M_kVB_n$ (see \cite[Section~2]{Kau2}), $VSG_n$ (see \cite{CPM}) and $VT_n$ (see \cite{BSV}) are imposed by taking the normal closure of these relations in $UV_n(c)$.
\end{proof}

\begin{rem}
Except for the cases $n=2$ or $c=1$, the group $UV_n(c)$ is not a virtual Artin group in the sense of \cite{BPT}.
\end{rem}

\subsection{Natural quotients and maps to $S_n$}\label{subsec:quotients-maps}

Let $S_n$ be the symmetric group with generators $s_i=(i\ i+1)$, $i=1,\dots,n-1$, and the usual Coxeter relations. 
There are two natural surjective homomorphisms from $UV_n(c)$ onto $S_n$:
\[
\pi_n^P\colon UV_n(c)\to S_n,
\qquad
\pi_n^K\colon UV_n(c)\to S_n,
\]
defined by
\[
\pi_n^P(\sigma_{i,t})=s_i,\ \ \pi_n^P(\rho_i)=s_i,
\qquad
\pi_n^K(\sigma_{i,t})=1,\ \ \pi_n^K(\rho_i)=s_i,
\]
for all $i=1,\dots,n-1$ and $t=1,\dots,c$.

\begin{defn}\label{defn:puvkuv}
Let
\[
PUV_n(c)=\ker{\pi_n^P}
\quad\text{and}\quad
KUV_n(c)=\ker{\pi_n^K}.
\]
We call $PUV_n(c)$ the \emph{pure universal virtual braid group}, and $KUV_n(c)$ the \emph{kernel universal virtual braid group}.
\end{defn}

The virtual generators $\rho_i$ satisfy the defining relations of the symmetric group. This observation leads to two natural surjective homomorphisms onto $S_n$, each of which splits. As a consequence, $UV_n(c)$ admits semidirect product decompositions that reveal its underlying structure.

\begin{prop}\label{prop:semidirect}
For all $n\ge 2$ and $c\ge 1$, the group $UV_n(c)$ admits decompositions as semidirect products
\[
UV_n(c)=PUV_n(c)\rtimes S_n
\quad\text{and}\quad
UV_n(c)=KUV_n(c)\rtimes S_n.
\]
\end{prop}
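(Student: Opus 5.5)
The plan is the standard splitting argument for a surjection with a section. Let $\iota\colon S_n\to UV_n(c)$ be defined on generators by $\iota(s_i)=\rho_i$. First I will check that $\iota$ is a well-defined homomorphism. The Coxeter presentation of $S_n$ has relations $s_i^2=1$, $s_is_{i+1}s_i=s_{i+1}s_is_{i+1}$ and $s_is_j=s_js_i$ for $|i-j|\ge 2$. Their images under $\iota$ are exactly the relations (PR3), (PR1) and (PR2) of Definition~\ref{defn:uvnc}, so $\iota$ is well defined.

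Next I will check that $\pi_n^P$ and $\pi_n^K$ are well-defined surjections. For this I verify each defining relation of $UV_n(c)$ in $S_n$:
\begin{itemize}
\item (PR1)--(PR3) map to the Coxeter relations under both maps.
\item For $\pi_n^P$, (CR) and (MR1) become commutation of $s_i,s_j$ with $|i-j|\ge2$.
\item For $\pi_n^P$, (MR2) becomes $s_is_{i+1}s_i=s_{i+1}s_is_{i+1}$ after cancellation, since $s_is_{i+1}s_i(s_is_{i+1})^{-1}=s_{i+1}$ is the braid relation rewritten.
\item For $\pi_n^K$, (CR) and (MR1) are trivial, and (MR2) reads $s_is_{i+1}=s_is_{i+1}$.
\end{itemize}
Both maps are surjective, because the $s_i$ lie in their images.

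The key step is that $\pi_n^P\circ\iota=\mathrm{id}_{S_n}=\pi_n^K\circ\iota$. This holds because both maps send $\rho_i\mapsto s_i$, and it suffices to check it on generators. Hence each short exact sequence $1\to \ker{\pi}\to UV_n(c)\to S_n\to 1$ splits, with section $\iota$.

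Finally I will conclude by the standard recognition of split extensions. Put $H=\iota(S_n)=\langle\rho_1,\dots,\rho_{n-1}\rangle$ and $N=\ker{\pi}$. Every $g\in UV_n(c)$ can be written $g=\bigl(g\,\iota(\pi(g))^{-1}\bigr)\iota(\pi(g))\in NH$. Also $N\cap H=1$, because $\pi|_H$ is injective: $\iota$ composed with $\pi$ is the identity. Since $N$ is normal, $UV_n(c)=N\rtimes H\cong N\rtimes S_n$.

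There is no real obstacle here. The only point needing care is the relation check for (MR2) under $\pi_n^P$, which reduces to the braid relation in $S_n$.
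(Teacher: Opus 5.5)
Your proposal is correct and follows essentially the same route as the paper: define the section $\iota(s_i)=\rho_i$, use (PR1)--(PR3) for well-definedness, observe $\pi_n^P\circ\iota=\pi_n^K\circ\iota=\mathrm{id}_{S_n}$, and conclude that both extensions split. The paper leaves three points implicit that you spell out: the relation checks showing $\pi_n^P$ and $\pi_n^K$ are well defined, the internal semidirect-product decomposition, and the fact that injectivity of $\iota$ comes from $\pi\circ\iota=\mathrm{id}$ rather than from the relations alone.
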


\begin{proof}
Define a map $\iota\colon S_n\to UV_n(c)$ by $\iota(s_i)=\rho_i$. Relations (PR1)--(PR3) show that $\iota$ is a well-defined injective homomorphism.
Moreover, $\pi_n^P\circ \iota=\pi_n^K\circ \iota=\mathrm{id}_{S_n}$. Therefore each short exact sequence
\[
1\to \ker{\pi}\to UV_n(c)\stackrel{\pi}{\to} S_n\to 1
\]
splits, for $\pi=\pi_n^P,\pi_n^K$, and the result follows.
\end{proof}

\subsection{Inheritance of the semidirect structure}\label{subsec:inheritance}

The semidirect product decompositions of $UV_n(c)$ are inherited by quotients that preserve the virtual generators.

\begin{prop}\label{prop:inheritance-semidirect}
Let $Q=UV_n(c)/N$ be a quotient such that the canonical image of the subgroup
$\langle\rho_1,\dots,\rho_{n-1}\rangle$ in $Q$ is isomorphic to $S_n$
(equivalently, $N\cap\langle\rho_1,\dots,\rho_{n-1}\rangle=\{1\}$).
Let $\pi\colon UV_n(c)\twoheadrightarrow Q$ be the quotient map, and denote
$\bar\rho_i=\pi(\rho_i)$.

Then the homomorphism $\bar\pi_n^K\colon Q\to S_n$ induced by $\pi_n^K$
(i.e., $\bar\pi_n^K(\bar\rho_i)=s_i$ and $\bar\pi_n^K(\pi(\sigma_{i,t}))=1$)
is surjective and splits via $\bar\iota\colon S_n\to Q$, $\bar\iota(s_i)=\bar\rho_i$.
Consequently,
\[
Q\cong \operatorname{Ker}(\bar\pi_n^K)\rtimes S_n,
\]
where $\operatorname{Ker}(\bar\pi_n^K)=\pi(KUV_n(c))$.
\end{prop}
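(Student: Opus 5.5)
The plan is to apply the standard splitting lemma for short exact sequences, as in the proof of \reprop{semidirect}, after first checking that all the maps involved are well defined on $Q$.

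\textbf{Step 1: $\bar\pi_n^K$ is well defined.} For $\pi_n^K$ to induce a homomorphism on $Q=UV_n(c)/N$ we need $N\subseteq\ker{\pi_n^K}=KUV_n(c)$. I read this as part of the hypothesis: the statement speaks of the homomorphism ``induced by $\pi_n^K$''. It does not follow from $N\cap\langle\rho_1,\dots,\rho_{n-1}\rangle=\{1\}$ alone. I would add a remark that it holds for all quotients in \reprop{natural-quotients}. There the added relators are words in the $\sigma_{i,t}$ only (braid relations, singular braid relations, $\sigma_{i,1}^2$), and every $\sigma_{i,t}$ maps to $1$ under $\pi_n^K$. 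With $N\subseteq KUV_n(c)$, the homomorphism $\pi_n^K$ factors uniquely as $\bar\pi_n^K\circ\pi$. Its values on generators are as stated: $\bar\pi_n^K(\bar\rho_i)=s_i$ and $\bar\pi_n^K(\pi(\sigma_{i,t}))=1$. Surjectivity is immediate, since the $s_i$ generate $S_n$.

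\textbf{Step 2: the section $\bar\iota$.} The elements $\bar\rho_i$ satisfy the Coxeter relations of $S_n$, because they are images of elements satisfying (PR1)--(PR3). Hence $\bar\iota(s_i)=\bar\rho_i$ defines a homomorphism $S_n\to Q$; equivalently, $\bar\iota=\pi\circ\iota$ with $\iota$ as in the proof of \reprop{semidirect}. Checking on generators gives $\bar\pi_n^K\circ\bar\iota=\mathrm{id}_{S_n}$. This forces $\bar\iota$ to be injective, which recovers the hypothesis that $\langle\bar\rho_1,\dots,\bar\rho_{n-1}\rangle\cong S_n$. The splitting lemma then gives $Q\cong\operatorname{Ker}(\bar\pi_n^K)\rtimes S_n$, with $S_n$ acting on the kernel by conjugation through $\bar\iota$.

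\textbf{Step 3: identifying the kernel.} If $g\in KUV_n(c)$, then $\bar\pi_n^K(\pi(g))=\pi_n^K(g)=1$, so $\pi(KUV_n(c))\subseteq\operatorname{Ker}(\bar\pi_n^K)$. Conversely, every element of $Q$ has the form $\pi(g)$ by surjectivity of $\pi$. If $\pi(g)\in\operatorname{Ker}(\bar\pi_n^K)$, then $\pi_n^K(g)=\bar\pi_n^K(\pi(g))=1$, so $g\in KUV_n(c)$. Thus the two subgroups coincide.

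The only real subtlety is Step 1: making explicit that $N\subseteq KUV_n(c)$ is what is needed for $\bar\pi_n^K$ to exist. The remaining steps are routine verification on generators.
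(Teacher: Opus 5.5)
Your argument is the same splitting-lemma argument as the paper's proof. You take the section $\bar\iota=\pi\circ\iota$, check $\bar\pi_n^K\circ\bar\iota=\mathrm{id}_{S_n}$, and also spell out in Step 3 the kernel identification $\operatorname{Ker}(\bar\pi_n^K)=\pi(KUV_n(c))$, which the paper only asserts.

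Your Step 1 caveat is correct and fills a gap that the paper's proof passes over. The hypothesis $N\cap\langle\rho_1,\dots,\rho_{n-1}\rangle=\{1\}$ does not give $N\subseteq KUV_n(c)$. For example, $N=PUV_n(c)$ meets $\iota(S_n)$ trivially but contains $\sigma_{1,1}\rho_1$, so $\pi_n^K$ does not factor through $Q$. Once $N\subseteq KUV_n(c)$ is assumed, the stated intersection hypothesis holds automatically, as you note.
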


\begin{proof}
Because $N$ does not intersect the virtual subgroup, the restriction $\pi|_{\langle\rho_i\rangle}$ is injective; hence the elements $\bar\rho_i$ satisfy the Coxeter relations (PR1)--(PR3) and generate a subgroup isomorphic to $S_n$. The map $\bar\iota\colon s_i\mapsto\bar\rho_i$ is therefore an injective homomorphism. By definition, $\bar\pi_n^K\circ\bar\iota=\operatorname{id}_{S_n}$, so $\bar\iota$ is a section for $\bar\pi_n^K$. The splitting of the resulting short exact sequence yields the claimed semidirect product decomposition.
\end{proof}

This structural uniformity is the reason why many algebraic and geometric properties deduced from the semidirect decomposition of $UV_n(c)$ are automatically transferred to all quotients that retain an intact copy of $S_n$.

\subsection{A finite index right--angled Artin subgroup}\label{subsec:raag}

A key insight in understanding the structure of $UV_n(c)$ is the existence of a right-angled Artin subgroup of finite index. In this subsection we construct such a subgroup explicitly by considering the kernel of the homomorphism that kills all non-virtual crossings. Throughout, we keep the notation of \redefn{uvnc} and \redefn{puvkuv}. Recall that the homomorphism $\pi_n^K\colon UV_n(c)\to S_n$ satisfies $\pi_n^K(\rho_i)=s_i$ and $\pi_n^K(\sigma_{i,t})=1$, hence $KUV_n(c)=\ker{\pi_n^K}$.

\begin{defn}\label{defn:anc}
Let $U_n(c)$ denote the group with generators $\sigma_{i,t}$,
$i=1,\dots,n-1$, $t=1,\dots,c$, and defining relations
\[
\sigma_{i,t}\sigma_{j,\ell}=\sigma_{j,\ell}\sigma_{i,t}
\qquad\text{for}\qquad |i-j|\ge 2,\ \ 1\le t,\ell\le c.
\]
Equivalently, $U_n(c)$ is the right--angled Artin group associated to the graph whose vertex set is $\{\sigma_{i,t}\}$ and where two vertices are adjacent if and only if the corresponding generators commute.
\end{defn}

\begin{rem}\label{rem:anc_in_uv}
The defining relations of $U_n(c)$ are precisely the relations (CR) in \redefn{uvnc}. Therefore the assignment $\sigma_{i,t}\mapsto\sigma_{i,t}$ induces an injective homomorphism $U_n(c)\hookrightarrow UV_n(c)$, and we identify $U_n(c)$ with its image. Moreover, by definition of $\pi_n^K$, one has $U_n(c)\le KUV_n(c)$.
\end{rem}

\subsubsection*{Conjugates of the non-virtual generators}

For $1\le i\neq j\le n$ and $1\le t\le c$, we define elements $\delta_{i,j,t}\in KUV_n(c)$ as follows. If $j=i+1$, set
\[
\delta_{i,i+1,t}=\sigma_{i,t}.
\]
If $j> i+1$, set
\[
\delta_{i,j,t}
=
\rho_{j-1}\rho_{j-2}\cdots \rho_{i+1}\,
\sigma_{i,t}\,
\rho_{i+1}\cdots \rho_{j-2}\rho_{j-1},
\]
and
\[
\delta_{j,i,t}
=
\rho_{j-1}\rho_{j-2}\cdots \rho_{i+1}\,
\rho_i\,\sigma_{i,t}\,\rho_i\,
\rho_{i+1}\cdots \rho_{j-2}\rho_{j-1}.
\]
By construction, $\delta_{i,j,t}\in KUV_n(c)$ for all admissible indices.

\begin{lem}\label{lem:actionkuvnc}
Let $w\in S_n$ and let $1\le i\neq j\le n$ and $1\le t\le c$. Under the conjugation action of $\iota(S_n)\le UV_n(c)$ (cf. Proposition~\ref{prop:semidirect}), one has
\[
\iota(w)\,\delta_{i,j,t}\,\iota(w)^{-1}=\delta_{w(i),\,w(j),\,t}.
\]
\end{lem}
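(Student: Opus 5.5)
Since $S_n$ is generated by $s_1,\dots,s_{n-1}$ and both sides of the claimed identity define an action (the left side because conjugation by $\iota$ is a homomorphism, the right side because $w\mapsto(\delta_{i,j,t}\mapsto\delta_{w(i),w(j),t})$ is the natural action on ordered pairs), it suffices to prove the formula for $w=s_k$, i.e.
\[
\rho_k\,\delta_{i,j,t}\,\rho_k=\delta_{s_k(i),\,s_k(j),\,t}
\qquad\text{for all }k,\ i\neq j,\ t.
\]
Indeed, if it holds for $w_1,w_2$ then $\iota(w_1w_2)\delta_{i,j,t}\iota(w_1w_2)^{-1}=\iota(w_1)\delta_{w_2(i),w_2(j),t}\iota(w_1)^{-1}=\delta_{w_1w_2(i),w_1w_2(j),t}$.

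To organize the verification, I first rewrite the definition uniformly. Using (PR1)--(PR3) and (MR2), I express each $\delta_{i,j,t}$ as $\iota(u)\sigma_{1,t}\iota(u)^{-1}$ for some $u\in S_n$ with $u(1)=i$, $u(2)=j$. Repeated use of (MR2) in the form $\sigma_{i+1,t}=(\rho_i\rho_{i+1})\sigma_{i,t}(\rho_i\rho_{i+1})^{-1}$ gives $\sigma_{i,t}=\iota(g_i)\sigma_{1,t}\iota(g_i)^{-1}$ with $g_i(1)=i$, $g_i(2)=i+1$. The prefixes $\rho_{j-1}\cdots\rho_{i+1}$, and additionally $\rho_i$ for $\delta_{j,i,t}$, then move $(i,i+1)$ to $(i,j)$, respectively to $(j,i)$. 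Once every $\delta_{i,j,t}=\iota(u)\sigma_{1,t}\iota(u)^{-1}$ with $u(1)=i$, $u(2)=j$, the lemma reduces to a single key claim: \emph{$\iota(v)\sigma_{1,t}\iota(v)^{-1}$ depends only on $(v(1),v(2))$}. Then $\iota(w)\delta_{i,j,t}\iota(w)^{-1}=\iota(wu)\sigma_{1,t}\iota(wu)^{-1}$ and $(wu(1),wu(2))=(w(i),w(j))$.

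The key claim amounts to showing that the stabilizer $H=\{v\in S_n: v(1)=1,\ v(2)=2\}=\langle s_3,\dots,s_{n-1}\rangle$ centralizes $\sigma_{1,t}$ under $\iota$. This follows at once from (MR1), since $\rho_k$ commutes with $\sigma_{1,t}$ for $k\ge3$. If $u(1)=u'(1)$ and $u(2)=u'(2)$, then $u^{-1}u'\in H$, so the two conjugates agree.

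The main obstacle is the bookkeeping in the rewriting step: checking that the explicit products of $\rho$'s in the definition of $\delta_{i,j,t}$, composed with the $g_i$ coming from (MR2), really send $1\mapsto i$ and $2\mapsto j$, with the correct order for $\delta_{j,i,t}$. Here the extra $\rho_i$ swaps $i$ and $i+1$ before the prefix moves $i+1$ to $j$. I would check this by tracking the permutation $\pi_n^K$-style, computing images of $1$ and $2$ step by step; it is routine once the convention that $\iota(w)$ acts on indices by $w$ is fixed.
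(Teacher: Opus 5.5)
Your proof is correct, but it is organized differently from the paper's. The paper checks the identity only for $w=s_k$, by sliding $\rho_k$ through the word defining $\delta_{i,j,t}$ with (PR1)--(PR3) and (MR1)--(MR2), and then extends to arbitrary $w$ by composition. This is left as a sketch. Carrying it out requires a case analysis on the position of $k,k+1$ relative to $i,j$, including moving $\rho_k$ through the $\rho$-prefixes with (PR1)--(PR2).

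You give an orbit--stabilizer argument instead. Relation (MR2) and the explicit prefixes show that $\delta_{i,j,t}=\iota(u)\sigma_{1,t}\iota(u)^{-1}$ for some $u$ with $(u(1),u(2))=(i,j)$. Relation (MR1) shows that $\iota$ of the pointwise stabilizer $\langle s_3,\dots,s_{n-1}\rangle$ of $\{1,2\}$ centralizes $\sigma_{1,t}$, so this conjugate depends only on $(u(1),u(2))$.

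The bookkeeping you flag works out with right-to-left composition:
\begin{itemize}
\item $(s_{i-1}s_i)\cdots(s_1s_2)$ sends $(1,2)\mapsto(i,i+1)$;
\item $s_{j-1}\cdots s_{i+1}$ sends $(i,i+1)\mapsto(i,j)$;
\item $s_{j-1}\cdots s_{i+1}s_i$ sends $(i,i+1)\mapsto(j,i)$.
\end{itemize}
For $\delta_{i+1,i,t}$ you must read the paper's definition with an empty prefix, i.e.\ $\delta_{i+1,i,t}=\rho_i\sigma_{i,t}\rho_i$; the paper leaves this implicit. Once your key claim is proved, the identity holds for every $w$ directly, so your opening reduction to $w=s_k$ is superfluous.

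Your route avoids all case distinctions and isolates exactly which relations matter: only (MR1) for the stabilizer, and (MR2) to reach $\sigma_{i,t}$ from $\sigma_{1,t}$. As a byproduct, it shows that the $\delta_{i,j,t}$ are exactly the $\iota(S_n)$-conjugates of the $\sigma_{k,t}$. That is the generating-set claim used in the proof of Theorem~\ref{thm:preskuvnc}. The paper's generator-by-generator check needs no base point or stabilizer description, but it only becomes a proof once the cases are written out.
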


\begin{proof}
It is enough to check the claim for $w=s_k$. Using (PR1)--(PR3), the element $\rho_k$ acts on the indices by the adjacent transposition $(k\ k+1)$, and the relations (MR1)--(MR2) allow one to slide $\rho_k$ past $\sigma_{i,t}$, producing exactly the reindexing prescribed by $(k\ k+1)$. The general case follows by writing $w$ as a word in the $s_k$'s.
\end{proof}

We recall that a \emph{right-angled Artin group} (RAAG) is a group defined by a presentation where the only relations are commutations between certain generators.  More concretely, given a finite graph $\Gamma$ with vertex set $V$, the RAAG associated to $\Gamma$ is 
\[
A(\Gamma) = \langle V \mid [v,w] = 1 \text{ for all } \{v,w\} \in E(\Gamma) \rangle.
\]
When the graph has no edges, $A(\Gamma)$ is a free group; when the graph is complete, $A(\Gamma)$ is a free abelian group.  Right-angled Artin groups are central objects in geometric group theory due to their rich subgroup structure and their role in the study of cubulated groups; see, for instance, \cite{Davis2008}.

We now turn to a detailed study of $KUV_n(c)$, the kernel of $\pi_n^{K}$.  By examining the conjugation action of the symmetric group on the non-virtual generators, we obtain a generating set whose commutation relations are controlled by disjointness of indices. This leads to the following presentation.

\begin{thm}\label{thm:preskuvnc}
Let $n\ge 2$ and $c\ge 1$. The group $KUV_n(c)$ is a right--angled Artin group. More precisely, it admits a presentation with generating set
\[
\{\delta_{i,j,t}\mid 1\le i\neq j\le n,\ 1\le t\le c\}
\]
and defining relations
\[
\delta_{i,j,t}\,\delta_{k,\ell,m}=\delta_{k,\ell,m}\,\delta_{i,j,t},
\]
whenever the indices $i,j,k,\ell$ are pairwise distinct and
$1\le t,m\le c$.
\end{thm}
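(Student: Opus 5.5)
The plan is to apply the Reidemeister--Schreier method to the split extension $1\to KUV_n(c)\to UV_n(c)\to S_n\to 1$ of Proposition~\ref{prop:semidirect}. The Schreier transversal will be $\iota(S_n)$, i.e.\ reduced words in the $\rho_i$. Because $\pi_n^K$ is trivial on the $\sigma_{i,t}$ and the section $\iota$ is a homomorphism, the Schreier generators coming from the letters $\rho_k$ are all trivial. The nontrivial generators are therefore the conjugates $\iota(w)\sigma_{i,t}\iota(w)^{-1}$, for $w\in S_n$, $1\le i\le n-1$ and $1\le t\le c$.

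Equivalently, and more cleanly, I would use the standard fact that if $G=K\rtimes H$ and $G=\langle X\cup H\mid R_H, R\rangle$ with $X\subseteq K$, then $K$ is presented by the generators $x^h$ ($x\in X$, $h\in H$) subject to the $H$-conjugates of the relations $R$, rewritten in these generators. The first step is to identify the generators. By Lemma~\ref{lem:actionkuvnc}, $\iota(w)\sigma_{i,t}\iota(w)^{-1}=\iota(w)\delta_{i,i+1,t}\iota(w)^{-1}=\delta_{w(i),w(i+1),t}$. Hence every Schreier generator equals some $\delta_{k,\ell,t}$ with $k\neq\ell$, and all ordered pairs $(k,\ell)$ occur.

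The relations (MR1) and (MR2) will be used exactly to say that distinct transversal elements $w,w'$ with $(w(i),w(i+1))=(w'(j),w'(j+1))$ produce the same generator. Equivalently, (MR1) and (MR2) say that the stabilizer in $S_n$ of the pair $(i,i+1)$ centralizes $\sigma_{i,t}$, and that $\rho_i\rho_{i+1}$ conjugates $\sigma_{i,t}$ to $\sigma_{i+1,t}$. To make this precise, I would rewrite $UV_n(c)$ as the quotient of the free product $F(\{\delta_{k,\ell,t}\})\rtimes S_n$, with $S_n$ permuting indices, by the relations $\iota(s_i)=\rho_i$, $\delta_{i,i+1,t}=\sigma_{i,t}$ and (CR). To do so, I would verify that (MR1), (MR2) and the defining formulas for $\delta$ are precisely what make this action well defined. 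In this way the (MR) relations become trivial after rewriting, and only the reindexing identifications survive.

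The second step is to rewrite the relations. Conjugating (CR), $[\sigma_{i,t},\sigma_{j,\ell}]=1$ with $|i-j|\ge2$, by $\iota(w)$ gives $[\delta_{w(i),w(i+1),t},\delta_{w(j),w(j+1),\ell}]=1$. Since $\{i,i+1\}\cap\{j,j+1\}=\emptyset$, and $S_n$ acts transitively on ordered $4$-tuples of distinct indices, these are exactly the commutations among $\delta$'s with four pairwise distinct indices. The (PR) relations belong to $H$ and contribute nothing. This yields the presentation in the statement, which consists only of commutators of generators, so $KUV_n(c)$ is a RAAG on the graph with vertices $\delta_{i,j,t}$ and edges joining disjoint index pairs.

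The main obstacle is the bookkeeping showing that (MR1) and (MR2) give exactly the identifications $\iota(w)\sigma_{i,t}\iota(w)^{-1}=\iota(w')\sigma_{j,t}\iota(w')^{-1}$ whenever the ordered pairs agree, and no further relations. I would handle this by checking that the stabilizer of the ordered pair $(1,2)$ in $S_n$ is generated by $\rho_3,\dots,\rho_{n-1}$, which centralize $\sigma_{1,t}$ by (MR1). Together with the transport by (MR2) from $\sigma_{1,t}$ to each $\sigma_{i,t}$, this means the $H$-orbit of $\sigma_{1,t}$ is in bijection with ordered pairs. Hence the generating set is exactly $\{\delta_{i,j,t}\}$ and has no hidden coincidences.
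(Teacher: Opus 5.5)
Your proposal is correct, and it takes a different and more complete route than the paper. The paper gets the generating set from the fact that $KUV_n(c)$ is the normal closure of $U_n(c)$. It checks the disjoint-index commutations by conjugating back to (CR) via Lemma~\ref{lem:actionkuvnc}. It then simply asserts that there are no further relations. Its ``conversely'' step only argues, informally, that generators with overlapping index pairs do not commute, which does not show that the listed commutations are \emph{defining} relations.

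Your Reidemeister--Schreier argument, in the form of the presentation lemma for split extensions, supplies exactly this missing completeness. Write $x_{(i,t),h}=\iota(h)\sigma_{i,t}\iota(h)^{-1}$ for the Schreier generators. Then (PR) contributes nothing, and the $\iota(S_n)$-conjugates of (MR1) and (MR2) rewrite to the identifications $x_{(i,t),hs_j}=x_{(i,t),h}$ and $x_{(i,t),hs_is_{i+1}}=x_{(i+1,t),h}$. The conjugates of (CR) give precisely the commutations of disjoint ordered pairs.

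One point should be stated explicitly. Your stabilizer computation (the stabilizer of $(1,2)$ is $\langle s_3,\dots,s_{n-1}\rangle$, plus transport by (MR2)) only shows that Schreier generators with the same label $(h(i),h(i+1),t)$ get identified. Your ``bijection'' and ``no hidden coincidences'' also need the converse: generators with distinct labels are never identified. That follows because each rewritten (MR) relation visibly preserves the label. Equivalently, $\rho_i\mapsto s_i$, $\sigma_{i,t}\mapsto\delta_{i,i+1,t}$ defines a homomorphism $UV_n(c)\to A(\Gamma_{n,c})\rtimes S_n$.

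With that added, your approach actually gives $UV_n(c)\cong A(\Gamma_{n,c})\rtimes S_n$. Lemma~\ref{lem:actionkuvnc} and the distinctness of the $\delta_{i,j,t}$ then come out as byproducts, at the cost of the bookkeeping the paper skips.

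Two minor points. $F(\{\delta_{k,\ell,t}\})\rtimes S_n$ is a semidirect product, not a free product. Also, (MR1) alone only gives $\langle s_j : |i-j|\ge 2\rangle$, which is strictly smaller than the stabilizer of $(i,i+1)$ when $2\le i\le n-2$, so your reduction to $i=1$ is genuinely needed.
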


\begin{proof}
By \rerem{anc_in_uv}, the subgroup $U_n(c)$ lies in $KUV_n(c)$, and $KUV_n(c)$ is the normal closure of $U_n(c)$ in $UV_n(c)$ (since $\pi_n^K$ kills all $\sigma_{i,t}$ and maps $\rho_i$ onto $S_n$).
Hence $KUV_n(c)$ is generated by all conjugates of the $\sigma_{i,t}$ by elements of $\iota(S_n)$, which are precisely the elements $\delta_{i,j,t}$ defined above. This proves the generating set.

Now assume that $i,j,k,\ell$ are pairwise distinct. Using \relem{actionkuvnc}, we may conjugate so as to reduce to the case where $\delta_{i,j,t}=\sigma_{p,t}$ and $\delta_{k,\ell,m}=\sigma_{q,m}$ with $|p-q|\ge 2$. Then the commutation relation follows from (CR). 
Conversely, if two generators $\delta_{i,j,t}$ and $\delta_{k,\ell,m}$ do not satisfy the pairwise distinct condition, then the sets $\{i,j\}$ and $\{k,\ell\}$ intersect. In this case, no relation in the presentation of $UV_n(c)$ forces these generators to commute; indeed, such a commutation would contradict the action of $S_n$ (Lemma~\ref{lem:actionkuvnc}) or the mixed relations (MR1)-(MR2). Hence no commutation relation is imposed. 
Therefore, the only relations among the generators $\delta_{i,j,t}$ are the commutation relations $\delta_{i,j,t}\delta_{k,\ell,m}=\delta_{k,\ell,m}\delta_{i,j,t}$ precisely when $\{i,j\}$ and $\{k,\ell\}$ are disjoint. This is exactly the defining presentation of a right-angled Artin group. 
\end{proof}

The fact that \(KUV_n(c)\) is a right-angled Artin group is a direct consequence of the minimality  of the relations defining \(UV_n(c)\): the only constraints among the crossing generators  \(\sigma_{i,t}\) are commutation relations between generators that involve disjoint pairs of strands,  and the semidirect product structure ensures that the subgroup generated by the conjugates of the  \(\sigma_{i,t}\) under the virtual generators is free abelian in each block. This RAAG structure is precisely what allows many algebraic and geometric properties to be established in a unified manner for \(UV_n(c)\) and subsequently transferred to its quotients that preserve the virtual generators, as we will see in Sections~\ref{sec:structural} and~\ref{sec:finite}.

\begin{cor}\label{cor:kuv_small}
Let $c\ge 1$.
\begin{enumerate}
\item $KUV_2(c)$ is a free group of rank $2c$.
\item $KUV_3(c)$ is a free group of rank $6c$.
\item Let $n \ge 4$ and $c \ge 1$. 
Then the right-angled Artin group $KUV_n(c)$ contains a subgroup isomorphic to $F_2 \times F_2$, where $F_2$ denotes the free group of rank $2$.
\end{enumerate}
\end{cor}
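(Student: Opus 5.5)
The plan is to read everything off the presentation in Theorem~\ref{thm:preskuvnc}. That theorem says $KUV_n(c)=A(\Gamma_n)$, where $\Gamma_n$ is the graph with vertex set $\{\delta_{i,j,t}\mid 1\le i\neq j\le n,\ 1\le t\le c\}$. Two vertices $\delta_{i,j,t}$ and $\delta_{k,\ell,m}$ are joined by an edge exactly when $\{i,j\}\cap\{k,\ell\}=\emptyset$. The number of vertices is the number of ordered pairs $(i,j)$ with $i\neq j$, namely $n(n-1)$, times the number $c$ of crossing types. So $\Gamma_n$ has $n(n-1)c$ vertices, and the three parts come down to counting vertices and locating edges.

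For parts (1) and (2) there is only one point to check. When $n\le 3$, any two pairs $\{i,j\}$ and $\{k,\ell\}$ in $\{1,\dots,n\}$ must intersect, since two disjoint pairs need four distinct indices. Hence $\Gamma_n$ has no edges, and $A(\Gamma_n)$ is free on its vertex set. The rank is then $2\cdot 1\cdot c=2c$ for $n=2$ and $3\cdot 2\cdot c=6c$ for $n=3$. I will also note that a RAAG on a graph with no edges is free of rank equal to the number of vertices. This is standard, since the presentation has no relators.

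For part (3), with $n\ge 4$, I fix $t=1$ and take the four vertices $a_1=\delta_{1,2,1}$, $a_2=\delta_{2,1,1}$, $b_1=\delta_{3,4,1}$ and $b_2=\delta_{4,3,1}$. Each $a_p$ uses indices in $\{1,2\}$ and each $b_q$ uses indices in $\{3,4\}$, so every $a_p$ is adjacent to every $b_q$. On the other hand, $a_1$ and $a_2$ share indices, so they are not adjacent, and likewise $b_1$ and $b_2$. The full subgraph on these four vertices is therefore the complete bipartite graph $K_{2,2}$, which is the join of two edgeless graphs on two vertices. I then invoke the standard fact that for a full subgraph $\Lambda\subseteq\Gamma$, the vertices of $\Lambda$ generate a subgroup of $A(\Gamma)$ isomorphic to $A(\Lambda)$. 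This holds because the retraction $A(\Gamma)\to A(\Lambda)$ that kills the vertices outside $\Lambda$ is a left inverse to the natural map. Since $A(K_{2,2})\cong A(\overline{K_2})\times A(\overline{K_2})\cong F_2\times F_2$, this gives the required subgroup.

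None of these steps is a real obstacle once Theorem~\ref{thm:preskuvnc} is granted. The only place where care is needed is making sure the vertices are genuinely distinct generators. In particular, $\delta_{i,j,t}$ and $\delta_{j,i,t}$ are separate vertices, and the count $n(n-1)c$ uses ordered pairs. The rank in part (2) depends on this choice, so I will state it explicitly.
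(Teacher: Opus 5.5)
Your proposal is correct and follows the paper's proof. For $n\le 3$ you both read parts (1) and (2) off the edgeless defining graph, and for $n\ge 4$ you both use the generators $\delta_{a,b,t_0},\delta_{b,a,t_0},\delta_{p,q,t_0},\delta_{q,p,t_0}$ attached to two disjoint index pairs. Your retraction argument for the full subgraph $K_{2,2}$ is a cleaner justification than the paper gives: the paper only remarks that no defining relations involve only $X_1$ and $X_2$, whereas your argument genuinely proves that each factor is free and that the two factors generate their direct product.
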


\begin{proof}

For $n=2$ (resp. $n=3$), the defining graph of the RAAG in \rethm{preskuvnc} has no edges, hence the group is free of rank $2c$ (resp. $6c$). 

Now we move to the case when $n \ge 4$ and $c \ge 1$.  By Theorem \ref{thm:preskuvnc}, the group $KUV_n(c)$ admits a presentation with generating set 
\[
\{\delta_{i,j,t} \mid 1 \le i \neq j \le n,\; 1 \le t \le c\}
\]
and defining relations
\[
\delta_{i,j,t}\,\delta_{k,\ell,m} = \delta_{k,\ell,m}\,\delta_{i,j,t},
\]
whenever $i,j,k,\ell$ are pairwise distinct and $1\le t,m\le c$.

Since $n \ge 4$, we can choose four distinct indices $a,b,p,q \in \{1,\dots,n\}$. 
Fix a colour $t_0 \in \{1,\dots,c\}$ and define
\[
X_1 = \delta_{a,b,t_0}, \qquad X_2 = \delta_{b,a,t_0},
\]
\[
Y_1 = \delta_{p,q,t_0}, \qquad Y_2 = \delta_{q,p,t_0}.
\]

The set $\{a,b,p,q\}$ has four distinct elements, hence the defining relations imply that  every $X_i$ commutes with every $Y_j$:
\[
[X_i, Y_j] = 1 \quad \text{for } i,j \in \{1,2\}.
\]

There are no defining relations involving only $X_1$ and $X_2$ (they share the indices $a,b$),  so $\langle X_1, X_2 \rangle$ is a free group of rank~$2$. 
Similarly, $\langle Y_1, Y_2 \rangle$ is a free group of rank~$2$.  Because all elements of $\{X_1,X_2\}$ commute with all elements of $\{Y_1,Y_2\}$,  the subgroup generated by $X_1,X_2,Y_1,Y_2$ is isomorphic to the direct product 
\[
\langle X_1, X_2 \rangle \times \langle Y_1, Y_2 \rangle \cong F_2 \times F_2.
\]
Thus $KUV_n(c)$ contains $F_2 \times F_2$ as a subgroup.
\end{proof}

\begin{rem}\label{rem:index}
Since $\pi_n^K$ is surjective and admits the section $\iota\colon S_n\to UV_n(c)$, it follows that
\[
[UV_n(c)\colon KUV_n(c)]=|S_n|=n!,
\]
and $UV_n(c)=KUV_n(c)\rtimes S_n$ (Proposition~\ref{prop:semidirect}).
\end{rem}


\section{Structural and residual properties}\label{sec:structural}

In this section we investigate algebraic, residual and algorithmic properties of the universal virtual braid group $UV_n(c)$.
The key structural feature is that $UV_n(c)$ is virtually a right--angled Artin group. This single fact implies a number of strong consequences.
We also analyse further properties that rely directly on the defining presentation.

\subsection{Properties of universal virtual braid groups}

We begin with elementary structural information.

\begin{prop}\label{prop:basic}
Let $n\ge 2$ and $c\ge 1$.
\begin{enumerate}
\item The group $UV_2(c)$ is isomorphic to the free product
\[
UV_2(c)\cong F_c\ast \Z_2,
\]
where $F_c$ is the free group of rank $c$.

\item For every $n\ge 2$, the abelianization of $UV_n(c)$ is
\[
UV_n(c)^{ab}\cong \Z^c\oplus \Z_2 .
\]
\end{enumerate}
\end{prop}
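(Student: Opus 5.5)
Both parts follow directly from the defining presentation in \redefn{uvnc}, by specialising it (for part (1)) or abelianising it (for part (2)).

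For part (1), take $n=2$. Only the index $i=1$ occurs. Relations (PR1), (PR2), (CR), (MR1) and (MR2) each need either two indices with $|i-j|\ge 2$ or an index $i\le n-2=0$, so they are all vacuous. The only surviving relation is $\rho_1^2=1$ from (PR3). The presentation therefore reduces to
\[
UV_2(c)=\ang{\sigma_{1,1},\dots,\sigma_{1,c},\rho_1 \mid \rho_1^2=1}.
\]
This is the free product of the free group on $\sigma_{1,1},\dots,\sigma_{1,c}$ with $\ang{\rho_1\mid\rho_1^2}\cong\Z_2$, so $UV_2(c)\cong F_c\ast\Z_2$. Nothing here should require more than writing out the presentation.

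For part (2), we abelianise the presentation and read off the relations one at a time.
\begin{itemize}
\item (CR), (MR1) and (PR2) are commutation relations, so they become trivial.
\item (PR1) becomes $2\rho_i+\rho_{i+1}=\rho_i+2\rho_{i+1}$, i.e.\ $\rho_i=\rho_{i+1}$. Hence all $\rho_i$ collapse to a single class $\rho$ when $n\ge 3$.
\item (PR3) gives $2\rho=0$.
\item (MR2) becomes $\rho_i+\rho_{i+1}+\sigma_{i,t}=\sigma_{i+1,t}+\rho_i+\rho_{i+1}$, i.e.\ $\sigma_{i,t}=\sigma_{i+1,t}$. So for each colour $t$ all $\sigma_{i,t}$ collapse to a single class $\sigma_t$.
\end{itemize}
The abelianisation is thus the free abelian group on $\sigma_1,\dots,\sigma_c,\rho$ modulo $2\rho=0$, which is $\Z^c\oplus\Z_2$. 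For $n=2$ this also follows from part (1), since $(F_c\ast\Z_2)^{ab}=\Z^c\oplus\Z_2$.

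The only point needing a little care is that the identifications hold across all indices. This holds because (PR1) and (MR2) each link consecutive indices $i,i+1$ for $i=1,\dots,n-2$, so they chain together all of $\{1,\dots,n-1\}$.

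The only other thing to check is that no relation imposes torsion on the $\sigma$-classes, or any further relation between the $\sigma$-classes and $\rho$. To confirm this, exhibit the surjection $UV_n(c)\to\Z^c\oplus\Z_2$ given by $\sigma_{i,t}\mapsto e_t$ and $\rho_i\mapsto\bar 1$. All defining relations are visibly respected under this map, so it is a well-defined homomorphism. It is onto, and the computed abelianisation is generated by elements mapping to a basis of the target. The induced map on the abelianisation is therefore an isomorphism.
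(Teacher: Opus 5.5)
Your proposal is correct and follows the paper's route. For $n=2$ only (PR3) survives, giving $F_c\ast\Z_2$. For general $n$ you abelianise relation by relation: (MR2) identifies the $\sigma_{i,t}$ of a fixed colour, and (PR3) makes the virtual class have order two. You are a bit more careful than the paper, which leaves implicit both the use of (PR1) to identify all the $\rho_i$ and the check, via your explicit map onto $\Z^c\oplus\Z_2$, that nothing collapses further.
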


\begin{proof}
Let $n\ge 2$ and $c\ge 1$. 
\begin{enumerate}
    \item For $n=2$, the presentation has generators $\sigma_{1,t}$ ($t=1,\dots,c$) and $\rho_1$, with the only relation $\rho_1^2=1$.
Hence
\[
UV_2(c)\cong \ang{\sigma_{1,1},\dots,\sigma_{1,c}}
\ast
\ang{\rho_1},
\]
which is $F_c\ast \Z_2$.

\item In the abelianization all generators commute. Relation (PR3) implies that the class of each $\rho_i$ has order two. Relations (MR2) identify all $\sigma_{i,t}$ with fixed $t$. 
Thus there are $c$ independent infinite cyclic generators coming from the non-virtual crossings and one element of order two coming from the virtual part.
\end{enumerate}
\end{proof}

\begin{prop}\label{prop:UV-commutator-perfect}
Let $n\ge 5$ and $c\ge 1$. Let $G=UV_n(c)$ and let $N=\langle\!\langle \rho_1\rho_3\rangle\!\rangle$ be the normal closure of $\rho_1\rho_3$ in $G$. 
Then $N=G'$, the commutator subgroup of $G$, and $G'$ is perfect.
\end{prop}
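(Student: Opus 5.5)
We would prove the two inclusions $N\subseteq G'$ and $G'\subseteq N$, and then deduce perfectness by showing that the generator $\rho_1\rho_3$ of $N$ already lies in $G''$. Throughout we use the section $\iota\colon S_n\to G$ from Proposition~\ref{prop:semidirect}, which identifies $\langle\rho_1,\dots,\rho_{n-1}\rangle$ with $S_n$.

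\emph{Step 1: $N\subseteq G'$.} By Proposition~\ref{prop:basic}(2), $G^{ab}\cong\Z^c\oplus\Z_2$, and every $\rho_i$ maps to the same element of order two in the $\Z_2$ factor. Hence $\rho_1\rho_3$ maps to $2x=0$, so $\rho_1\rho_3\in G'$. Since $G'$ is normal in $G$, it contains the normal closure $N$.

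\emph{Step 2: $G/N$ is abelian, so $G'\subseteq N$.}
\begin{enumerate}
\item Inside $\iota(S_n)\cong S_n$, the normal closure of $\iota((1\,2)(3\,4))$ is $\iota(A_n)$ for $n\ge5$, since $A_n$ is simple and the double transpositions form a single conjugacy class. Thus $N$ contains $\rho_i\rho_{i+1}$ for every $i$.
\item Consequently, in $G/N$ all the $\rho_i$ become equal to a single element $\bar\rho$ of order at most two.
\item Relation (MR2) becomes $\bar\rho^2\bar\sigma_{i,t}=\bar\sigma_{i+1,t}\bar\rho^2$, i.e.\ $\bar\sigma_{i,t}=\bar\sigma_{i+1,t}$. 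So for each $t$ there is a single image $\bar\sigma_t$.
\item Because $n\ge5$, for each $i$ there is some $j$ with $|i-j|\ge2$. Relation (MR1) then shows that $\bar\sigma_t$ commutes with $\bar\rho$.
\item Relation (CR), applied with indices satisfying $|i-j|\ge2$, shows that $\bar\sigma_t$ and $\bar\sigma_\ell$ commute for all $t,\ell$.
\end{enumerate}
Hence $G/N$ is generated by pairwise commuting elements and is abelian, which gives $G'\subseteq N$. Combined with Step 1, $N=G'$.

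\emph{Step 3: perfectness.} Since $\iota(A_n)$ is generated by even words in the $\rho_i$, it lies in the kernel of the map to $\Z_2$ and of the $\Z^c$ part, so $\iota(A_n)\subseteq G'$. For $n\ge5$ we have $A_n=[A_n,A_n]$, so
\[
\rho_1\rho_3=\iota\bigl((1\,2)(3\,4)\bigr)\in[\iota(A_n),\iota(A_n)]\subseteq G''.
\]
As $G''$ is characteristic in $G'$ and hence normal in $G$, it contains $N=G'$. Therefore $G''=G'$.

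The main point needing care is Step 2(1): justifying that the normal closure of $\rho_1\rho_3$ in $G$ contains every $\rho_i\rho_{i+1}$. We would do this entirely inside the embedded copy $\iota(S_n)$, using that $\iota$ is injective, so that it reduces to the standard fact about $A_n$. Everything else is a routine check against the relations (MR1), (MR2) and (CR).
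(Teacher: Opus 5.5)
Your proposal is correct and follows the paper's proof. Both show that $G/N$ is abelian by collapsing all $\rho_i$ and then applying (MR1), (MR2) and (CR), and both then show $\rho_1\rho_3\in G''$ inside the embedded copy of $S_n$. The only difference is in that last step: you use perfectness of $A_n$ for $n\ge5$, whereas the paper uses Lin's explicit identity $\rho_3\rho_1=(\rho_1\rho_2)^{-1}[\rho_3\rho_1,\rho_1\rho_2](\rho_1\rho_2)$ with $\rho_1\rho_2,\rho_3\rho_1\in G'$. Your separate Step 1 is harmless but redundant, since the chain $G'\le N\le G''\le G'$ already gives $N=G'$.
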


\begin{proof}
Let $G=UV_n(c)$ and $N=\langle\!\langle \rho_1\rho_3\rangle\!\rangle$. 
Set $Q=G/N$.

\smallskip
\noindent\emph{Step 1: $Q\cong G^{ab}$.}
In the subgroup generated by $\rho_1,\dots,\rho_{n-1}$ we have the standard presentation of $S_n$.
Since $\rho_1$ and $\rho_3$ commute, the relation $\rho_1\rho_3=1$ in $Q$ yields $\rho_1=\rho_3$ in $Q$.
By conjugation in $S_n$ it follows that all $\rho_i$ have the same image in $Q$; denote it by $r$.
By (PR3), we have $r^2=1$.

Using (MR2) in $Q$, for each $t$ and each $i=1,\dots,n-2$ we get
\[
\rho_i\rho_{i+1}\sigma_{i,t}=\sigma_{i+1,t}\rho_i\rho_{i+1}.
\]
Since $\rho_i=\rho_{i+1}=r$ in $Q$, we have $\rho_i\rho_{i+1}=r^2=1$, hence $\sigma_{i,t}=\sigma_{i+1,t}$ in $Q$.
Thus, for each $t$ the elements $\sigma_{i,t}$ have a common image; denote it by $s_t$.

Applying (CR) with $(i,j)=(1,3)$ (which is allowed since $|1-3|=2$) yields, for all $t,\ell$,
\[
\sigma_{1,t}\sigma_{3,\ell} = \sigma_{3,\ell}\sigma_{1,t}.
\]
Passing to $Q$ gives $s_t s_\ell=s_\ell s_t$ for all $t,\ell$, so $\langle s_1,\dots,s_c\rangle\cong \mathbb Z^c$.
Moreover, applying (MR1) with $(i,j)=(3,1)$ yields $\sigma_{3,t}\rho_1=\rho_1\sigma_{3,t}$, hence $s_t r=r s_t$.
Therefore $Q\cong \mathbb Z^c\oplus \mathbb Z_2$.

By Proposition~\ref{prop:basic} (abelianization of $UV_n(c)$), we have
$G^{ab}\cong \mathbb Z^c\oplus \mathbb Z_2$, hence $Q\cong G^{ab}$.
Consequently, $G' \le N$.

\smallskip
\noindent\emph{Step 2: $N\le [G',G']$.}
Following an observation of Lin (see \cite[Remark~1.10]{Lin}), in the Coxeter-type subgroup generated by the $\rho_i$ we have the identity
\[
\rho_3\rho_1=(\rho_1\rho_2)^{-1}\,[\rho_3\rho_1,\rho_1\rho_2]\,(\rho_1\rho_2),
\]
where $[a,b]=a^{-1}b^{-1}ab$.
In $G^{ab}$ all $\rho_i$ have the same class of order $2$, hence both $\rho_1\rho_2$ and $\rho_3\rho_1$ map to the identity in $G^{ab}$, and therefore belong to $G'$.
It follows that $\rho_3\rho_1\in [G',G']$. Since $[G',G']$ is characteristic in $G$, the normal closure $N=\langle\!\langle \rho_1\rho_3\rangle\!\rangle$ is contained in $[G',G']$, i.e. $N\le [G',G']$.

\smallskip
Combining the inclusions $G'\le N\le [G',G']\le G'$, we obtain $N=G'=[G',G']$.
Thus $G'$ is perfect.
\end{proof}

The structural properties of $UV_n(c)$ are largely governed by the fact that it is virtually a right--angled Artin group. 
Recall that a group satisfies the \emph{Tits alternative} if every finitely generated subgroup either contains a non-abelian free group or is virtually solvable.
Classical braid groups and right-angled Artin groups are well-known examples of groups satisfying this property.

\begin{thm}\label{thm:virtually-raag}
Let $n\ge 2$ and $c\ge 1$. The group $UV_n(c)$ is virtually a right--angled Artin group. More precisely, the subgroup $KUV_n(c)$ is a right--angled Artin group of finite index in $UV_n(c)$.

Consequently: 
\begin{enumerate}
\item $UV_n(c)$ is linear and residually finite;

\item the word and conjugacy problems in $UV_n(c)$ are solvable;

\item $UV_n(c)$ satisfies the Tits alternative;

\item $UV_n(c)$ has exponential growth.
\end{enumerate}
\end{thm}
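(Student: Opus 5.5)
The first assertion is immediate from what precedes: by \rethm{preskuvnc} the subgroup $KUV_n(c)$ is a right--angled Artin group, and by \rerem{index} it has index $n!$ in $UV_n(c)$. The plan is to prove each of (1)--(4) by first recording the property for a finite-index RAAG $A=KUV_n(c)$, and then checking that it passes to a finite extension. I will use the split extension $UV_n(c)=KUV_n(c)\rtimes S_n$ of Proposition~\ref{prop:semidirect} wherever it helps.

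For (1), RAAGs embed in right-angled Coxeter groups, which are linear over $\Z$. Hence $A$ is $\Z$-linear, and by finitely generated linearity (Malcev) it is residually finite. Linearity passes to finite extensions via the induced representation: if $\rho\colon A\to GL_d(\Z)$ is faithful, then $\operatorname{Ind}_A^{G}\rho$ is a faithful representation of $G=UV_n(c)$ in $GL_{dn!}(\Z)$. Residual finiteness then follows from Malcev's theorem, or directly, since a group containing a residually finite subgroup of finite index is residually finite.

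For (3), RAAGs satisfy the Tits alternative, for instance as $\Z$-linear groups via Tits' theorem in characteristic zero. Since $UV_n(c)$ is itself finitely generated and linear over $\Z$ by (1), the conclusion follows at once from Tits' theorem. For (4), $A$ contains a free group of rank at least $2$: already $\delta_{1,2,t}$ and $\delta_{2,1,t}$ generate $F_2$ by \rethm{preskuvnc}, the relevant vertices being non-adjacent. So $UV_n(c)$ contains $F_2$ and has exponential growth, because growth of a finitely generated group dominates that of any finitely generated subgroup.

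The main obstacle is (2), and within it the conjugacy problem. The word problem is easy: a finite-index subgroup with solvable word problem gives solvable word problem for the finitely presented group. Concretely, one rewrites a word in normal form $a\,\iota(w)$ using the explicit action of Lemma~\ref{lem:actionkuvnc} on the generators $\delta_{i,j,t}$, and then applies the normal form for RAAGs.

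Conjugacy is not automatically inherited by finite extensions, so for it I would argue geometrically. $A$ acts geometrically on its Salvetti complex's universal cover, a CAT(0) cube complex $X$. The plan is to extend this to a proper cocompact action of $G$ by cubical automorphisms. Here $S_n$ permutes the generators $\delta_{i,j,t}$ via Lemma~\ref{lem:actionkuvnc}, preserving the commutation graph, so it acts by graph automorphisms on the Salvetti complex. The semidirect product therefore acts on $X$.

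Then $G$ is CAT(0), and in fact cocompactly cubulated, so the conjugacy problem is solvable by Bridson--Haefliger. Alternatively, I would invoke biautomaticity of groups acting geometrically on CAT(0) cube complexes (Niblo--Reeves), which gives solvable conjugacy problem. The step to verify carefully is that the $S_n$-action on $A$ is exactly induced by the graph automorphism, so that the action on $X$ is well defined.
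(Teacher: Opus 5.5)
Your proposal is correct. For items (1), (3) and (4) it is an explicit version of what the paper does. The paper only records that right-angled Artin groups have these properties and asserts that all of them pass to finite extensions. You supply the mechanisms instead:
\begin{itemize}
\item for linearity, Davis--Januszkiewicz together with the integral Tits representation, then induction from $KUV_n(c)$;
\item for the Tits alternative, Tits' theorem applied directly to the linear group $UV_n(c)$;
\item for exponential growth, the explicit free subgroup $\langle\delta_{1,2,t},\delta_{2,1,t}\rangle$.
\end{itemize}

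The genuine difference is in (2), and there your route is the one that actually works. The paper's blanket claim that solvability of the conjugacy problem is inherited by finite extensions is false in general. Collins and Miller constructed finitely presented groups with an index-two subgroup such that solvability of the conjugacy problem differs between the subgroup and the whole group. So the paper's one-line justification is incomplete precisely at the conjugacy problem, and you were right to flag it.

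Your argument costs one verification, and \relem{actionkuvnc} together with the presentation in \rethm{preskuvnc} supplies it. Conjugation by $\iota(S_n)$ permutes the vertex generators $\delta_{i,j,t}$ and preserves disjointness of index pairs. Hence it acts by automorphisms of the defining graph, and these act cellularly on the Salvetti complex, fixing its unique vertex. Take the lifts to the universal cover $X$ that fix a chosen lift of that vertex. By uniqueness of lifts, these form a copy of $S_n$ whose conjugation action on the deck group is exactly the given one. This yields an action of $KUV_n(c)\rtimes S_n=UV_n(c)$ on $X$. The action is proper and cocompact because $KUV_n(c)$ acts freely and cocompactly and has finite index.

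Bridson--Haefliger (or Niblo--Reeves plus biautomaticity) then gives solvability of the conjugacy problem. As a by-product, you obtain explicitly the proper cocompact action of $UV_n(c)$ on a CAT(0) cube complex, which the paper only asserts in a later remark.
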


\begin{proof}
By Theorem~\ref{thm:preskuvnc}, the subgroup $KUV_n(c)$ is a right--angled Artin group.
By Remark~\ref{rem:index}, it has finite index $n!$ in $UV_n(c)$.

Right--angled Artin groups are linear and residually finite, have solvable word and conjugacy problems, and satisfy the Tits alternative. 
These properties are all preserved under taking finite extensions (see e.g. \cite{Bogopolski2008} for detailed treatments).
Hence they hold for $UV_n(c)$.

Finally, the group $KUV_n(c)$ is non--abelian for all $n\ge 2$ and $c\ge 1$ (with the sole exception of the degenerate case $n=2$, $c=0$, which does not occur), and therefore has exponential growth.
Exponential growth is preserved under finite extensions (see e.g. \cite{Bogopolski2008}). 
\end{proof}

We now collect several structural properties of $UV_n(c)$ that follow either from its virtual RAAG structure or directly from its presentation. For a group $G$, the \emph{lower central series} is defined inductively by $\Gamma_1(G)=G$ and $\Gamma_{k+1}(G)=[G,\Gamma_k(G)]$, the subgroup generated by commutators $[x,y]=x^{-1}y^{-1}xy$ with $x\in G$, $y\in\Gamma_k(G)$.

\begin{thm}\label{thm:structural}
Let $n\ge 2$ and $c\ge 1$.
\begin{enumerate}
\item The group $UV_n(c)$ is Hopfian.
\item \begin{enumerate}
      \item If $n=2,3$, the lower central series of $UV_n(c)$ does not stabilize.
      \item If $n\ge 4$, then $\Gamma_2(UV_n(c)) = \Gamma_3(UV_n(c))$, and in particular $UV_n(c)$ is not residually nilpotent.
      \end{enumerate}
\end{enumerate}
\end{thm}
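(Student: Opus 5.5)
The plan is to prove the three assertions separately: (1) from residual finiteness, (2a) by exhibiting quotients whose lower central series visibly does not stabilize, and (2b) by a direct computation in $G/\Gamma_3(G)$ using only the defining relations.

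\emph{(1) Hopfian.} $UV_n(c)$ is finitely generated, and it is residually finite by Theorem~\ref{thm:virtually-raag}. By Mal'cev's theorem, every finitely generated residually finite group is Hopfian, which gives (1).

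\emph{(2a) Small $n$.} The basic observation is that a surjection $f\colon G\to H$ maps $\Gamma_k(G)$ onto $\Gamma_k(H)$. Hence if the lower central series of $G$ stabilizes, so does that of $H$. It therefore suffices to find a quotient of $UV_n(c)$ whose lower central series is strictly decreasing.

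For $n=2$, use Proposition~\ref{prop:basic}: $UV_2(c)\cong F_c\ast\Z_2$. Map $F_c$ onto $\Z_2$ by sending one basis element to the generator and killing the others. This gives a surjection onto $\Z_2\ast\Z_2\cong D_\infty=\ang{x}\rtimes\Z_2$. For $D_\infty$ one computes $\Gamma_k(D_\infty)=\ang{x^{2^{k-1}}}$, which is strictly decreasing.

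For $n=3$, impose $\rho_1=\rho_2=a$ on $UV_3(c)$:
\begin{itemize}
\item (PR1) becomes $a^3=a^3$ and (PR3) becomes $a^2=1$;
\item (MR2) becomes $\sigma_{1,t}=\sigma_{2,t}$, since $\rho_1\rho_2=1$;
\item (CR) and (MR1) are vacuous when $n=3$.
\end{itemize}
So the quotient is again $F_c\ast\Z_2$, and the $n=2$ argument applies.

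\emph{(2b) $n\ge 4$.} Work in $\bar G=G/\Gamma_3(G)$. This group has nilpotency class at most $2$, so $\Gamma_2(\bar G)$ is central and commutators there are bilinear.

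First show that the $\rho_i$ all have the same image. Since $\rho_i$ and $\rho_{i+1}$ are conjugate in $G$, their images satisfy $\bar\rho_{i+1}=\bar\rho_i z$ with $z$ central. Substituting into (PR1) gives $\bar\rho_i^3z=\bar\rho_i^3z^2$, so $z=1$.

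Next, (MR2) then gives $\bar\sigma_{i,t}=\bar\sigma_{i+1,t}$ for every $i$ and $t$. So $\bar G$ is generated by one element $r$ together with elements $s_1,\dots,s_c$.

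These generators commute pairwise:
\begin{itemize}
\item $s_t$ and $s_\ell$ commute by applying (CR) to the pair $\sigma_{1,t},\sigma_{3,\ell}$, which needs $n\ge4$;
\item $s_t$ and $r$ commute by applying (MR1) to the pair $\sigma_{3,t},\rho_1$, which again needs $n\ge4$.
\end{itemize}
Hence $\bar G$ is abelian, which means $\Gamma_2(G)=\Gamma_3(G)$.

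It follows that $\Gamma_k(G)=\Gamma_2(G)$ for all $k\ge 2$. This subgroup is nontrivial because $G$ is non-abelian; for instance, $KUV_n(c)$ is a non-abelian RAAG. Therefore $\bigcap_k\Gamma_k(G)\ne1$, so $G$ is not residually nilpotent.

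The only delicate point is the central-element trick that forces $\bar\rho_i=\bar\rho_{i+1}$. It uses that conjugate elements differ by a central factor in a class-$2$ quotient, combined with the braid relation (PR1).
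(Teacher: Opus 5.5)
Your proof is correct. Part (1) is the paper's argument (finitely generated and residually finite, hence Hopfian). For part (2) you take a different route, and yours is the one that actually closes the argument.

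For (2a), the paper observes that the subgroup generated by the $\sigma_{i,t}$ is free, and asserts that non-stabilization of its lower central series ``persists'' in $UV_n(c)$. But non-stabilization passes from quotients up to the group, not from subgroups. Indeed, for $n\ge 4$ the subgroup $\langle\sigma_{1,1},\sigma_{2,1}\rangle$, spanned by two non-adjacent vertices of the RAAG $KUV_n(c)$, is also free, yet the series of $UV_n(c)$ stabilizes by (2b). Your surjection onto $D_\infty$ supplies exactly the missing justification.

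For (2b), the paper rewrites (MR2) as $[\sigma_{i,t}^{-1}\sigma_{i+1,t},\rho_i\rho_{i+1}]=\sigma_{i,t}^{-1}\sigma_{i+1,t}$, and claims that $\Gamma_2$ is generated by the elements $\sigma_{i,t}^{-1}\sigma_{i+1,t}$. Neither holds. Under $\pi_n^P$, both $\sigma_{i,t}^{-1}\sigma_{i+1,t}$ and $\rho_i\rho_{i+1}$ map to $s_is_{i+1}$, so the left side maps to $1$ while the right side maps to a $3$-cycle. Also, $\pi_n^K$ kills every $\sigma_{i,t}^{-1}\sigma_{i+1,t}$ but not $\rho_1\rho_2\in\Gamma_2$.

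Your computation is in effect Step~1 of Proposition~\ref{prop:UV-commutator-perfect} carried out in the class-two quotient $G/\Gamma_3(G)$. The central-element trick on (PR1) does the work of forcing $\bar\rho_i=\bar\rho_{i+1}$, and the argument is complete and uniform for $n\ge 4$. For $n\ge 5$ one could instead deduce (2b) from the perfectness of $G'$, since $G'=[G',G']\le\Gamma_3(G)$. That route fails at $n=4$, where $\pi_4^K$ maps $UV_4(c)'$ onto the non-perfect group $A_4$, so your direct argument is what covers $n=4$.
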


\begin{proof}

Let $n\ge 2$ and $c\ge 1$.
\begin{enumerate}
    \item  Since $UV_n(c)$ is finitely generated and residually finite (Theorem~\ref{thm:virtually-raag}), it is Hopfian.

\item The argument follows the strategy used for virtual braid groups (see \cite{BB}).

For $n\ge 4$, relation (MR2) gives
\[
\rho_i\rho_{i+1}\sigma_{i,t}
=
\sigma_{i+1,t}\rho_i\rho_{i+1}.
\]
Rewriting this as a commutator,
\[
[\sigma_{i,t}^{-1}\sigma_{i+1,t},\,
\rho_i\rho_{i+1}]
=
\sigma_{i,t}^{-1}\sigma_{i+1,t}.
\]
Thus $\sigma_{i,t}^{-1}\sigma_{i+1,t}$ belongs to $\Gamma_2(UV_n(c))$ and, being a commutator, also to $\Gamma_3(UV_n(c))$.
It follows that
$\sigma_{i,t}\equiv \sigma_{i+1,t}
\pmod{\Gamma_3(UV_n(c))}$. 
Iterating, all non-virtual generators become equal modulo $\Gamma_3(UV_n(c))$. 
Since $\Gamma_2(UV_n(c))$ is generated by commutators of the form $\sigma_{i,t}^{-1}\sigma_{i+1,t}$, which lie in $\Gamma_3(UV_n(c))$, we have $\Gamma_2(UV_n(c)) \subseteq \Gamma_3(UV_n(c))$. 
The reverse inclusion $\Gamma_3(UV_n(c)) \subseteq \Gamma_2(UV_n(c))$ always holds, hence $\Gamma_2(UV_n(c)) = \Gamma_3(UV_n(c))$.

For $n=2,3$, the subgroup generated by the non-virtual generators is free (Corollary~\ref{cor:kuv_small}), and its lower central series does not stabilise.
This non--stabilisation persists in $UV_n(c)$.
\end{enumerate}
\end{proof}

\begin{rem}
The fact that $UV_n(c)$ is virtually a right--angled Artin group places it in the framework of virtually special groups in the sense of Haglund--Wise \cite{HW2008}. 
In particular, $UV_n(c)$ is virtually compact special and therefore admits a proper and cocompact action on a CAT(0) cube complex (see \cite{Wise2012}). This structural viewpoint provides additional information on separability properties, subgroup structure and residual behaviour, and situates the universal virtual braid group within the modern theory of cubulated groups.
\end{rem}

\subsection{Virtual cohomological dimension}
\label{subsec:vcd}

Recall that for a group $G$ that is virtually torsion-free, the \emph{virtual cohomological dimension} of $G$, denoted $\operatorname{vcd}(G)$, is defined as the cohomological dimension of any torsion-free subgroup of finite index; this is well-defined independently of the chosen subgroup (see, for example, \cite{Bieri1982}). 
Since $KUV_n(c)$ is torsion-free and has finite index $n!$ in $UV_n(c)$ (Remark~\ref{rem:index}), we have
\[
\operatorname{vcd}(UV_n(c)) = \operatorname{cd}(KUV_n(c)).
\]
We now compute this dimension.

\begin{thm}\label{thm:vcd}
Let $n\ge 2$ and $c\ge 1$. Then
\[
\operatorname{vcd}\bigl(UV_n(c)\bigr)=\Bigl\lfloor\frac{n}{2}\Bigr\rfloor.
\]
\end{thm}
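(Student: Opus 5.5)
Since $KUV_n(c)$ is torsion-free of finite index (Remark~\ref{rem:index}), it suffices to compute $\operatorname{cd}(KUV_n(c))$. By Theorem~\ref{thm:preskuvnc}, $KUV_n(c)$ is the right--angled Artin group $A(\Gamma)$ whose vertices are the generators $\delta_{i,j,t}$, with an edge between two of them exactly when their index pairs $\{i,j\}$ and $\{k,\ell\}$ are disjoint. I will use the standard fact that for a RAAG the Salvetti complex is a finite $K(A(\Gamma),1)$ whose dimension is the clique number of $\Gamma$, and that this bound is attained: a clique of size $d$ spans a subgroup $\mathbb Z^d$, so $\operatorname{cd}(A(\Gamma))\ge d$. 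Hence $\operatorname{cd}(A(\Gamma))=\omega(\Gamma)$, the maximal size of a complete subgraph.

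It then remains to determine $\omega(\Gamma)$. \emph{Upper bound:} in a clique, the index pairs $\{i,j\}$ of the vertices must be pairwise disjoint. In particular two distinct vertices cannot share the same unordered pair, whether they differ in orientation ($\delta_{i,j,t}$ versus $\delta_{j,i,t}$) or in colour ($\delta_{i,j,t}$ versus $\delta_{i,j,m}$), because then the pairs intersect. So a clique gives a family of pairwise disjoint $2$-subsets of $\{1,\dots,n\}$, i.e.\ a matching of the complete graph $K_n$, and such a family has at most $\lfloor n/2\rfloor$ members. \emph{Lower bound:} the elements $\delta_{1,2,1},\delta_{3,4,1},\dots,\delta_{2m-1,2m,1}$ with $m=\lfloor n/2\rfloor$ pairwise commute by Theorem~\ref{thm:preskuvnc}, so they span a clique of size $m$.

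The main obstacle is making sure the cited identification $\operatorname{cd}(A(\Gamma))=\omega(\Gamma)$ is used correctly, in particular that it is the clique number and not, for instance, the number of vertices. The upper bound comes from the dimension of the Salvetti complex, which has one $k$-torus for each $k$-clique. The lower bound comes from monotonicity of cohomological dimension under passage to subgroups, together with the injectivity of special subgroups of RAAGs: the subgroup generated by a clique is genuinely $\mathbb Z^m$. For $n=2,3$ the answer is consistent with Corollary~\ref{cor:kuv_small}, since there $KUV_n(c)$ is free and the formula gives $\operatorname{cd}=1$.

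Combining the two bounds, $\operatorname{cd}(KUV_n(c))=\omega(\Gamma)=\lfloor n/2\rfloor$, and therefore $\operatorname{vcd}(UV_n(c))=\lfloor n/2\rfloor$.
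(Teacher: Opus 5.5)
Your proposal is correct and follows essentially the same route as the paper. Both reduce to $\operatorname{cd}(KUV_n(c))$, use that the cohomological dimension of a RAAG equals the clique number of its defining graph, and note that cliques in $\Gamma_{n,c}$ are families of vertices with pairwise disjoint index pairs, so they have size at most $\lfloor n/2\rfloor$, with equality attained. The only difference is that you justify $\operatorname{cd}(A(\Gamma))=\omega(\Gamma)$ directly, via the Salvetti complex for the upper bound and a $\mathbb{Z}^m$ clique subgroup for the lower bound, whereas the paper cites Davis.
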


\begin{proof}
By Theorem~\ref{thm:preskuvnc}, the group $KUV_n(c)$ is the RAAG associated to the graph $\Gamma_{n,c}$ whose vertices are the generators $\delta_{i,j,t}$ ($1\le i\neq j\le n$, $1\le t\le c$), and where two vertices are adjacent if and only if the unordered pairs $\{i,j\}$ and $\{k,\ell\}$ are disjoint.  
For a RAAG $A(\Gamma)$, one has $\operatorname{cd}(A(\Gamma))=\omega(\Gamma)$, the size of the largest clique in $\Gamma$ (see \cite[Corollary~5.5]{Davis2008}). 
Recall that a \emph{clique} in a graph is a set of vertices pairwise joined by an edge.

A clique in $\Gamma_{n,c}$ corresponds to a collection of vertices $\delta_{i_1,j_1,t_1},\dots,\delta_{i_r,j_r,t_r}$ such that the unordered pairs $\{i_s,j_s\}$ are pairwise disjoint. Hence $2r\le n$, so $r\le\lfloor n/2\rfloor$.
Conversely, choosing $\lfloor n/2\rfloor$ disjoint unordered pairs in $\{1,\dots,n\}$ yields a clique of size $\lfloor n/2\rfloor$ in $\Gamma_{n,c}$ (by selecting any orientation and any type for each pair). Thus $\omega(\Gamma_{n,c})=\lfloor n/2\rfloor$, and the claim follows.
\end{proof}

\begin{rem}\label{rem:vcd-examples}
For the specific families that motivated the definition of $UV_n(c)$, the virtual cohomological dimension is known in some cases.

\begin{itemize}
\item \textbf{Virtual braid groups $VB_n$.} 
Godelle and Paris proved that $\operatorname{vcd}(VB_n)=n-1$ \cite[Corollary~6.3]{GP2012}.

\item \textbf{Virtual singular braid groups $VSG_n$.} 
The virtual cohomological dimension of $VSG_n$ has not been computed in the literature. 
However, by \cite[Theorem~16]{Ocampo}, there exists a split epimorphism $VSG_n \to VB_n$ (sending $\tau_i \mapsto 1$) with section $VB_n \hookrightarrow VSG_n$ given by the natural inclusion of generators. 
Hence $VB_n$ is isomorphic to a subgroup of $VSG_n$, and since $\operatorname{vcd}$ is monotone under taking subgroups (see \cite[Proposition~2.4(a)]{Brown1982}), we obtain $\operatorname{vcd}(VSG_n) \ge \operatorname{vcd}(VB_n) = n-1$.

\item \textbf{Virtual twin groups $VT_n$.} 
Naik, Nanda, and Singh proved that the pure virtual twin group $PVT_n$ is a right-angled Artin group whose defining graph has vertices indexed by unordered pairs $\{i,j\}$ with $1 \le i < j \le n$, where two vertices commute if and only if the corresponding pairs are disjoint \cite[Corollary~3.4]{NNS2023}. 
For a right-angled Artin group, the cohomological dimension equals the size of the largest clique in its defining graph \cite[Corollary~5.5]{Davis2008}. 
The largest clique in this graph consists of $\lfloor n/2\rfloor$ pairwise disjoint pairs, hence $\operatorname{cd}(PVT_n) = \lfloor n/2\rfloor$. 
Since $VT_n$ is a finite extension of $PVT_n$ (see \cite[Proposition~3.1]{NNS2023}), it follows that $\operatorname{vcd}(VT_n) = \lfloor n/2\rfloor$.

\item \textbf{Multi-virtual braid groups $M_kVB_n$.} 
As far as we know, the virtual cohomological dimension of $M_kVB_n$ has not been studied in the literature. No nontrivial bounds are currently known.
\end{itemize}

Thus, among the natural quotients of $UV_n(c)$ that preserve the virtual generators, the virtual cohomological dimension varies significantly: it can be as large as $n-1$ (for $VB_n$ and $VSG_n$) or as small as $\lfloor n/2\rfloor$ (for $VT_n$). The general question of determining $\operatorname{vcd}(Q)$ for an arbitrary quotient $Q = UV_n(c)/N$ (with $N \cap \langle \rho_1,\dots,\rho_{n-1}\rangle = \{1\}$) remains open.
\end{rem}

\subsection{Center of $UV_n(c)$}

We now determine the center of the universal virtual braid group. 
Recall that $KUV_n(c)$ is a right--angled Artin group whose defining graph $\Gamma_{n,c}$ has vertices $\delta_{i,j,t}$ with $1\le i\neq j\le n$, $1\le t\le c$, and where two vertices are adjacent if and only if the corresponding unordered pairs $\{i,j\}$ and $\{k,\ell\}$ are disjoint.

For $n\ge 4$, the graph $\Gamma_{n,c}$ is connected and not complete, and a connected right--angled Artin group with non-complete graph has trivial center (see \cite[Corollary~5.5]{Davis2008}).
For $n=2,3$, $KUV_n(c)$ is free of rank $2c$ and $6c$, respectively, and therefore has trivial center as well (a non-abelian free group has trivial center; the abelian case $c=0$ does not occur).

\begin{prop}\label{prop:center-uv}
Let $n\ge 2$ and $c\ge 1$. Then $Z(UV_n(c))=1$.
\end{prop}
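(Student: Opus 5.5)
Let $z\in Z(UV_n(c))$, and use the splitting $UV_n(c)=KUV_n(c)\rtimes S_n$ of Proposition~\ref{prop:semidirect} to write $z=k\,\iota(w)$ uniquely with $k\in KUV_n(c)$ and $w\in S_n$. The plan has two steps: first show $w=1$, then show $k=1$.

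\textbf{Step 1: $w=1$.} Since $\pi_n^K$ is surjective, $\pi_n^K(z)=w$ lies in $Z(S_n)$. For $n\ge3$ the center of $S_n$ is trivial, so $w=1$. For $n=2$ this argument fails, because $S_2$ is abelian, so I would argue directly. Here $UV_2(c)\cong F_c\ast\Z_2$ by Proposition~\ref{prop:basic}, and this is a nontrivial free product that is not $\Z_2\ast\Z_2$. Such a free product has trivial center. This can be seen from normal forms: a central element must commute with both $\sigma_{1,1}$ and $\rho_1$, and in a free product the centralizer of a factor element is contained in that factor. So for $n=2$ we get $z=1$ immediately, and from now on we may assume $n\ge3$, so that $z=k\in KUV_n(c)$.

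\textbf{Step 2: $k=1$.} First, $k$ commutes with every element of $KUV_n(c)$, so $k\in Z(KUV_n(c))$. By the discussion preceding the statement, this center is trivial for all $n\ge2$, $c\ge1$:
\begin{itemize}
\item for $n=3$ the group is free of rank $6c\ge 2$;
\item for $n\ge4$ the graph $\Gamma_{n,c}$ is connected and not complete, so the RAAG has trivial center.
\end{itemize}
Hence $k=1$.

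\textbf{Checking the graph claim.} The substantive input is the RAAG structure of Theorem~\ref{thm:preskuvnc} together with the fact that $Z(KUV_n(c))=1$, so I would verify the graph-theoretic claim carefully. The center of a RAAG $A(\Gamma)$ is generated by the vertices adjacent to all other vertices. So it suffices to observe that no vertex $\delta_{i,j,t}$ is adjacent to everything: it is not adjacent to $\delta_{j,i,t}$, since the pairs $\{i,j\}$ and $\{j,i\}$ intersect. This already kills the center without needing connectivity, and it handles the small cases uniformly as well. For $n=2$ this even gives an alternative route to Step 2 once $w=1$ is known, but Step 1 for $n=2$ still needs the free product argument above.

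The main obstacle is the case $n=2$, where $S_2$ is abelian and the reduction to $KUV_n(c)$ is unavailable. I would handle it by the free product argument of Step 1. Alternatively, for $z=k\rho_1$ one can note that conjugation by $\rho_1$ swaps $\delta_{1,2,t}$ and $\delta_{2,1,t}$ (Lemma~\ref{lem:actionkuvnc}); centrality would then force $k\,\delta_{2,1,t}\,k^{-1}=\delta_{1,2,t}$ in the free group $KUV_2(c)$, which is impossible because these are distinct basis elements and hence not conjugate.
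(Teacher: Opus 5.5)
Your argument is correct. It shares the paper's outline (reduce to $KUV_n(c)$, use $Z(KUV_n(c))=1$, treat $n=2$ through $F_c\ast\Z_2$), but your reduction step is genuinely different.

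The paper shows $Z(UV_n(c))\cap KUV_n(c)=1$. It then asserts that a central element lies in a finite subgroup (``the preimage of a finite subgroup of $S_n$''), and checks that no non-trivial element of $\iota(S_n)$ is central. That only rules out central elements lying in $KUV_n(c)$ or in $\iota(S_n)$. A mixed element $k\,\iota(w)$ with $k\neq1\neq w$ is not addressed. Moreover, the preimage in question contains $KUV_n(c)$, so it is not finite. Making that route work would need extra input:
\begin{itemize}
\item that $z$ has finite order, which follows from $z^{m}\in Z(UV_n(c))\cap KUV_n(c)=1$ with $m$ the order of $\pi_n^K(z)$;
\item a result conjugating finite subgroups into $\iota(S_n)$.
\end{itemize}
Your observation $\pi_n^K(z)\in Z(S_n)=1$ for $n\ge 3$ disposes of all elements $k\,\iota(w)$ at once, using only surjectivity of $\pi_n^K$.

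Your RAAG criterion is also the right one, whereas the paper's is not. ``Connected and not complete'' does not force a trivial center: the path on three vertices gives $F_2\times\Z$. In addition, $\Gamma_{4,c}$ is in fact disconnected, with three components $K_{2c,2c}$, one for each perfect matching of $\{1,2,3,4\}$. So drop the bullet in Step 2 that repeats the connectivity claim. Rely solely on your check that no $\delta_{i,j,t}$ is adjacent to all other vertices (it misses $\delta_{j,i,t}$), which works uniformly for all $n\ge 2$.

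Finally, excluding $\Z_2\ast\Z_2$ is unnecessary. The infinite dihedral group is centerless too, and your centralizer argument already shows that any free product of two non-trivial groups has trivial center.
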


\begin{proof}
We first note that $Z(UV_n(c))\cap KUV_n(c)\subseteq Z(KUV_n(c))$.
As argued above, 
$$
Z(KUV_n(c))=1 \textrm{ for all } n\ge 2,\, c\ge 1
$$ 
(for $n=2,3$ because $KUV_n(c)$ is free of rank $\ge 2$; for $n\ge 4$ because its defining graph is connected and non-complete).
Thus $Z(UV_n(c))\cap KUV_n(c)=1$.

Since $KUV_n(c)$ has finite index $n!$ in $UV_n(c)$ (Remark~\ref{rem:index}), any element $z\in Z(UV_n(c))$ has finite order when projected to $UV_n(c)/KUV_n(c)\cong S_n$, and hence $z$ lies in a finite subgroup of $UV_n(c)$ (the preimage of a finite subgroup of $S_n$ under the projection).
On the other hand, the subgroup $\iota(S_n)$ generated by the $\rho_i$ is a complement of $KUV_n(c)$ in $UV_n(c)$, and its non-trivial elements do not centralize $KUV_n(c)$ for $n\ge 3$ (for instance, $\rho_1$ does not commute with $\delta_{1,2,1}=\sigma_{1,1}$ by (MR2)). Hence no non-trivial element of $\iota(S_n)$ can lie in the center. 
Consequently, $Z(UV_n(c))=1$ for $n\ge 3$.

For $n=2$, $UV_2(c)\cong F_c*\mathbb{Z}_2$ (Proposition~\ref{prop:basic}). 
A free product of two non-trivial groups has trivial center unless both factors are of order $2$ and the free product is the infinite dihedral group, but here $F_c$ is free of rank $c\ge 1$, so the center is trivial.
\end{proof}

\section{Subgroup separability}\label{sec:lerf}

In this section we investigate subgroup separability (also known as the LERF property) for the universal virtual braid group $UV_n(c)$ and its distinguished subgroups.  
A group $G$ is said to be \emph{LERF} (locally extended residually finite) if every finitely generated subgroup of $G$ is closed in the profinite topology; equivalently, every finitely generated subgroup is an intersection of finite-index subgroups. 
(Recall that the \emph{profinite topology} on a group is the topology whose basis of open sets consists of cosets of finite-index subgroups.) This property is a natural strengthening of residual finiteness and has important consequences for the solubility of the generalized word problem (see \cite{ALO} for the case of braid groups).

The presence of a right-angled Artin subgroup of finite index in $UV_n(c)$ (Theorem~\ref{thm:preskuvnc}) allows us to apply known results on separability for RAAGs and free groups, leading to a complete classification of the LERF property in terms of the parameters $n$ and $c$. 
We begin by recalling a classical fact that will be used repeatedly throughout this section.

The following theorem, due to Scott \cite{Scott78}, describes how the LERF property behaves under passage to finite-index subgroups and extensions. 
It will be our main tool for transferring information between $UV_n(c)$ and its finite-index subgroups $KUV_n(c)$ and $PUV_n(c)$.

\begin{thm}[Scott]\label{thm:lerf_scott}
Let $H$ be a subgroup of a group $G$.
\begin{enumerate}
    \item If $H$ is not LERF, then $G$ is not LERF.
    \item If $[G\colon H]<\infty$, then $G$ is LERF if and only if $H$ is LERF.
\end{enumerate}
\end{thm}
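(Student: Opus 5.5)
The plan is to argue directly from the definition of LERF via the profinite topology. A subgroup is separable exactly when it is closed in the profinite topology. The only real input is a comparison between the profinite topology of a subgroup $H$ and the topology induced on $H$ by the profinite topology of $G$.

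For part~(1) I will prove the contrapositive: if $G$ is LERF, then so is every subgroup $H\le G$.
\begin{enumerate}
\item Let $K\le H$ be finitely generated and let $h\in H\setminus K$.
\item Since $K$ is a finitely generated subgroup of $G$, LERF for $G$ gives a finite-index subgroup $G_0\le G$ with $K\le G_0$ and $h\notin G_0$.
\item Then $H\cap G_0$ has finite index in $H$, contains $K$, and does not contain $h$.
\end{enumerate}
Hence $K$ is an intersection of finite-index subgroups of $H$, so $H$ is LERF. This also gives the implication ``$G$ LERF $\Rightarrow$ $H$ LERF'' in part~(2).

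For the converse in part~(2), assume $[G\colon H]<\infty$ and that $H$ is LERF. The key step, and the only mildly delicate one, is to show that every subset of $H$ closed in the profinite topology of $H$ is closed in $G$. It suffices to show that basic open sets of $H$ are open in $G$. If $L\le H$ has finite index in $H$, then $[G\colon L]=[G\colon H][H\colon L]<\infty$. So every coset $xL$ with $x\in H$ is already a basic open set of $G$. Moreover $H$ is a finite-index subgroup of $G$, hence open, and its complement $G\setminus H$ is a finite union of cosets of $H$, hence open too; so $H$ is clopen in $G$. Together these show that a set closed in $H$ is closed in $G$.

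With this in hand, let $K\le G$ be finitely generated. The subgroup $K\cap H$ has finite index in $K$ (it is the intersection with a finite-index subgroup), so it is finitely generated. Since $H$ is LERF, $K\cap H$ is closed in $H$, hence closed in $G$ by the key step. Write $K=\bigcup_{i=1}^{r}(K\cap H)k_i$ as a finite union of cosets. Right multiplication by $k_i$ is a homeomorphism of $G$ in its profinite topology, so each coset $(K\cap H)k_i$ is closed. Therefore $K$ is closed in $G$, and $G$ is LERF.
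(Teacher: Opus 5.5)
Your proof is correct and complete. The paper gives no proof of its own, only citing Scott's 1978 paper. Your argument is the standard proof of that lemma:
\begin{enumerate}
\item for part~(1), you intersect a separating finite-index subgroup of $G$ with $H$;
\item for part~(2), you use that a finite-index subgroup is clopen in $G$, so that closed sets in its own profinite topology are closed in $G$, and then write $K$ as a finite union of translates of the finitely generated subgroup $K\cap H$.
\end{enumerate}
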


We also recall that free groups are LERF, while the group $F_2\times F_2$ is not LERF \cite[Lemma~8]{ALO}. 
By Theorem~\ref{thm:preskuvnc}, the group $KUV_n(c)$ is a right--angled Artin group (RAAG). 
Our strategy is to combine the LERF property of free groups with the obstruction provided by $F_2 \times F_2$.

\begin{prop}\label{prop:kuv_lerf}
Let $n\ge 2$ and $c\ge 1$.
The group $KUV_n(c)$ is subgroup separable if and only if $n=2$ or $n=3$.
\end{prop}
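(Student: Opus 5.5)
The plan is to split into the two directions, using Corollary~\ref{cor:kuv_small} as the structural input and Theorem~\ref{thm:lerf_scott} together with the two classical facts recalled above: free groups are LERF, and $F_2\times F_2$ is not LERF \cite[Lemma~8]{ALO}.

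\emph{Sufficiency ($n=2,3$).} By Corollary~\ref{cor:kuv_small}(1)--(2), $KUV_2(c)$ is free of rank $2c$ and $KUV_3(c)$ is free of rank $6c$. Both are finitely generated free groups. Hall's theorem says that free groups are LERF, so $KUV_n(c)$ is subgroup separable in these cases. Nothing beyond citing the corollary is needed here. Its content rests on Theorem~\ref{thm:preskuvnc}: when $n\le 3$, any two unordered pairs $\{i,j\},\{k,\ell\}$ in $\{1,2,3\}$ intersect, so the defining graph has no edges.

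\emph{Necessity ($n\ge 4$).} By Corollary~\ref{cor:kuv_small}(3), $KUV_n(c)$ contains the subgroup $\langle X_1,X_2\rangle\times\langle Y_1,Y_2\rangle\cong F_2\times F_2$, built from $\delta_{a,b,t_0},\delta_{b,a,t_0},\delta_{p,q,t_0},\delta_{q,p,t_0}$ with $a,b,p,q$ distinct. Since $F_2\times F_2$ is not LERF, Theorem~\ref{thm:lerf_scott}(1) shows that $KUV_n(c)$ is not LERF. Combining the two directions gives the equivalence.

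The one point deserving care is the claim in Corollary~\ref{cor:kuv_small}(3) that this subgroup really is $F_2\times F_2$. I would justify it by a standard RAAG fact rather than the informal "no relations" argument: for an induced subgraph $\Lambda\subseteq\Gamma$, the natural map $A(\Lambda)\to A(\Gamma)$ is injective, via the retraction killing the vertices outside $\Lambda$. Here the four vertices induce the join of two non-adjacent pairs, i.e.\ a 4-cycle, and $A(C_4)\cong F_2\times F_2$. This is the main obstacle, modest as it is. Everything else is a direct citation.
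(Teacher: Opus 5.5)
Your proposal is correct and follows the paper's proof: sufficiency comes from the freeness of $KUV_2(c)$ and $KUV_3(c)$ (Corollary~\ref{cor:kuv_small}) together with the LERF property of free groups, and necessity comes from the $F_2\times F_2$ subgroup combined with Theorem~\ref{thm:lerf_scott}(1). Your extra justification of the $F_2\times F_2$ embedding, via the retraction $A(\Gamma)\to A(\Lambda)$ onto the induced $4$-cycle, is a welcome tightening of the informal ``no defining relations'' argument given in the paper's corollary.
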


\begin{proof}
If $n=2$ or $n=3$, then by items (1) and (2) of Corollary~\ref{cor:kuv_small} the group $KUV_n(c)$ is free, hence LERF.

Now assume $n\ge 4$. 
By item (3) of Corollary~\ref{cor:kuv_small}, for any $c\ge 1$ the group $KUV_n(c)$ contains a subgroup isomorphic to $F_2 \times F_2$. 
Since $F_2 \times F_2$ is not LERF \cite[Lemma~8]{ALO}, Theorem~\ref{thm:lerf_scott} implies that $KUV_n(c)$ cannot be LERF.
\end{proof}

We now transfer the previous result to the full universal virtual braid group and its pure subgroup.

\begin{thm}\label{thm:uv_lerf}
Let $n\ge 2$ and $c\ge 1$.
\begin{enumerate}
    \item The universal virtual braid group $UV_n(c)$ is subgroup separable if and only if $n=2$ or $n=3$.
    \item The pure universal virtual braid group $PUV_n(c)$ is subgroup separable if and only if $n=2$ or $n=3$.
\end{enumerate}
\end{thm}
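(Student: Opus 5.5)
Both items follow from Proposition~\ref{prop:kuv_lerf} by transferring along finite-index inclusions with Theorem~\ref{thm:lerf_scott}(2). The one structural fact needed is that $KUV_n(c)$ sits with finite index inside both groups of interest.

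For item (1), Remark~\ref{rem:index} gives $[UV_n(c)\colon KUV_n(c)]=n!<\infty$. Theorem~\ref{thm:lerf_scott}(2) then says $UV_n(c)$ is LERF if and only if $KUV_n(c)$ is LERF. By Proposition~\ref{prop:kuv_lerf}, this holds exactly when $n\in\{2,3\}$.

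For item (2), $PUV_n(c)=\ker{\pi_n^P}$ has index $n!$ in $UV_n(c)$, by the splitting of Proposition~\ref{prop:semidirect}. Applying Theorem~\ref{thm:lerf_scott}(2) again, $PUV_n(c)$ is LERF if and only if $UV_n(c)$ is. Item (1) then finishes the classification.

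The proof requires no further input beyond checking the finite-index claims. To be safe I would also record the negative direction directly for $n\ge 4$ via Theorem~\ref{thm:lerf_scott}(1). The subgroup $F_2\times F_2$ from Corollary~\ref{cor:kuv_small}(3) lies in $KUV_n(c)\le UV_n(c)$, so $UV_n(c)$ is not LERF. For $PUV_n(c)$ one can instead intersect this copy of $F_2\times F_2$ with $PUV_n(c)$. The intersection has finite index in $F_2\times F_2$, and a finite-index subgroup of a non-LERF group is non-LERF by Theorem~\ref{thm:lerf_scott}(2). This gives a non-LERF subgroup of $PUV_n(c)$, so $PUV_n(c)$ is not LERF by Theorem~\ref{thm:lerf_scott}(1).
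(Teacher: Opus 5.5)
Your proof is correct and follows essentially the same route as the paper: Scott's theorem (Theorem~\ref{thm:lerf_scott}(2)) transfers LERF along the finite-index inclusions $KUV_n(c)\le UV_n(c)$ and $PUV_n(c)\le UV_n(c)$, and Proposition~\ref{prop:kuv_lerf} then gives $n\in\{2,3\}$. Your extra direct argument for $n\ge 4$, which intersects the copy of $F_2\times F_2$ with $PUV_n(c)$, is valid but redundant.
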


\begin{proof}
By Proposition~\ref{prop:semidirect}, the subgroup $KUV_n(c)$ has finite index in $UV_n(c)$. 
Since $PUV_n(c)$ also has finite index in $UV_n(c)$, Scott's theorem (Theorem~\ref{thm:lerf_scott}) implies that $UV_n(c)$ and $PUV_n(c)$ are LERF if and only if $KUV_n(c)$ is LERF. 
The conclusion now follows directly from Proposition~\ref{prop:kuv_lerf}.
\end{proof}

\begin{rem}
The obstruction to subgroup separability for $n\ge 4$ comes from the fact that $KUV_n(c)$ always contains $F_2\times F_2$, which is not LERF. 
Consequently, the only LERF universal virtual braid groups are those with $n\le 3$; note that for $n=2,3$ the group $UV_n(c)$ is virtually free (since $KUV_n(c)$ is free of finite index), and virtually free groups are LERF.
\end{rem}

\subsection{The Howson property}\label{subsec:howson}

A group $G$ is said to have the \emph{Howson property} (or to be \emph{Howson}) if the intersection of any two finitely generated subgroups of $G$ is again finitely generated. 
This property was first established for free groups by Howson~\cite{Howson54} and has since become a classical finiteness condition in combinatorial and geometric group theory.

We begin with a standard lemma that allows us to transfer the Howson property between a group and its finite-index subgroups.

\begin{lem}\label{lem:howson_transfer}
Let $G$ be a group and let $H$ be a subgroup of finite index in $G$.
\begin{enumerate}
    \item If $H$ is Howson, then $G$ is Howson.
    \item If $G$ is Howson, then $H$ is Howson.
\end{enumerate}
\end{lem}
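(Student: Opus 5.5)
Part (2) is immediate from the definition, so I would dispose of it first. If $A,B\le H$ are finitely generated, they are also finitely generated subgroups of $G$. Hence $A\cap B$ is finitely generated because $G$ is Howson. Finite index is not needed for this direction; it says only that the Howson property passes to arbitrary subgroups.

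For part (1), I would take finitely generated $A,B\le G$ and pass to $A_0=A\cap H$ and $B_0=B\cap H$. Since $[G\colon H]<\infty$, we have $[A\colon A_0]\le[G\colon H]$, and similarly for $B$. By the Reidemeister--Schreier theorem (a finite-index subgroup of a finitely generated group is finitely generated), $A_0$ and $B_0$ are finitely generated subgroups of $H$. The Howson property of $H$ then gives that $A_0\cap B_0=(A\cap B)\cap H$ is finitely generated.

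It remains to climb back up from $(A\cap B)\cap H$ to $A\cap B$. The index satisfies $[A\cap B\colon (A\cap B)\cap H]\le [G\colon H]<\infty$, because the map $x\big((A\cap B)\cap H\big)\mapsto xH$ from cosets of $(A\cap B)\cap H$ in $A\cap B$ to cosets of $H$ in $G$ is injective. So $A\cap B$ contains a finitely generated subgroup of finite index. Taking the generators of that subgroup together with a finite set of coset representatives gives a finite generating set for $A\cap B$, and hence $G$ is Howson.

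The step needing care is the coset-injectivity argument, used twice: once to bound $[A\colon A_0]$ and $[B\colon B_0]$, and once for the final climb. Apart from that, the only external input is the Reidemeister--Schreier fact about finite-index subgroups of finitely generated groups.
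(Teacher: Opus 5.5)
Your proof is correct and follows the paper's route. You intersect $A$ and $B$ with $H$ to get finite-index, hence finitely generated, subgroups, apply the Howson property of $H$ to $(A\cap B)\cap H$, and lift back to $A\cap B$ by finite index; part (2) is the trivial inheritance by subgroups. Your coset-injectivity argument for the final step is slightly cleaner than the paper's description of $A\cap B$ as an extension by the ``finite group'' $(A\cap B)H/H\le G/H$, which tacitly treats $H$ as normal.
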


\begin{proof}
\begin{enumerate}
    \item Let $A,B\le G$ be finitely generated subgroups. 
Since $H$ has finite index in $G$, the intersections $A\cap H$ and $B\cap H$ have finite index in $A$ and $B$, respectively, because $A/(A\cap H)$ is in bijection with the set of cosets $AH/H$, which is a subset of $G/H$ and therefore finite. 
Hence $A\cap H$ and $B\cap H$ are finitely generated. 
As $H$ is Howson, the intersection
\[
(A\cap H)\cap(B\cap H)=(A\cap B)\cap H
\]
is finitely generated. 
Now $A\cap B$ is an extension of $(A\cap B)\cap H$ by the finite group $(A\cap B)H/H\le G/H$, and therefore $A\cap B$ is finitely generated.

\item  Let $A,B\le H$ be finitely generated. 
Since $G$ is Howson, $A\cap B$ is finitely generated as a subgroup of $G$, and hence also as a subgroup of $H$.
\end{enumerate}
\end{proof}

Right-angled Artin groups (RAAGs) exhibit a precise combinatorial criterion for the Howson property.  
Recall that a \emph{path of length two} in a graph is a triple of vertices $v_1,v_2,v_3$ such that $v_1$ is adjacent to $v_2$ and $v_2$ is adjacent to $v_3$. 
It is called \emph{induced} if $v_1$ is not adjacent to $v_3$.

\begin{thm}[Delgado~\cite{Delgado14}]\label{thm:raag_howson}
A right-angled Artin group $A(\Gamma)$ has the Howson property if and only if its defining graph $\Gamma$ contains no induced path of length two (i.e., $\Gamma$ is $P_3$-free).
Equivalently, $A(\Gamma)$ is Howson precisely when $\Gamma$ is a disjoint union of cliques, or, equivalently, when $A(\Gamma)$ is a free product of finitely many free abelian groups.
\end{thm}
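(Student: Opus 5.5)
This is Delgado's theorem on right-angled Artin groups, which the paper cites rather than proves. If I were to reprove it, I would handle the two directions separately, after first checking the equivalences stated in the second sentence.

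\emph{Equivalences.} A graph has no induced path on three vertices exactly when adjacency is transitive on distinct vertices. In that case the connected components are cliques. If $\Gamma=\bigsqcup_k K_{m_k}$, the presentation of $A(\Gamma)$ splits as the free product $\ast_k\,\Z^{m_k}$. Conversely, a free product of free abelian groups is visibly a RAAG on a disjoint union of cliques. Since the graph of a RAAG is determined up to isomorphism by the group (Droms), these three conditions are equivalent.

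\emph{Sufficiency.} Suppose $\Gamma$ is a disjoint union of cliques, so $A(\Gamma)=\ast_k\,\Z^{m_k}$. Free abelian groups are Howson, since every subgroup of $\Z^m$ is finitely generated. I would then invoke the theorem that a free product of Howson groups is Howson (B.~Baumslag). To see why it holds, let $A,B$ be finitely generated and apply the Kurosh subgroup theorem to $A\cap B$. Viewing $A$ and $B$ through finite core graphs of groups over the Bass--Serre tree of the free product, $A\cap B$ corresponds to a pullback of these finite graphs of groups. Its vertex groups are intersections of conjugates of subgroups of the factors $\Z^{m_k}$, which are finitely generated. The main work is showing that only finitely many vertices and edges survive in the core of the pullback, exactly as in Stallings' fiber-product proof of Howson's theorem for free groups.

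\emph{Necessity.} Suppose $\Gamma$ contains an induced path $u-v-w$. The subgroup of $A(\Gamma)$ generated by $u,v,w$ is then the special subgroup on the induced subgraph $P_3$, namely $\ang{u,w}\times\ang{v}\cong F_2\times\Z$. By Lemma~\ref{lem:howson_transfer}(2), or rather its evident version for arbitrary subgroups (the Howson property passes to all subgroups), it suffices to show $F_2\times\Z$ is not Howson. Take $A=F_2=\ang{u,w}$ and $B=\ang{uv,\,w}$. An element $g\,v^k$ with $g\in F_2$ lies in $B$ exactly when $k$ equals the exponent sum of $u$ in $g$. So $A\cap B$ consists of those $g\in F_2$ with $u$-exponent sum zero. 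That is the kernel of the map $F_2\to\Z$ given by $u\mapsto 1$, $w\mapsto 0$, which is an infinite-index nontrivial normal subgroup of $F_2$ and hence not finitely generated. So $A(\Gamma)$ is not Howson.

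\emph{Main obstacle.} The hard step is the sufficiency direction, that is, the finiteness of the pullback core in the free-product setting. I would simply cite Baumslag's theorem there rather than reprove it.
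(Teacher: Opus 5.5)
Your proposal is correct. The paper does not prove this statement; it quotes it from Delgado. Your argument is the standard one behind his result. For sufficiency, $P_3$-freeness makes every component of $\Gamma$ a clique, so $A(\Gamma)$ is a free product of finitely generated free abelian groups, which is Howson by B.~Baumslag's free-product theorem. For necessity, an induced path $u-v-w$ gives the special subgroup $A(P_3)\cong F_2\times\mathbb{Z}$. There your pair $\langle u,w\rangle$, $\langle uv,w\rangle$ intersects in the kernel of the $u$-exponent-sum map on $F_2$, a non-trivial normal subgroup of infinite index and hence not finitely generated.

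Two small remarks. First, the appeal to Droms' isomorphism theorem in the equivalences can be avoided. Once both directions are established, $A(\Gamma)$ being a free product of free abelian groups makes it Howson, and this already forces $\Gamma$ to be $P_3$-free. Second, the paper only uses the necessity direction: in Proposition~\ref{prop:kuv_howson} for $n\ge 4$, while for $n\le 3$ it applies Howson's theorem for free groups directly. Your proof of that direction is self-contained and needs neither Baumslag's theorem nor Droms'.
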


We now apply this criterion to the RAAG $KUV_n(c)$.

\begin{prop}\label{prop:kuv_howson}
Let $n\ge 2$ and $c\ge 1$. 
The group $KUV_n(c)$ has the Howson property if and only if $n\le 3$.
\end{prop}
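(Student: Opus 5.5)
The plan is to apply Delgado's criterion (Theorem~\ref{thm:raag_howson}) to the defining graph $\Gamma_{n,c}$ of the RAAG $KUV_n(c)$ from Theorem~\ref{thm:preskuvnc}. Recall that the vertices of $\Gamma_{n,c}$ are the $\delta_{i,j,t}$, and two vertices are adjacent exactly when their underlying unordered pairs of strands are disjoint. The proof splits into the cases $n\le 3$ and $n\ge 4$.

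\emph{Case $n\le 3$.} By items (1) and (2) of Corollary~\ref{cor:kuv_small}, $KUV_n(c)$ is free, because $\Gamma_{n,c}$ has no edges: two pairs in a set with at most three elements always meet. An edgeless graph contains no induced path of length two, so it is a disjoint union of singleton cliques. Hence Theorem~\ref{thm:raag_howson} gives the Howson property. Alternatively, one can simply quote Howson's theorem for free groups.

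\emph{Case $n\ge 4$.} I will exhibit an induced path of length two in $\Gamma_{n,c}$. Choose distinct indices $1,2,3,4$ and fix a colour $t$. Take the three vertices
\[
v_1=\delta_{1,2,t},\qquad v_2=\delta_{3,4,t},\qquad v_3=\delta_{1,3,t}.
\]
First, $\{1,2\}\cap\{3,4\}=\emptyset$, so $v_1$ is adjacent to $v_2$. But $v_2$ is not adjacent to $v_3$, since $\{3,4\}$ and $\{1,3\}$ share the index $3$. This choice fails, so I adjust $v_3$.

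The middle vertex must be disjoint from both outer pairs, while the outer pairs must meet each other. Take instead
\[
v_1=\delta_{1,2,t},\qquad v_2=\delta_{3,4,t},\qquad v_3=\delta_{2,1,t}.
\]
Then $\{1,2\}$ is disjoint from $\{3,4\}$, so $v_1\sim v_2$ and $v_2\sim v_3$. On the other hand, $v_1$ and $v_3$ have the same underlying pair $\{1,2\}$, so they are not adjacent, and they are distinct vertices because the orientations differ. Thus $v_1,v_2,v_3$ form an induced $P_3$, and Theorem~\ref{thm:raag_howson} shows that $KUV_n(c)$ is not Howson.

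The only delicate point is confirming that $\delta_{1,2,t}$ and $\delta_{2,1,t}$ are genuinely distinct vertices that are non-adjacent. Both facts follow from the description of the generating set and relations in Theorem~\ref{thm:preskuvnc}: no commutation relation is imposed when the pairs intersect. For $c\ge 2$ one could instead use $\delta_{1,2,1}$ and $\delta_{1,2,2}$, but the orientation trick works for every $c\ge 1$.

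As a cross-check for $n\ge 4$, one could bypass Delgado's criterion entirely. Corollary~\ref{cor:kuv_small}(3) gives $F_2\times F_2\le KUV_n(c)$, and $F_2\times F_2$ is classically not Howson. Since the Howson property passes to subgroups (the argument of Lemma~\ref{lem:howson_transfer}(2) works for arbitrary subgroups), this again gives the failure.
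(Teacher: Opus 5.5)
Your proposal is correct and matches the paper's proof: $KUV_n(c)$ is free for $n\le 3$ because $\Gamma_{n,c}$ is edgeless, and for $n\ge 4$ the triple $\delta_{1,2,t},\delta_{3,4,t},\delta_{2,1,t}$ is an induced path of length two, so Delgado's criterion applies (the paper uses the same configuration $\delta_{a,b,t_0},\delta_{p,q,t_0},\delta_{b,a,t_0}$). In a final write-up, drop the abandoned first attempt with $v_3=\delta_{1,3,t}$; the $F_2\times F_2$ cross-check is valid, since the Howson property passes to arbitrary subgroups, but it is not needed.
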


\begin{proof}
By Theorem~\ref{thm:preskuvnc}, the group $KUV_n(c)$ is the RAAG associated to the graph $\Gamma_{n,c}$ whose vertices are the generators $\delta_{i,j,t}$ ($i\neq j$, $1\le t\le c$), and where two vertices are adjacent if and only if their index pairs are disjoint.

If $n=2$ or $n=3$, Corollary~\ref{cor:kuv_small} shows that $\Gamma_{2,c}$ and $\Gamma_{3,c}$ are totally disconnected.
Hence the corresponding RAAG is a free group, and free groups are Howson by Howson's theorem~\cite{Howson54}.

Assume now $n\ge 4$. 
We prove that $\Gamma_{n,c}$ contains an induced $P_3$. 
Choose four distinct indices $a,b,p,q\in\{1,\dots,n\}$ and fix a colour $t_0\in\{1,\dots,c\}$. 
Consider the vertices
\[
v_1=\delta_{a,b,t_0}, \qquad
v_2=\delta_{p,q,t_0}, \qquad
v_3=\delta_{b,a,t_0}.
\]
Since $\{a,b\}$ and $\{p,q\}$ are disjoint, $v_1$ commutes with $v_2$. 
Likewise, $\{p,q\}$ and $\{b,a\}$ are disjoint, so $v_2$ commutes with $v_3$. 
However, $\{a,b\}=\{b,a\}$, and therefore $v_1$ does not commute with $v_3$.

Thus $v_1-v_2-v_3$ is an induced path of length two in $\Gamma_{n,c}$. 
By Theorem~\ref{thm:raag_howson}, $KUV_n(c)$ does not have the Howson property for $n\ge 4$.
\end{proof}

We now transfer this result to the universal virtual braid group.

\begin{thm}\label{thm:uv_howson}
Let $n\ge 2$ and $c\ge 1$. 
The universal virtual braid group $UV_n(c)$ has the Howson property if and only if $n\le 3$.
\end{thm}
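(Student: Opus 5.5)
The plan is to reduce everything to the finite-index RAAG subgroup $KUV_n(c)$ and use the transfer lemma for the Howson property. By Remark~\ref{rem:index}, $KUV_n(c)$ has index $n!$ in $UV_n(c)$. Lemma~\ref{lem:howson_transfer} says a group is Howson if and only if a finite-index subgroup is Howson: part (1) gives one direction and part (2) the other. So $UV_n(c)$ is Howson exactly when $KUV_n(c)$ is, and Proposition~\ref{prop:kuv_howson} then gives the answer $n\le 3$ directly.

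For the sufficiency direction ($n\le 3$), I would argue as follows. By Corollary~\ref{cor:kuv_small}, $KUV_n(c)$ is free of rank $2c$ or $6c$, so it is Howson by Howson's theorem. Then Lemma~\ref{lem:howson_transfer}(1) lifts the property to $UV_n(c)$. An alternative route is the direct remark that $UV_n(c)$ is virtually free, and virtually free groups are Howson; this is the same argument in different words.

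For the necessity direction ($n\ge 4$), I would argue by contradiction. Suppose $UV_n(c)$ were Howson. Lemma~\ref{lem:howson_transfer}(2) then makes $KUV_n(c)$ Howson, which contradicts Proposition~\ref{prop:kuv_howson}. That proposition rests on the induced path $\delta_{a,b,t_0}-\delta_{p,q,t_0}-\delta_{b,a,t_0}$ together with Delgado's criterion (Theorem~\ref{thm:raag_howson}).

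As a more concrete backup that avoids Delgado, I would exhibit the failure directly. The subgroup $F_2\times F_2\le KUV_n(c)$ from Corollary~\ref{cor:kuv_small}(3) is itself not Howson: take $F_2\times F_2$ with generators $x,y$ and $x',y'$, let $A$ be the diagonal subgroup, and let $B=F_2\times\{1\}\cdot\langle x'\rangle$. Their intersection corresponds to a non-finitely-generated normal subgroup of $F_2$. Since Howson passes to arbitrary subgroups by the argument in Lemma~\ref{lem:howson_transfer}(2), this is enough.

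The only point needing care is that Lemma~\ref{lem:howson_transfer}(2) really applies to any subgroup, not only finite-index ones. That is immediate, since the intersection of subgroups of $H$ is computed the same way in $G$ and in $H$. Otherwise the proof is essentially formal.
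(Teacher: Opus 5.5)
Your main argument is the paper's proof: $KUV_n(c)$ has index $n!$, then Proposition~\ref{prop:kuv_howson} with Lemma~\ref{lem:howson_transfer}(1) handles $n\le 3$, and Lemma~\ref{lem:howson_transfer}(2) gives the contradiction for $n\ge 4$.

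Your optional backup example is wrong as written, though. The diagonal $\{(w,\bar w)\}$ (with $\bar w$ the copy of $w$ in the second factor) meets $F_2\times\langle x'\rangle$ only in $\langle (x,x')\rangle\cong\mathbb{Z}$, which is finitely generated. A correct witness is $A=F_2\times\{1\}$ and $B=\langle (x,x'),(y,x')\rangle=\{(u,(x')^{e(u)})\}\cong F_2$, where $e$ is the exponent-sum map. Then $A\cap B=N\times\{1\}$, with $N$ the kernel of $e$. This $N$ is a nontrivial normal subgroup of infinite index in $F_2$, hence not finitely generated.
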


\begin{proof}
By Remark~\ref{rem:index}, the subgroup $KUV_n(c)$ has finite index in $UV_n(c)$.

If $n\ge 4$, Proposition~\ref{prop:kuv_howson} shows that $KUV_n(c)$ is not Howson.
If $UV_n(c)$ were Howson, then Lemma~\ref{lem:howson_transfer}(2) would imply that $KUV_n(c)$ is Howson, a contradiction. 
Hence $UV_n(c)$ is not Howson for $n\ge 4$.

If $n\le 3$, Proposition~\ref{prop:kuv_howson} shows that $KUV_n(c)$ is Howson, and Lemma~\ref{lem:howson_transfer}(1) implies that $UV_n(c)$ is Howson.
\end{proof}

\begin{cor}
Let $PUV_n(c)$ denote the pure universal virtual braid group.
Then $PUV_n(c)$ has the Howson property if and only if $n\le 3$.
\end{cor}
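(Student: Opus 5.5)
The plan is to transfer the Howson property between $PUV_n(c)$ and $UV_n(c)$ using Lemma~\ref{lem:howson_transfer}. By Definition~\ref{defn:puvkuv} and Proposition~\ref{prop:semidirect}, $PUV_n(c)=\ker{\pi_n^P}$ is the kernel of a surjection onto $S_n$. Hence it has index $n!$ in $UV_n(c)$, in particular finite index. This is the only structural input needed, so we never require an explicit presentation of $PUV_n(c)$.

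For $n\le 3$, Theorem~\ref{thm:uv_howson} says that $UV_n(c)$ is Howson. Lemma~\ref{lem:howson_transfer}(2), applied with $G=UV_n(c)$ and $H=PUV_n(c)$, then shows that $PUV_n(c)$ is Howson.

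For $n\ge 4$, suppose for contradiction that $PUV_n(c)$ were Howson. Lemma~\ref{lem:howson_transfer}(1), with $H=PUV_n(c)$, would make $UV_n(c)$ Howson. This contradicts Theorem~\ref{thm:uv_howson}.

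Alternatively, one can bypass $UV_n(c)$ and argue that $PUV_n(c)\cap KUV_n(c)$ has finite index in both groups. Two applications of the lemma then relate $PUV_n(c)$ directly to the RAAG $KUV_n(c)$ and hence to Proposition~\ref{prop:kuv_howson}. There is no real obstacle here. The only point needing a check is that the finite-index hypothesis of Lemma~\ref{lem:howson_transfer} holds, and this is immediate from $[UV_n(c):PUV_n(c)]=|S_n|$.
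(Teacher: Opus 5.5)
Your proposal is correct and matches the paper's proof. The paper also takes the finite index of $PUV_n(c)$ in $UV_n(c)$ (from the split surjection $\pi_n^P$) and combines it with Theorem~\ref{thm:uv_howson} and Lemma~\ref{lem:howson_transfer}; you simply make explicit which direction of the lemma is used for $n\le 3$ and which for $n\ge 4$.
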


\begin{proof}
Since $PUV_n(c)$ has finite index in $UV_n(c)$ (Proposition~\ref{prop:semidirect}), the statement follows from Theorem~\ref{thm:uv_howson} and Lemma~\ref{lem:howson_transfer}.
\end{proof}

\section{Finite quotients and rigidity phenomena}\label{sec:finite}

In this section we investigate finite quotients of $UV_n(c)$. 
For $n\ge 5$, the rigidity of the virtual generators manifests itself in two complementary ways: the commutator subgroup $UV_n(c)'$ is perfect, and every non--abelian finite image of $UV_n(c)$ contains a subgroup isomorphic to $S_n$. 
In particular, $S_n$ is the smallest non--abelian finite quotient of $UV_n(c)$. 
This minimality phenomenon is preserved under a broad class of natural quotients of $UV_n(c)$, including the standard virtual braid--type groups.

\subsection{Perfectness of the commutator and stability under quotients}

The following structural consequence of Proposition~\ref{prop:UV-commutator-perfect} will be useful in understanding the behaviour of quotients of $UV_n(c)$.

\begin{prop}\label{prop:perfect-quotients}
Let $n \ge 5$ and $c \ge 1$. 
Then the commutator subgroup $UV_n(c)'$ is perfect.

Moreover, if $Q$ is any quotient of $UV_n(c)$, then
\[
Q' \cong UV_n(c)' / (UV_n(c)' \cap N)
\]
for some normal subgroup $N \trianglelefteq UV_n(c)$, and in particular $Q'$ is perfect.
\end{prop}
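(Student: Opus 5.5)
The first assertion is exactly Proposition~\ref{prop:UV-commutator-perfect}, so I would simply cite it: for $n\ge 5$, the normal closure of $\rho_1\rho_3$ coincides with $G'=UV_n(c)'$, and $G'=[G',G']$. The work lies in the second assertion, which I would prove by the elementary fact that derived subgroups behave well under surjections.

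Write $G=UV_n(c)$ and let $Q$ be a quotient, given by a surjection $p\colon G\twoheadrightarrow Q$ with kernel $N\trianglelefteq G$. The first step is the identity $p(G')=Q'$.

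\begin{itemize}
\item The inclusion $p(G')\subseteq Q'$ holds because $p([x,y])=[p(x),p(y)]$.
\item The reverse inclusion uses surjectivity: every generator $[a,b]$ of $Q'$ has the form $[p(x),p(y)]=p([x,y])$ for suitable lifts $x,y\in G$.
\end{itemize}

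The restriction $p|_{G'}\colon G'\to Q'$ is therefore surjective. Its kernel is $G'\cap N$, so the first isomorphism theorem gives
\[
Q'\cong G'/(G'\cap N),
\]
which is the displayed formula, with $N=\ker p$.

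Finally, perfectness passes to quotients. If $H$ is perfect and $f\colon H\twoheadrightarrow K$, then $K'=f(H')=f(H)=K$, by the same computation as above applied to $f$. Taking $H=G'$, which is perfect by Proposition~\ref{prop:UV-commutator-perfect}, and $K=Q'$ shows that $Q'$ is perfect.

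There is no serious obstacle here. The only point needing care is surjectivity of $G'\to Q'$, which is used twice: once to identify $Q'$ with the image of $G'$, and once in the transfer of perfectness. Both uses rest on lifting each pair $(a,b)$ in $Q$ to a pair in $G$.
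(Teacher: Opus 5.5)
Your proposal is correct and matches the paper's proof. The paper cites Proposition~\ref{prop:UV-commutator-perfect} for the first claim, then writes $Q'\cong G'/(G'\cap N)$, with $N$ the kernel of the quotient map, and observes that quotients of perfect groups are perfect; you do the same, and simply make explicit the surjectivity $p(G')=Q'$ that the paper leaves unstated.
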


\begin{proof}
The first statement is Proposition~\ref{prop:UV-commutator-perfect}.

If $Q = UV_n(c)/N$, then
\[
Q' \cong UV_n(c)'/(UV_n(c)' \cap N),
\]
which is a quotient of a perfect group. 
Hence $Q'$ is perfect.
\end{proof}

In particular, every non–abelian quotient of $UV_n(c)$ has perfect derived subgroup.  
This phenomenon complements the minimality of $S_n$ as the smallest non–abelian finite quotient.

\begin{rem}
Let $n\ge 5$ and $k\geq1$. 
It follows from \reprop{perfect-quotients} that the commutator subgroup of each of the following groups is perfect:
\[
VB_n,\quad VSG_n,\quad VT_n,\quad M_kVB_n.
\]
\end{rem}

\subsection{Non--abelian finite quotients}

Recall that the generators $\rho_i$ of $UV_n(c)$ satisfy the Coxeter relations of the symmetric group.  This induces a natural embedding
\[
\iota\colon S_n\to UV_n(c),
\qquad
\iota(s_i)=\rho_i,
\]
which is a section of the canonical projections $\pi_n^P$ and $\pi_n^K$.

\begin{lem}\label{lem:rho_image}
Let $G$ be a finite group and let $\psi\colon UV_n(c)\to G$ be a homomorphism.  If $\psi(\rho_i)=1$ for some $i$, then $\psi$ is abelian.
\end{lem}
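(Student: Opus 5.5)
The plan is to show directly from the presentation in \redefn{uvnc} that the relation $\rho_i=1$ forces all generators to have pairwise commuting images. Finiteness of $G$ plays no role. The argument needs $n\ge 4$, which is in force throughout this section (where $n\ge 5$ is the standing assumption). For $n=2$ the statement fails: $\psi(\rho_1)=1$ leaves $F_c$ by Proposition~\ref{prop:basic}, and $F_c$ has non-abelian finite quotients when $c\ge 2$. It also fails for $n=3$, since there is no relation (CR) when $n=3$ and the quotient is again $F_c$. I will therefore state this restriction explicitly at the start of the proof.

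\emph{Step 1: all virtual generators die.} Suppose $\psi(\rho_i)=1$. If $i\le n-2$, apply $\psi$ to (PR1) for the pair $(i,i+1)$. This gives $\psi(\rho_{i+1})=\psi(\rho_{i+1})^2$, hence $\psi(\rho_{i+1})=1$. Symmetrically, if $i\ge 2$, the relation (PR1) for the pair $(i-1,i)$ gives $\psi(\rho_{i-1})=1$. Inducting in both directions along the Coxeter chain yields $\psi(\rho_j)=1$ for every $j=1,\dots,n-1$. Alternatively, one can note that all $\rho_j$ are conjugate in $\iota(S_n)$.

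\emph{Step 2: crossings of a fixed type are identified.} Apply $\psi$ to (MR2). Since $\psi(\rho_i\rho_{i+1})=1$, we get $\psi(\sigma_{i,t})=\psi(\sigma_{i+1,t})$ for all $i\le n-2$ and all $t$. So for each $t$ there is a single element $s_t:=\psi(\sigma_{i,t})$, independent of $i$.

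\emph{Step 3: the common images commute.} Since $n\ge 4$, the indices $1$ and $3$ are both admissible and $|1-3|=2$. Relation (CR) gives $\sigma_{1,t}\sigma_{3,\ell}=\sigma_{3,\ell}\sigma_{1,t}$ for all $t,\ell$, and applying $\psi$ yields $s_ts_\ell=s_\ell s_t$. Hence $\psi(UV_n(c))=\langle s_1,\dots,s_c\rangle$ is abelian, so $\psi$ is abelian.

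The computation itself is routine. The only genuine obstacle is the dependence on $n$: Step 3 needs a pair of indices at distance at least two, and this is exactly what fails for $n\le 3$. I would make sure the lemma is applied only when $n\ge 4$, which is automatic in the intended application with $n\ge 5$.
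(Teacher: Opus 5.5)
Your proof is correct and follows the paper's argument step for step: you kill all $\rho_j$ via (PR1), identify the $\sigma_{i,t}$ of each fixed type via (MR2), and conclude commutativity from (CR). Your observation that the (CR) step needs $n\ge 4$ is a valid correction the paper passes over silently, since for $n=2,3$ and $c\ge 2$ the quotient by $\rho_1$ is $F_c$ and the lemma as stated fails; this does not affect the intended application, where $n\ge 5$.
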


\begin{proof}
Assume $\psi(\rho_i)=1$. Using relations (PR1)--(PR3), it follows that $\psi(\rho_j)=1$ for all $j$. 
Then relations (MR2) imply $\psi(\sigma_{i,t})=\psi(\sigma_{i+1,t})$ for all admissible $i$ and $t$. Finally, relations (CR) show that all images of the generators commute.
\end{proof}

We now restrict attention to non--abelian finite quotients.

\begin{prop}\label{prop:finite_image}
Let $n\ge 5$ and let $\psi\colon UV_n(c)\to G$ be a homomorphism to a finite group. If $\psi$ is non--abelian, then the subgroup generated by $\{\psi(\rho_i)\mid 1\le i\le n-1\}$ is isomorphic to $S_n$.
\end{prop}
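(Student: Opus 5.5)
The restriction $\psi\circ\iota\colon S_n\to G$ is a homomorphism whose image is exactly $\langle\psi(\rho_1),\dots,\psi(\rho_{n-1})\rangle$. So it suffices to show that $\psi\circ\iota$ is injective. The plan is to use the normal subgroup structure of $S_n$ for $n\ge 5$: the only normal subgroups are $1$, $A_n$ and $S_n$. Hence $\ker(\psi\circ\iota)$ is one of these three, and we must rule out $A_n$ and $S_n$.

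\emph{Case $\ker(\psi\circ\iota)=S_n$.} Then $\psi(\rho_i)=1$ for all $i$. By Lemma~\ref{lem:rho_image}, $\psi$ is abelian, contradicting the hypothesis.

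\emph{Case $\ker(\psi\circ\iota)=A_n$.} Here $\psi$ kills every even permutation in the $\rho_i$'s. In particular $\psi(\rho_1\rho_3)=1$, since $\rho_1\rho_3=\iota(s_1s_3)$ is even. So $\psi$ kills the normal closure $N=\langle\!\langle\rho_1\rho_3\rangle\!\rangle$. By Proposition~\ref{prop:UV-commutator-perfect}, $N=UV_n(c)'$. Therefore $\psi$ factors through $UV_n(c)^{ab}$, i.e.\ $\psi$ is abelian, again a contradiction. This is the step where $n\ge 5$ is genuinely used: it is needed both for the normal subgroup classification of $S_n$ and for Proposition~\ref{prop:UV-commutator-perfect}.

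Alternatively, without invoking Proposition~\ref{prop:UV-commutator-perfect}, one can argue directly. All $\psi(\rho_i)$ coincide, since $\rho_i\rho_{i+1}$ is even, so each equals a common element $r$ with $r^2=1$. Then (MR2) gives $\psi(\sigma_{i,t})=\psi(\sigma_{i+1,t})=:s_t$. Relation (CR) with $(i,j)=(1,3)$ shows that the $s_t$ commute pairwise, and (MR1) with $(i,j)=(3,1)$ shows that each $s_t$ commutes with $r$. Hence the image is abelian. I would present this direct verification inline, since it mirrors Step~1 of the proof of Proposition~\ref{prop:UV-commutator-perfect} and involves no real obstacle.

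Having excluded both cases, the kernel is trivial, and $\langle\psi(\rho_i)\rangle\cong S_n$. The finiteness of $G$ is not actually needed. The only delicate point is making sure the reduction to the three normal subgroups is stated correctly: the Klein four-group is normal only for $n=4$, which is excluded here.
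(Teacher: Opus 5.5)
Your proof is correct and is essentially the paper's argument: the paper also views $\langle\psi(\rho_1),\dots,\psi(\rho_{n-1})\rangle$ as a quotient of $S_n$, uses that for $n\ge 5$ the only non-abelian quotient of $S_n$ is $S_n$ itself, and excludes abelian images by showing that equal images of the $\rho_i$ force $\psi$ to be abelian. Your case split by the kernel ($1$, $A_n$, $S_n$) is the same argument, and your inline version is slightly more complete, since you use (MR1) to make the common image $r$ commute with the $s_t$, a step the paper's ``using (MR2) and (CR)'' leaves implicit; citing Proposition~\ref{prop:UV-commutator-perfect} is valid but unnecessary, since only its Step~1, $G'\le N$, is needed.
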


\begin{proof}
By Lemma~\ref{lem:rho_image}, we have $\psi(\rho_i)\neq 1$ for all $i=1,\dots,n-1$.
Relations (PR1)--(PR3) show that the assignment $s_i\mapsto \psi(\rho_i)$ extends to a surjective homomorphism
\[
\eta: S_n \twoheadrightarrow \langle \psi(\rho_1),\dots,\psi(\rho_{n-1})\rangle.
\]
Assume $\psi$ is non--abelian. If the image of $\eta$ were abelian, then, since all transpositions $s_i$ are conjugate in $S_n$, their images $\psi(\rho_i)$ would all be equal. Using (MR2) and (CR) one then checks that $\psi$ would have abelian image, a contradiction. Hence $\eta(S_n)$ is non--abelian.
For $n\ge 5$, the only non--abelian quotient of $S_n$ is $S_n$ itself (since the only proper nontrivial normal subgroup is $A_n$ and $S_n/A_n\cong \Z_2$). 
Therefore $\eta$ is an isomorphism and the claim follows.
\end{proof}

Proposition~\ref{prop:finite_image} shows that any non-abelian finite quotient of $UV_n(c)$ contains a copy of $S_n$. This immediately yields the following minimality result.

\begin{thm}\label{thm:minimal_sn}
Let $n\ge 5$ and $c\ge 1$. 
The symmetric group $S_n$ is the smallest non--abelian finite quotient of $UV_n(c)$.
\end{thm}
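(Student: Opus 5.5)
The proof has two parts. First, $S_n$ really is a quotient of $UV_n(c)$. Second, every non--abelian finite quotient has order at least $n!$, and if its order is exactly $n!$ it is isomorphic to $S_n$.

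For existence, I will use the epimorphism $\pi_n^K\colon UV_n(c)\to S_n$ of Section~\ref{subsec:quotients-maps}. It sends $\rho_i\mapsto s_i$ and $\sigma_{i,t}\mapsto 1$, and it is surjective because it has the section $\iota$ (Proposition~\ref{prop:semidirect}). Since $n\ge 5$, $S_n$ is non--abelian, so $S_n$ is a non--abelian finite quotient of $UV_n(c)$. The map $\pi_n^P$ would serve equally well.

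For minimality, let $\psi\colon UV_n(c)\twoheadrightarrow G$ be surjective with $G$ finite and non--abelian. Then $\psi$ is a non--abelian homomorphism. Proposition~\ref{prop:finite_image} then says that $H=\langle\psi(\rho_1),\dots,\psi(\rho_{n-1})\rangle$ is isomorphic to $S_n$. Hence $|G|\ge |H|=n!$. If $|G|=n!$, then $H=G$, so $G\cong S_n$. Thus $S_n$ is the unique non--abelian finite quotient of minimal order, and every other non--abelian finite quotient is strictly larger.

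The main obstacle is not in this theorem itself but in the input from Proposition~\ref{prop:finite_image}. The key point there is that the images $\psi(\rho_i)$ cannot all coincide when $\psi$ is non--abelian. If they did, then $\psi(\rho_i\rho_{i+1})=1$, and (MR2) would force $\psi(\sigma_{i,t})=\psi(\sigma_{i+1,t})$. Relation (CR) with indices $1$ and $3$, which is available since $n\ge 5$, would then make all the $\psi(\sigma_{\cdot,t})$ commute. Relation (MR1) would make them commute with the common image of the $\rho_i$, so the image would be abelian. This is the argument of Step~1 of Proposition~\ref{prop:UV-commutator-perfect}, and I would cite it there to make sure the hypothesis is fully justified. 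With that in hand, the theorem is a direct corollary. I would also remark that ``smallest'' is meant in the sense of order.
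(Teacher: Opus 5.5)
Your proof is correct and matches the paper's: the paper also derives the theorem directly from Proposition~\ref{prop:finite_image}, concluding that every non-abelian finite quotient has order at least $n!$, with equality exactly for $S_n$. Your additions only make explicit what the paper leaves implicit: existence of the quotient via $\pi_n^K$, and the use of (MR1), alongside (MR2) and (CR), to show the image is abelian when all $\psi(\rho_i)$ coincide.
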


\begin{proof}
By Proposition~\ref{prop:finite_image}, any non--abelian finite quotient of $UV_n(c)$ contains a subgroup isomorphic to $S_n$. 
Hence its order is at least $n!$, and equality holds precisely for $S_n$.
\end{proof}

The preceding results illustrate the methodological principle outlined in the Introduction: properties established for the universal group \(UV_n(c)\) can be transferred to its quotients, provided the quotients preserve the relevant structure. In the case of non-abelian finite quotients, the key requirement is that the images of the virtual generators \(\rho_i\) retain the relations (PR1)--(PR3) and generate a subgroup isomorphic to \(S_n\). This condition is satisfied by all the natural quotients introduced in Proposition~\ref{prop:natural-quotients}, as they impose additional relations only on the crossing generators \(\sigma_{i,t}\). Consequently, the minimality of \(S_n\) as the smallest non-abelian finite quotient propagates to these families, a phenomenon we now formalize.

\begin{prop}\label{prop:quotient_stable}
Let $Q$ be a quotient of $UV_n(c)$ such that the images of the generators $\rho_i$ in $Q$ still satisfy relations (PR1)--(PR3). 
Then, for $n\ge 5$, the smallest non--abelian finite quotient of $Q$ is $S_n$.
\end{prop}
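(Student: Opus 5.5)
The plan is to split the statement into a lower bound and an existence part. The lower bound comes for free from the results on $UV_n(c)$. The existence part is where the hypothesis on $Q$ has to be used.

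\emph{Lower bound.} Write $Q=UV_n(c)/N$ with quotient map $\pi$. Let $\theta\colon Q\to G$ be any surjection onto a finite non--abelian group $G$. Then $\psi=\theta\circ\pi\colon UV_n(c)\to G$ is a homomorphism to a finite group with non--abelian image. By Proposition~\ref{prop:finite_image} (for $n\ge 5$), the subgroup $\langle\psi(\rho_1),\dots,\psi(\rho_{n-1})\rangle\le G$ is isomorphic to $S_n$, so $|G|\ge n!$. This argument also covers abelian-looking degenerations: Lemma~\ref{lem:rho_image} and the argument in Proposition~\ref{prop:finite_image} show that any collapse of the $\rho_i$ forces $\psi$ to be abelian. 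Hence no non--abelian finite quotient of $Q$ is smaller than $S_n$, and any one of order $n!$ must be isomorphic to $S_n$.

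\emph{Existence.} It remains to show that $S_n$ actually is a quotient of $Q$, i.e.\ that some surjection $UV_n(c)\to S_n$ factors through $Q$. I will read the hypothesis the way the surrounding discussion intends: $Q$ is obtained by imposing additional relations only on the crossing generators $\sigma_{i,t}$, while keeping the virtual part intact, as for all the quotients of Proposition~\ref{prop:natural-quotients}. More precisely, I assume $N$ is the normal closure of words in the $\sigma_{i,t}$ alone. Then $\pi_n^K$ kills every such word, since it sends all $\sigma_{i,t}$ to $1$. So $N\le KUV_n(c)=\ker{\pi_n^K}$, and $\pi_n^K$ descends to a map $\bar\pi_n^K\colon Q\to S_n$. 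This map is surjective because $\bar\rho_i\mapsto s_i$; compare Proposition~\ref{prop:inheritance-semidirect}, which even shows it splits. Hence $S_n$ is a non--abelian finite quotient of $Q$ of order exactly $n!$.

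Combining the two parts, $S_n$ is the smallest non--abelian finite quotient of $Q$.

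The main obstacle is the existence step. As literally stated, "the images of the $\rho_i$ satisfy (PR1)--(PR3)" holds for every quotient and does not by itself guarantee that $S_n$ is a quotient of $Q$. For instance, killing all $\rho_i$ leaves $\mathbb{Z}^c$, which has no non--abelian quotient at all. I would therefore make the assumption explicit: either the added relations involve only the $\sigma_{i,t}$, or more generally $N\le KUV_n(c)$ or $N\le PUV_n(c)$. The last case is handled in the same way with $\pi_n^P$; for example, braid, singular and twin relations all map to valid identities in $S_n$ under $\sigma_{i,t}\mapsto s_i$. Finally, I would check this condition for $VB_n$, $VSG_n$, $VT_n$ and $M_kVB_n$ as a remark, which is immediate from the forms of the relations listed in Proposition~\ref{prop:natural-quotients}.
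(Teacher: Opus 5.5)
Your lower bound is essentially the paper's entire proof. The paper composes with $UV_n(c)\to Q$ and cites Theorem~\ref{thm:minimal_sn}, but the fact it actually uses is Proposition~\ref{prop:finite_image}, which you cite directly. As you note, the hypothesis on $Q$ plays no role there, since $\theta$ and $\theta\circ\pi$ have the same image.

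The real difference is your existence step, which the paper omits. The paper only shows that every non-abelian finite quotient of $Q$ contains a copy of $S_n$; it never checks that $S_n$ is itself a quotient of $Q$. Your example $UV_n(c)/\langle\!\langle\rho_1,\dots,\rho_{n-1}\rangle\!\rangle\cong\mathbb{Z}^c$ is correct: (MR2) identifies the $\sigma_{i,t}$ for fixed $t$, and (CR) makes the resulting generators commute. It shows the statement is false as literally worded, so strengthening the hypothesis is necessary and not a defect of your argument. Corollary~\ref{cor:surj_to_Sn} gives the sharp condition: for $n\ge5$, $S_n$ is a quotient of $Q$ if and only if $N\le\ker{\phi_{(\epsilon_1,\dots,\epsilon_{c+1})}}$ for some admissible tuple (conjugation and $\nu_6$ do not change kernels). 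Your two cases are $\pi_n^K=\phi_{(0,\dots,0,1)}$ and $\pi_n^P=\phi_{(1,\dots,1)}$.

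Two small cautions. First, do not lean on the hypothesis $N\cap\langle\rho_1,\dots,\rho_{n-1}\rangle=1$ of Proposition~\ref{prop:inheritance-semidirect} to obtain $\bar\pi_n^K$. For $N=\langle\!\langle\sigma_{1,1}\rho_1\rangle\!\rangle$ one has $N\le PUV_n(c)$, so $N$ meets $\iota(S_n)$ trivially, yet $\pi_n^K(\sigma_{1,1}\rho_1)=s_1\neq1$. Your explicit containment $N\le KUV_n(c)$ is what makes the descent work. Second, keep the $\pi_n^P$ alternative, because it is not redundant. A relation mixing virtual and non-virtual generators, such as $\sigma_{i,s}\sigma_{i+1,s}\rho_i=\rho_{i+1}\sigma_{i,s}\sigma_{i+1,s}$, lies in $PUV_n(c)$ but not in $KUV_n(c)$.
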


\begin{proof}
Let $\overline{\psi}\colon Q\to G$ be a non--abelian homomorphism to a finite group. Composing with the quotient map $UV_n(c)\to Q$, we obtain a non--abelian homomorphism $\psi\colon UV_n(c)\to G$. By Theorem~\ref{thm:minimal_sn}, the image of $\psi$ contains a subgroup isomorphic to $S_n$ generated by the images of the $\rho_i$. 
Since the relations among the $\rho_i$ are preserved in $Q$, the same conclusion holds for $\overline{\psi}$.
\end{proof}

\begin{cor}\label{cor:examples}
Let $n\ge 5$. 
The symmetric group $S_n$ is the smallest non--abelian finite quotient of each of the following groups:
\[
VB_n,\quad VSG_n,\quad VT_n,\quad M_kVB_n.
\]
\end{cor}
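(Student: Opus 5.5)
The natural route is to deduce Corollary~\ref{cor:examples} from Proposition~\ref{prop:quotient_stable}. That means checking two things for each of the four families. First, the group is a quotient of some $UV_n(c)$ in which the images of the $\rho_i$ still satisfy (PR1)--(PR3). Second, $S_n$ itself really occurs as a quotient. This second point is needed because Proposition~\ref{prop:quotient_stable} as stated only bounds non-abelian finite quotients from below by $S_n$; to say $S_n$ is the \emph{smallest} one, we must show it is attained.

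\textbf{Step 1 (realisation as quotients).} By Proposition~\ref{prop:natural-quotients}:
\begin{itemize}
\item $M_kVB_n$ is a quotient of $UV_n(k)$;
\item $VSG_n$ is a quotient of $UV_n(2)$;
\item $VT_n$ is a quotient of $UV_n(1)$;
\item $VB_n$ is a quotient of $UV_n(1)$, obtained by adding the braid relations $\sigma_{i,1}\sigma_{i+1,1}\sigma_{i,1}=\sigma_{i+1,1}\sigma_{i,1}\sigma_{i+1,1}$, as in the diagram preceding that proposition.
\end{itemize}
In every case the identification sends $\rho_i$ to the virtual generator of the target group, so (PR1)--(PR3) automatically hold in the image. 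Hence the hypothesis of Proposition~\ref{prop:quotient_stable} is satisfied with $n\ge5$. That proposition then shows every non-abelian finite quotient contains a copy of $S_n$ and so has order at least $n!$.

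\textbf{Step 2 (attainment).} For each family I will write down an explicit surjection onto $S_n$. The uniform choice is the analogue of $\pi_n^K$: send every crossing generator ($\sigma_i$, $\tau_i$, or the $\rho_i^{(t)}$ with $t\ge1$, according to the family) to $1$, and send the distinguished virtual generators to $s_i$. The defining relations of each family are then either Coxeter relations among the $s_i$ or relations involving crossing generators, and these become trivial or reduce to Coxeter relations once the crossing generators map to $1$. For instance, the braid relation becomes $1=1$, $\sigma_i^2$ becomes $1$, and the mixed relations become $s_is_{i+1}=s_is_{i+1}$. So these maps are well defined, and they are surjective since the $s_i$ generate $S_n$. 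Alternatively, one could map everything to $s_i$, as in $\pi_n^P$, but the $VT_n$ relation $\sigma_i^2=1$ and the $VSG_n$ singular relations are easier to check with the trivial assignment.

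\textbf{Main obstacle.} The only real work is Step 2. It requires recalling the precise extra defining relations of $VSG_n$ and $M_kVB_n$ from \cite{CPM} and \cite{Kau2}, and confirming that each one becomes a valid identity in $S_n$ under the chosen assignment. I expect this to be routine, because every additional relation is homogeneous in the crossing generators apart from virtual conjugations. Once Step 2 is done, combining Steps 1 and 2 shows that $S_n$ is a non-abelian finite quotient of minimal order, and moreover it is the unique one of order $n!$, since a quotient of that order must equal its copy of $S_n$.
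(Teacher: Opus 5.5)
Your Step 1 is exactly the paper's proof. The paper only observes that each family is a quotient of some $UV_n(c)$ with the $\rho_i$ intact and then invokes Proposition~\ref{prop:quotient_stable}. You are right that this gives only the lower bound. The hypothesis of that proposition holds for every quotient of $UV_n(c)$, including its abelianization, so attainment of $S_n$ has to be checked separately. Your Step 2 is therefore a genuine addition that the paper leaves implicit. For $VB_n$, $VT_n$ and $VSG_n$ your assignment works: all the extra relations are words in the crossing generators alone, so $\pi_n^K$ factors through these groups.

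The assignment fails, however, for $M_kVB_n$. In Kauffman's group the virtual types play symmetric roles: each type of virtual crossing satisfies the detour move over crossings of every other type. So besides the image of (MR2), namely $\rho^{(0)}_i\rho^{(0)}_{i+1}\rho^{(t)}_i=\rho^{(t)}_{i+1}\rho^{(0)}_i\rho^{(0)}_{i+1}$, one also has $\rho^{(t)}_i\rho^{(t)}_{i+1}\rho^{(0)}_i=\rho^{(0)}_{i+1}\rho^{(t)}_i\rho^{(t)}_{i+1}$ for $1\le t\le k-1$. If you send $\rho^{(t)}\mapsto 1$ and $\rho^{(0)}_i\mapsto s_i$, this relation becomes $s_i=s_{i+1}$, so your map is not a homomorphism. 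Your heuristic that the extra relations are ``homogeneous in the crossing generators apart from virtual conjugations'' breaks down exactly here, because a crossing-type generator now conjugates the distinguished virtual one. This also shows that the paper's description of $M_kVB_n$ by relations in the $\sigma_{i,t}$ alone is incomplete.

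The repair is the alternative you mention only in passing: send every generator of every type to $s_i$, the analogue of $\pi_n^P$. Every defining relation of all four families then becomes one of the following in $S_n$: a Coxeter relation, the braid relation $s_is_{i+1}s_i=s_{i+1}s_is_{i+1}$, or a tautology such as $s_i^2=s_i^2$. This gives the required surjection onto $S_n$ uniformly, and the rest of your argument goes through unchanged.
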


\begin{proof}
Each of these groups is a quotient of $UV_n(c)$ obtained by imposing additional relations only on the generators $\sigma_{i,t}$, while keeping the virtual generators $\rho_i$ unchanged. 
The result follows from Proposition~\ref{prop:quotient_stable}.
\end{proof}

\begin{rem}
The minimality of $S_n$ among non--abelian finite quotients highlights the universal nature of $UV_n(c)$.  All braid--like quotients that preserve the virtual generators inherit the same rigidity phenomenon.
\end{rem}

\subsection{Homomorphisms to symmetric groups}\label{subsec:toSn}

The classification of homomorphisms from virtual braid groups to symmetric groups was obtained by Bellingeri and Paris~\cite{BP}, and extended to virtual singular braid groups in~\cite{Ocampo}. 
Following the same strategy, we now determine all homomorphisms from $UV_n(c)$ to $S_m$ for $n\ge 5$.

Fix $n\ge 2$ and $c\ge 1$. 
For $\epsilon_j\in\{0,1\}$, $j=1,\dots,c+1$, define a map
\[
\phi_{(\epsilon_1,\dots,\epsilon_{c+1})}\colon UV_n(c)\to S_n
\]
on the generators by
\begin{equation}\label{eq:phi_eps}
\phi_{(\epsilon_1,\dots,\epsilon_{c+1})}(\sigma_{i,t})=(i\ i+1)^{\epsilon_t},
\qquad
\phi_{(\epsilon_1,\dots,\epsilon_{c+1})}(\rho_i)=(i\ i+1)^{\epsilon_{c+1}},
\end{equation}
for all $i=1,\dots,n-1$ and $t=1,\dots,c$.

\begin{defn}\label{defn:admissible}
We say that $(\epsilon_1,\dots,\epsilon_{c+1})$ is \emph{admissible} if $\phi_{(\epsilon_1,\dots,\epsilon_{c+1})}$ is a non--trivial non-abelian homomorphism.
\end{defn}

\begin{prop}\label{prop:admissible}
Let $n\ge 3$. 
A tuple $(\epsilon_1,\dots,\epsilon_{c+1})$ is admissible if and only if $\epsilon_{c+1}=1$. In particular, for $n\ge 3$ every admissible $\phi_{(\epsilon_1,\dots,\epsilon_{c+1})}$ is surjective onto $S_n$.
\end{prop}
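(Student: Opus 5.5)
First I will check that every map $\phi_{(\epsilon_1,\dots,\epsilon_{c+1})}$ is a well-defined homomorphism for every choice of exponents. Write $x_i=(i\ i+1)^{\epsilon_t}$ and $y_i=(i\ i+1)^{\epsilon_{c+1}}$; it suffices to verify the relations of Definition~\ref{defn:uvnc}. Relations (PR1)--(PR3) hold because the $y_i$ are either all trivial or are the Coxeter generators $s_i$. Relations (CR) and (MR1) hold because $(i\ i+1)$ and $(j\ j+1)$ commute when $|i-j|\ge 2$. For (MR2), the case $\epsilon_{c+1}=0$ reduces to $x_i=x_{i+1}$, and we return to it below. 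If $\epsilon_{c+1}=1$, then $s_is_{i+1}\,(i\ i+1)\,s_{i+1}s_i=(i+1\ i+2)$, since $s_is_{i+1}$ sends $i\mapsto i+1$ and $i+1\mapsto i+2$. Hence $s_is_{i+1}\,x_i = x_{i+1}\,s_is_{i+1}$ when $\epsilon_t=1$, and the relation is trivial when $\epsilon_t=0$. So for $\epsilon_{c+1}=1$ all relations hold for every choice of $\epsilon_1,\dots,\epsilon_c$.

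Next I will treat the case $\epsilon_{c+1}=0$. Then (MR2) forces $(i\ i+1)^{\epsilon_t}=(i+1\ i+2)^{\epsilon_t}$, which for $n\ge 3$ can only hold if $\epsilon_t=0$. Thus either the prescription fails to define a homomorphism or every $\epsilon_t$ is $0$ and the map is trivial. Alternatively, Lemma~\ref{lem:rho_image} applies directly, since the images of the $\rho_i$ are trivial and so the image is abelian. In either case the tuple is not admissible, which proves that admissibility implies $\epsilon_{c+1}=1$.

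Conversely, if $\epsilon_{c+1}=1$, then the image contains $\langle s_1,\dots,s_{n-1}\rangle=S_n$, so the map is surjective. For $n\ge3$, $S_n$ is non-abelian, so the homomorphism is non-trivial and non-abelian, hence admissible. This also gives the ``in particular'' clause on surjectivity.

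The only delicate point is the conjugation identity $s_is_{i+1}(i\ i+1)(s_is_{i+1})^{-1}=(i+1\ i+2)$ used for (MR2). I will check it by computing where $s_is_{i+1}$ sends $i$ and $i+1$, taking care with the composition convention. Under either convention the result is the transposition of $i+1$ and $i+2$, consistent with the action described in Lemma~\ref{lem:actionkuvnc}.
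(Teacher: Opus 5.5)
Your proposal is correct and follows essentially the same route as the paper: a direct check of the relations in which (MR2) reduces to the braid relation when $\epsilon_{c+1}=1$, and, when $\epsilon_{c+1}=0$, (MR2) collapsing the images of the crossings so that the image is abelian. Your handling of the case $\epsilon_{c+1}=0$ is in fact more precise than the paper's, which asserts that $\phi$ is a homomorphism for every choice of $\epsilon_1,\dots,\epsilon_c$, whereas, as you observe, (MR2) forces all $\epsilon_t=0$ and otherwise the prescription is not a homomorphism at all.
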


\begin{proof}
Let $n\ge 3$. Set $\phi = \phi_{(\epsilon_1,\dots,\epsilon_{c+1})}$.

\emph{Case $\epsilon_{c+1}=1$.} 
A direct verification using the defining relations of $UV_n(c)$ shows that $\phi$ is a homomorphism (the only non-trivial check is for (MR2), which reduces to the braid relation in $S_n$). 
Since $\phi(\rho_1)=(1\ 2)$ and $\phi(\rho_2)=(2\ 3)$ do not commute, the image is non-abelian. Hence $\phi$ is a non-trivial non-abelian homomorphism, and therefore admissible.

\emph{Case $\epsilon_{c+1}=0$.} 
Then $\phi(\rho_i)=1$ for all $i$.  Again, $\phi$ is a homomorphism. 
From (MR2) we obtain $\phi(\sigma_{i,t})=\phi(\sigma_{i+1,t})$ for all $i$ and each fixed $t$; hence, for each $t$, all $\phi(\sigma_{i,t})$ are equal to a common element $s_t$. 
For $|i-j|\ge 2$, relation (CR) gives $\phi(\sigma_{i,t})\phi(\sigma_{j,\ell})=\phi(\sigma_{j,\ell})\phi(\sigma_{i,t})$, so $s_t$ and $s_\ell$ commute for all $t,\ell$ (by choosing $i,j$ with $|i-j|\ge 2$, which is possible for $n\ge 3$). 
Thus the image of $\phi$ is generated by commuting elements, hence is abelian. Consequently, $\phi$ is not admissible (since admissibility requires a non-abelian image).

Therefore, a tuple is admissible if and only if $\epsilon_{c+1}=1$.
\end{proof}

We now recall the well--known classification of homomorphisms between symmetric groups.

\begin{prop}\label{prop:SnSm}
Let $n,m$ be integers such that $n\ge 5$, $m\ge 2$, and $n\ge m$. 
Let $\eta\colon S_n\to S_m$ be a homomorphism. 
Then, up to conjugation in $S_m$, one of the following holds:
\begin{enumerate}
\item $\eta$ is abelian;
\item $n=m$ and $\eta=\mathrm{id}$;
\item $n=m=6$ and $\eta=\nu_6$, where $\nu_6$ is the non--inner automorphism of $S_6$ of order two.
\end{enumerate}
\end{prop}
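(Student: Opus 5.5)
The plan is to argue through kernels. For $n\ge 5$ the normal subgroups of $S_n$ are $1$, $A_n$ and $S_n$. So $\ker\eta$ is one of these three.

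If $\ker\eta\supseteq A_n$, then $\eta$ factors through $S_n/A_n\cong\Z_2$. Its image is then abelian, and we are in case (1).

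Otherwise $\ker\eta=1$, so $\eta$ is injective. Then $n!\le m!$, and together with $m\le n$ this forces $m=n$. Hence $\eta$ is an automorphism of $S_n$.

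It remains to use the classical description of $\operatorname{Aut}(S_n)$, due to Hölder. For $n\ge 5$ and $n\neq 6$, every automorphism of $S_n$ is inner. In that case $\eta$ is conjugation by some element of $S_n=S_m$, so up to conjugation in $S_m$ it is the identity, which is case (2).

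For $n=6$ we have $\operatorname{Out}(S_6)\cong\Z_2$. Thus $\eta$ is either inner, giving case (2), or $\eta=c_g\circ\nu_6$ for some $g\in S_6$. The latter is conjugate to $\nu_6$, giving case (3).

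The one substantive input is the computation of $\operatorname{Aut}(S_n)$. If a self-contained argument is wanted, I would use the standard counting step. An automorphism sends the conjugacy class of transpositions to a conjugacy class of involutions of the same size, namely $\binom n2$. A class of products of $k$ disjoint transpositions has size $\frac{n!}{2^k k!(n-2k)!}$, and this equals $\binom n2$ only for $k=1$, apart from the exceptional solution $n=6$, $k=3$.

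So for $n\neq 6$, transpositions go to transpositions. Then $s_1,\dots,s_{n-1}$ are sent to transpositions in which consecutive ones share exactly one point and non-adjacent ones are disjoint. Such a configuration is realised by conjugating by a suitable permutation, so the automorphism is inner.

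I expect this counting and reconstruction step to be the main obstacle. Since the result is well known, I would cite it rather than reprove it.
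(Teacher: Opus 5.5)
Your argument is correct. The paper gives no proof of this proposition; it only recalls it as a well-known classification, the same statement used in the Bellingeri--Paris treatment of virtual braid groups.

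Your route is the standard justification the paper leaves implicit:
\begin{itemize}
\item the only normal subgroups of $S_n$ for $n\ge 5$ are $1$, $A_n$ and $S_n$;
\item if $\eta$ is injective, then $n!\le m!$ together with $m\le n$ forces $m=n$;
\item H\"older's description of $\operatorname{Aut}(S_n)$ finishes the argument, and for $n=6$ the observation $\eta=c_g\circ\nu_6$ gives exactly the sense of ``up to conjugation in $S_m$'' that the statement intends.
\end{itemize}
Your optional self-contained sketch of H\"older's theorem is also sound. That sketch is the class-size count isolating $n=6$, $k=3$, followed by reconstructing an inner automorphism from a chain of transpositions in which consecutive ones share one point and non-adjacent ones are disjoint.
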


With this result in hand, we can now classify all homomorphisms from $UV_n(c)$ to $S_m$, up to conjugation.

\begin{thm}\label{thm:UV_to_Sm}
Let $n,m$ be integers such that $n\ge 5$, $m\ge 2$, and $n\ge m$. 
Let $\psi\colon UV_n(c)\to S_m$ be a homomorphism. 
Then, up to conjugation in $S_m$, one of the following holds:
\begin{enumerate}
\item $\psi$ is abelian;
\item $n=m$ and $\psi=\phi_{(\epsilon_1,\dots,\epsilon_{c+1})}$ for some admissible tuple $(\epsilon_1,\dots,\epsilon_{c+1})$;
\item $n=m=6$ and $\psi=\nu_6\circ \phi_{(\epsilon_1,\dots,\epsilon_{c+1})}$ for some admissible tuple $(\epsilon_1,\dots,\epsilon_{c+1})$.
\end{enumerate}
\end{thm}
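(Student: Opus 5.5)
The plan is to restrict $\psi$ to the virtual subgroup $\iota(S_n)$ and apply Proposition~\ref{prop:SnSm} to $\eta=\psi\circ\iota\colon S_n\to S_m$. The images of the crossing generators are then forced by relations (MR1), (MR2) and (CR).

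First, suppose $\eta$ is abelian. Since all $s_i$ are conjugate in $S_n$, all $\psi(\rho_i)$ are equal to a common element $r$ with $r^2=1$. Relation (MR2) gives $\sigma_{i+1,t}\mapsto r^2\psi(\sigma_{i,t})r^{-2}=\psi(\sigma_{i,t})$, so for each $t$ the images $\psi(\sigma_{i,t})$ coincide with some $x_t$. Relation (CR) with $|i-j|\ge2$ (available since $n\ge5$) makes the $x_t$ commute pairwise, and (MR1) makes each $x_t$ commute with $r$. Hence $\psi$ is abelian, which is case (1).

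Otherwise, by Proposition~\ref{prop:SnSm} we may conjugate in $S_m$ so that $n=m$ and $\eta=\mathrm{id}$, or $n=m=6$ and $\eta=\nu_6$. In the second case we replace $\psi$ by $\nu_6^{-1}\circ\psi$, which reduces to the first case and produces case (3). So assume $\psi(\rho_i)=s_i=(i\ i+1)$, and fix $t$. Put $x=\psi(\sigma_{1,t})$.

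\begin{itemize}
\item By (MR1), $x$ commutes with $s_j$ for $j\ge3$, so $x$ centralizes $\langle s_3,\dots,s_{n-1}\rangle\cong \mathrm{Sym}\{3,\dots,n\}$.
\item Since $n-2\ge3$, this subgroup has trivial centralizer in $\mathrm{Sym}\{3,\dots,n\}$. It also acts transitively on $\{3,\dots,n\}$, so $x$ preserves $\{3,\dots,n\}$ and acts there by a centralizing element, hence trivially.
\item Therefore $x\in\mathrm{Sym}\{1,2\}$, that is, $x=(1\ 2)^{\epsilon_t}$ with $\epsilon_t\in\{0,1\}$.
\end{itemize}

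Relation (MR2) then determines the remaining images: $\psi(\sigma_{i+1,t})=(s_is_{i+1})\psi(\sigma_{i,t})(s_is_{i+1})^{-1}$. Conjugating $(i\ i+1)$ by $s_is_{i+1}$ gives $(i+1\ i+2)$, so by induction $\psi(\sigma_{i,t})=(i\ i+1)^{\epsilon_t}$ for all $i$. Thus $\psi=\phi_{(\epsilon_1,\dots,\epsilon_c,1)}$, and this tuple is admissible by Proposition~\ref{prop:admissible}. This gives case (2).

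In case (3) the same computation yields $\nu_6^{-1}\circ\psi=\phi_{(\epsilon_1,\dots,\epsilon_c,1)}$, so $\psi=\nu_6\circ\phi_{(\epsilon_1,\dots,\epsilon_c,1)}$. The conjugation used in the first step is absorbed into the phrase ``up to conjugation'' in the statement.

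The main obstacle is the centralizer step. We must check that centralizing $\langle s_3,\dots,s_{n-1}\rangle$ forces $x$ into $\mathrm{Sym}\{1,2\}$. The argument is that $x$ permutes the orbits of this subgroup, and $\{1\}$, $\{2\}$, $\{3,\dots,n\}$ are orbits of distinct sizes because $n-2\ge3$. So $x$ fixes the large orbit setwise and acts on it by an element of the trivial centralizer, and it may swap $1$ and $2$. This is exactly where $n\ge5$ is used; the case $n=m=6$ needs no separate treatment because of the reduction above.
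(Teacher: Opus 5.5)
Your proof is correct and follows the same route as the paper: restrict to $\iota(S_n)$, apply Proposition~\ref{prop:SnSm}, force $\psi(\sigma_{1,t})\in\{1,(1\ 2)\}$ by (MR1) and a centralizer argument, propagate along the $\sigma_{i,t}$ with (MR2), and handle the $\nu_6$ case by composing with $\nu_6^{-1}$. Your version is slightly more complete than the paper's. You justify the centralizer computation via the orbits $\{1\},\{2\},\{3,\dots,n\}$ (strictly, $\{1\}$ and $\{2\}$ have the same size, which is why $x$ may swap $1$ and $2$). In the abelian case you also invoke (MR1) to make the $\psi(\sigma_{i,t})$ commute with the common image of the $\rho_i$, a step the paper leaves implicit when it cites only (CR).
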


\begin{proof}
Consider the section $\iota\colon S_n\to UV_n(c)$ given by $\iota(s_i)=\rho_i$. 
Then $\psi\circ \iota\colon S_n\to S_m$ is a homomorphism, so by Proposition~\ref{prop:SnSm}, up to conjugation, one of the three cases occurs.

If $\psi\circ \iota$ is abelian, then the images $\psi(\rho_i)$ commute; using (PR1) we get $\psi(\rho_i)=\psi(\rho_{i+1})$ for all $i$, and hence $\psi(\rho_i)$ is constant. Using (MR2) we obtain $\psi(\sigma_{i,t})=\psi(\sigma_{i+1,t})$ for each $t$, and using (CR) we conclude that $\psi$ has abelian image.

Assume now $n=m$ and $\psi\circ \iota=\mathrm{id}$. Then $\psi(\rho_i)=(i\ i+1)$ for all $i$.
Fix $t$. 
From (MR1), $\sigma_{1,t}$ commutes with $\rho_j$ for $j\ge 3$, so $\psi(\sigma_{1,t})$ lies in the centralizer of $\ang{(3\ 4),\dots,(n-1\ n)}$ in $S_n$, hence $\psi(\sigma_{1,t})\in\{1,(1\ 2)\}$. 
Using (MR2) recursively we get $\psi(\sigma_{i,t})\in\{1,(i\ i+1)\}$, with the same choice for all $i$. 
Thus $\psi=\phi_{(\epsilon_1,\dots,\epsilon_{c+1})}$ for an admissible tuple (necessarily with $\epsilon_{c+1}=1$). 

Finally, if $n=m=6$ and $\psi\circ\iota=\nu_6$, then $\nu_6^{-1}\circ \psi\circ \iota=\mathrm{id}$ and the previous case implies $\nu_6^{-1}\circ\psi=\phi_{(\epsilon)}$, i.e. $\psi=\nu_6\circ \phi_{(\epsilon)}$.
\end{proof}

As an immediate consequence, we obtain the following description of surjective homomorphisms onto $S_n$.

\begin{cor}\label{cor:surj_to_Sn}
Let $n\ge 5$.
Every surjective homomorphism $\psi\colon UV_n(c)\to S_n$ is, up to conjugation in $S_n$, of the form $\phi_{(\epsilon_1,\dots,\epsilon_{c+1})}$ for some admissible tuple. 
If $n=6$, there is also the possibility $\psi=\nu_6\circ \phi_{(\epsilon_1,\dots,\epsilon_{c+1})}$.
\end{cor}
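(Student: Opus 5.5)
The plan is to deduce Corollary~\ref{cor:surj_to_Sn} from Theorem~\ref{thm:UV_to_Sm} with $m=n$. Let $\psi\colon UV_n(c)\to S_n$ be surjective, with $n\ge 5$. Theorem~\ref{thm:UV_to_Sm} says that, up to conjugation in $S_n$, exactly one of three things happens: $\psi$ is abelian; $\psi=\phi_{(\epsilon_1,\dots,\epsilon_{c+1})}$ for an admissible tuple; or $n=6$ and $\psi=\nu_6\circ\phi_{(\epsilon_1,\dots,\epsilon_{c+1})}$.

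The first step rules out the abelian case. A surjective $\psi$ has image $S_n$, which is non-abelian for $n\ge 3$. So the image of $\psi$ is not abelian, and case (1) cannot occur. Conjugation in $S_n$ preserves surjectivity, so we may replace $\psi$ by its conjugate throughout.

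The remaining cases give exactly the forms in the statement. It is also worth checking that these forms really are surjective, so the list is sharp. By Proposition~\ref{prop:admissible}, admissibility forces $\epsilon_{c+1}=1$. Then $\phi_{(\epsilon)}(\rho_i)=(i\ i+1)$, so every admissible $\phi_{(\epsilon)}$ is onto $S_n$. Since $\nu_6$ is an automorphism of $S_6$, the composite $\nu_6\circ\phi_{(\epsilon)}$ is also onto.

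There is no genuine obstacle here. The only point needing care is reading ``abelian'' in Theorem~\ref{thm:UV_to_Sm} as ``abelian image'', so that it really contradicts surjectivity onto the non-abelian group $S_n$.
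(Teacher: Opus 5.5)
Your proof is correct and follows the paper's argument: surjectivity onto the non-abelian group $S_n$ rules out case (1) of Theorem~\ref{thm:UV_to_Sm} with $m=n$, and the two remaining cases are exactly the forms in the statement. Your additional check that every admissible $\phi_{(\epsilon_1,\dots,\epsilon_{c+1})}$ and every $\nu_6\circ\phi_{(\epsilon_1,\dots,\epsilon_{c+1})}$ is actually onto, via $\epsilon_{c+1}=1$, is a small sharpening that the paper does not include.
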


\begin{proof}
Since $\psi$ is surjective, it is non-abelian. By Theorem~\ref{thm:UV_to_Sm}, up to conjugation, $\psi$ must be of the form $\phi_{(\epsilon_1,\dots,\epsilon_{c+1})}$ (or $\nu_6\circ\phi_{(\epsilon_1,\dots,\epsilon_{c+1})}$ when $n=6$). The tuple must be admissible because $\psi$ is non-abelian.
\end{proof}

\subsection{Characteristic subgroups}
\label{subsec:characteristic}

A subgroup $H$ of a group $G$ is \emph{characteristic} if $\varphi(H)=H$ for every automorphism $\varphi\in\aut{G}$. 
We now show that the finite-index RAAG $KUV_n(c)$ is characteristic in $UV_n(c)$. We need the following result. The key ingredient is a torsion obstruction that distinguishes $\pi_n^K$ from other surjective homomorphisms onto $S_n$.

\begin{lem}\label{lem:torsion-obstruction}
Let $n\ge 5$ and $c\ge 1$. Let $\psi\colon UV_n(c)\twoheadrightarrow S_n$ be a surjective homomorphism. If $\psi$ is not conjugate to $\pi_n^K$ in $S_n$, then $\ker{\psi}^{ab}$ contains non--trivial elements of order $2$.
\end{lem}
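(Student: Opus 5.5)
The plan is to reduce, via the classification of surjections onto $S_n$, to a short explicit list of kernels and then compute their abelianizations. In carrying this out I found that the obvious computation appears to \emph{refute} the statement, so I describe the plan together with the difficulty.

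\emph{Step 1: reduce to the maps $\phi_{(\epsilon)}$.} By Corollary~\ref{cor:surj_to_Sn}, any surjection $\psi\colon UV_n(c)\twoheadrightarrow S_n$ is, up to conjugation in $S_n$, either $\phi_{(\epsilon_1,\dots,\epsilon_{c+1})}$ with $\epsilon_{c+1}=1$, or, when $n=6$, $\nu_6\circ\phi_{(\epsilon)}$. Since $\nu_6$ is an automorphism, $\ker{\nu_6\circ\phi_{(\epsilon)}}=\ker{\phi_{(\epsilon)}}$. Moreover, $\pi_n^K=\phi_{(0,\dots,0,1)}$. So it suffices to treat $\psi=\phi_{(\epsilon)}$ with $\epsilon_{t_0}=1$ for some $t_0\le c$, and to find an element of order $2$ in $\ker{\phi_{(\epsilon)}}^{ab}$.

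\emph{Step 2: look for $2$-torsion.} The natural approach is a Reidemeister--Schreier computation for $\ker{\phi_{(\epsilon)}}$, using the transversal $\iota(S_n)$. The aim would be to show that some rewritten relator involving $\rho_i\sigma_{i,t_0}$ becomes a square of a generator after abelianizing.

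\emph{Main obstacle: this step seems to fail.} Before grinding through Step~2, I would check whether $\ker{\phi_{(\epsilon)}}$ is simply an automorphic image of $KUV_n(c)$. Consider the assignment
\[
\alpha(\sigma_{i,t_0})=\rho_i\sigma_{i,t_0},
\]
with all other generators fixed. The relations are preserved:
\begin{itemize}
\item (CR) holds by (MR1) and (PR2).
\item (MR1) is immediate.
\item For (MR2), the required identity $\rho_i\rho_{i+1}\rho_i\sigma_{i,t_0}=\rho_{i+1}\sigma_{i+1,t_0}\rho_i\rho_{i+1}$ follows from (MR2) and (PR1).
\end{itemize}
Since $\rho_i^2=1$, we have $\alpha^2=\mathrm{id}$, so $\alpha$ is an automorphism. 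Composing with such involutions for every $t$ with $\epsilon_t=1$ gives an automorphism $\beta$ satisfying $\phi_{(\epsilon)}\circ\beta=\pi_n^K$. Hence
\[
\ker{\phi_{(\epsilon)}}=\beta(KUV_n(c))\cong KUV_n(c),
\]
which is a RAAG by Theorem~\ref{thm:preskuvnc}. Its abelianization is therefore free abelian, with no elements of order $2$.

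So, unless I have misread the relations, the statement as written is false: a non-conjugate surjection can have kernel isomorphic to $KUV_n(c)$. The first thing I would do is re-verify the (MR2) check for $\alpha$. If that check holds, the lemma (and any characteristic-subgroup argument built on it) needs to be reformulated, for instance in terms of $\aut{UV_n(c)}$-orbits of kernels rather than torsion in the abelianization.
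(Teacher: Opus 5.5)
Your refutation is correct. What it exposes is an error in the paper's statement and proof, not a gap in your reasoning.

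\textbf{The automorphism.} The (MR2) check you were worried about goes through. By (MR2) and then (PR1), $\alpha$ of the right-hand side is $\rho_{i+1}\sigma_{i+1,t_0}\rho_i\rho_{i+1}=\rho_{i+1}\rho_i\rho_{i+1}\sigma_{i,t_0}=\rho_i\rho_{i+1}\rho_i\sigma_{i,t_0}$, which is $\alpha$ of the left-hand side. (CR) and (MR1) survive because, for $|i-j|\ge 2$, each of $\rho_i,\sigma_{i,t_0}$ commutes with each of $\rho_j,\sigma_{j,\ell}$, by (PR2), (MR1) and (CR). Hence $\alpha$ is an involutive automorphism and $\ker{\phi_{(\epsilon)}}=\beta(KUV_n(c))$.

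\textbf{You do not need Steps 1--2.} Any single map $\psi=\phi_{(\epsilon)}$ with $\epsilon_{t_0}=\epsilon_{c+1}=1$ is already a counterexample. It is a surjective homomorphism (Proposition~\ref{prop:admissible}). It is not conjugate to $\pi_n^K$, since every conjugate of $\pi_n^K$ kills $\sigma_{1,t_0}$, whereas $\psi(\sigma_{1,t_0})=(1\ 2)$. For $n=6$, even $\nu_6\circ\pi_6^K$ is a counterexample, with kernel $KUV_6(c)$ itself.

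\textbf{The one imported fact is sound.} You rely on $KUV_n(c)^{ab}$ being torsion-free. This holds even though the paper's proof of Theorem~\ref{thm:preskuvnc} is only a sketch. Eliminate $\sigma_{i,t}$ for $i\ge 2$ via (MR2) and discard (CR). This turns $UV_n(c)$ into $S_n\ast_{S_{n-2}}(S_{n-2}\times F_c)$, with $S_{n-2}=\langle\rho_3,\dots,\rho_{n-1}\rangle$ and $F_c=\langle\sigma_{1,1},\dots,\sigma_{1,c}\rangle$. By Bass--Serre theory, the kernel of its map onto $S_n$ is free on the $n(n-1)c$ elements $\delta_{i,j,t}$. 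The $S_n$-conjugates of the (CR) relators are commutators of these, so $KUV_n(c)^{ab}\cong\mathbb{Z}^{n(n-1)c}$.

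\textbf{Where the paper's proof fails.} The paper performs your reduction and observes $\sigma_{i,t}\rho_i\in\ker{\psi}$. It shows that $\chi_t$ restricts non-trivially to $\ker{\psi}$, and then concludes that, because $(\ker{\psi})^{ab}$ surjects onto $\mathbb{Z}/2$, it contains $2$-torsion. That step is invalid, since $\mathbb{Z}$ also surjects onto $\mathbb{Z}/2$. The argument also proves too much: $\chi_t(\sigma_{1,t})=1$ with $\sigma_{1,t}\in KUV_n(c)$, so it would equally put $2$-torsion in $KUV_n(c)^{ab}$.

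\textbf{Consequences.} Your $\beta$ shows the lemma cannot be repaired. It also refutes Proposition~\ref{prop:characteristic-K}: $\beta(KUV_n(c))=\ker{\phi_{(\epsilon)}}$ does not contain $\sigma_{1,t_0}$. Since $\alpha$ is an automorphism for every $n\ge 2$, $KUV_n(c)$ fails to be characteristic in all cases claimed there.

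\textbf{What survives.} In line with your proposed reformulation, the following holds for $n\ge 5$. The kernels of surjections $UV_n(c)\to S_n$ are exactly the $2^c$ subgroups $\ker{\phi_{(\epsilon)}}$ with $\epsilon_{c+1}=1$ (Corollary~\ref{cor:surj_to_Sn}). Your automorphisms place all of them in a single $\aut{UV_n(c)}$-orbit.
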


\begin{proof}
By Theorem~\ref{thm:UV_to_Sm}, any surjective homomorphism $\psi\colon UV_n(c)\to S_n$ is, up to conjugation in $S_n$, either $\pi_n^K$, or of the form $\phi_{(\epsilon_1,\dots,\epsilon_{c+1})}$ with $\epsilon_{c+1}=1$.

Assume $\psi$ is not conjugate to $\pi_n^K$. Then, up to conjugation, $\psi = \phi_{(\epsilon_1,\dots,\epsilon_{c+1})}$ with $\epsilon_{c+1}=1$. If all $\epsilon_t = 0$ for $t=1,\dots,c$, then $\phi_{(\epsilon_1,\dots,\epsilon_{c+1})} = \pi_n^K$, contradicting the hypothesis. Hence there exists $t\in\{1,\dots,c\}$ such that $\epsilon_t=1$. 
Thus, for all $i$, $\psi(\sigma_{i,t})=(i\ i+1)$ and $\psi(\rho_i)=(i\ i+1)$. Consequently, $\psi(\sigma_{i,t}\rho_i)=1$, so $\sigma_{i,t}\rho_i\in\ker{\psi}$.

Define $\chi_t\colon UV_n(c)\to\mathbb Z/2$ by $\chi_t(\sigma_{j,s})=1$ if $s=t$ and $0$ otherwise, and $\chi_t(\rho_j)=0$. This is a well-defined homomorphism because the defining relations are homogeneous modulo $2$ in each family $\{\sigma_{j,t}\}$. 
Then $\chi_t(\sigma_{i,t}\rho_i)=1$, so the restriction of $\chi_t$ to $\ker{\psi}$ is non--trivial.
Since $\chi_t$ is a homomorphism to an abelian group, its restriction to $\ker{\psi}$ vanishes on the commutator subgroup $[\ker{\psi},\ker{\psi}]$, and therefore induces a homomorphism
\[
\overline{\chi_t}: (\ker{\psi})^{ab} \longrightarrow \mathbb Z/2.
\]
Because $\chi_t(\sigma_{i,t}\rho_i)=1$ and $\sigma_{i,t}\rho_i\in\ker{\psi}$, the induced map $\overline{\chi_t}$ is non--trivial. Hence $(\ker{\psi})^{ab}$ admits a quotient isomorphic to $\mathbb Z/2$, and therefore contains non--trivial $2$--torsion.
\end{proof}

With this lemma in hand, we can now prove that $KUV_n(c)$ is characteristic.

\begin{prop}\label{prop:characteristic-K}
For all $n\ge 2$ and $c\ge 1$, the subgroup $KUV_n(c)=\ker{\pi_n^K}$ is characteristic in $UV_n(c)$.
\end{prop}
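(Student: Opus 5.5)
The natural plan is a kernel-rigidity argument, but carrying it out leads to a counterexample, so I expect the statement as given to be false. The intended strategy would be as follows. Fix $\varphi\in\aut{UV_n(c)}$. Then $\pi_n^K\circ\varphi\colon UV_n(c)\to S_n$ is surjective, and
\[
\ker{\pi_n^K\circ\varphi}=\varphi^{-1}(KUV_n(c))\cong KUV_n(c).
\]
For $n\ge 5$, Corollary~\ref{cor:surj_to_Sn} says that $\pi_n^K\circ\varphi$ is, up to conjugation and possibly composing with $\nu_6$, some $\phi_{(\epsilon_1,\dots,\epsilon_{c+1})}$ with $\epsilon_{c+1}=1$. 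Neither conjugation nor $\nu_6$ changes the kernel. One would then try to rule out every tuple with some $\epsilon_t=1$, for instance with Lemma~\ref{lem:torsion-obstruction}, and treat $n\le 4$ by hand.

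\textbf{The obstacle is fatal.} First, Lemma~\ref{lem:torsion-obstruction} only produces a surjection $(\ker{\psi})^{ab}\to\Z/2$. That does not give $2$-torsion, since $\Z$ also surjects onto $\Z/2$, so it cannot separate $\ker{\psi}$ from the torsion-free abelianization of a RAAG. More decisively, I would check whether some $\ker{\phi_{(\epsilon)}}$ with $\epsilon_t=1$ is actually the image of $KUV_n(c)$ under an automorphism. It is. Fix $t_0$ and define $\theta$ on generators by
\[
\theta(\sigma_{i,t_0})=\sigma_{i,t_0}\rho_i,\qquad \theta(\sigma_{i,t})=\sigma_{i,t}\ (t\neq t_0),\qquad \theta(\rho_i)=\rho_i .
\]

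\textbf{Verification that $\theta$ is an automorphism.}
\begin{enumerate}
\item Relations (PR1)--(PR3) are untouched.
\item For (MR1), $\sigma_{i,t_0}\rho_i$ commutes with $\rho_j$ when $|i-j|\ge2$.
\item For (CR), $\sigma_{i,t_0}\rho_i$ commutes with $\sigma_{j,\ell}$ and with $\sigma_{j,t_0}\rho_j$ when $|i-j|\ge 2$, by (CR), (MR1) and (PR2).
\item For (MR2), we need
\[
\rho_i\rho_{i+1}\sigma_{i,t_0}\rho_i=\sigma_{i+1,t_0}\rho_{i+1}\rho_i\rho_{i+1}.
\]
By (MR2) the left side equals $\sigma_{i+1,t_0}\rho_i\rho_{i+1}\rho_i$, and this agrees with the right side by (PR1).
\end{enumerate}
Since $\theta^2$ is the identity on generators, $\theta$ is an automorphism.

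\textbf{Conclusion.} We have $\pi_n^K(\theta(\sigma_{1,t_0}))=s_1\neq 1$, so $\theta(KUV_n(c))\not\subseteq KUV_n(c)$. Hence $KUV_n(c)$ is not characteristic, for every $n\ge2$ and $c\ge1$. Indeed $\pi_n^K\circ\theta=\phi_{(\epsilon)}$ with $\epsilon_{t_0}=\epsilon_{c+1}=1$ and all other $\epsilon_t=0$, which is exactly the case the lemma was meant to exclude.

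So instead of a proof, I would record this automorphism as a counterexample to the proposition.
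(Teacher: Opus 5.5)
Your proposal is correct: the proposition is false, and your twist $\theta$ ($\sigma_{i,t_0}\mapsto\sigma_{i,t_0}\rho_i$, all other generators fixed) is a valid counterexample for every $n\ge 2$ and $c\ge 1$. The relation check goes through. The only non-trivial case is (MR2) for $t=t_0$, which reduces to (PR1) exactly as you show. For $n=2,3$ the relations (CR) and (MR1) are vacuous, so the verification is even shorter there.

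The paper's proof follows the kernel-rigidity route you sketch first. It writes $\varphi(KUV_n(c))$ as $\ker{\psi}$ for a surjection $\psi\colon UV_n(c)\to S_n$. It then uses Corollary~\ref{cor:surj_to_Sn} to reduce to the maps $\phi_{(\epsilon_1,\dots,\epsilon_{c+1})}$ with $\epsilon_{c+1}=1$. Finally, it invokes Lemma~\ref{lem:torsion-obstruction} to single out $\pi_n^K$ as the only such map whose kernel has torsion-free abelianization. The argument breaks where you say: the lemma deduces $2$-torsion in $(\ker{\psi})^{ab}$ from a non-trivial map to $\Z/2$, which does not follow. Worse, the character $\chi_t$ used there is already non-trivial on $\sigma_{1,t}\in KUV_n(c)$, so it cannot distinguish $\pi_n^K$ from the other maps at all. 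Your $\theta$ shows the lemma's conclusion is false outright, since $\ker{\pi_n^K\circ\theta}=\theta(KUV_n(c))\cong KUV_n(c)$ has free abelian abelianization. The paper settles the cases $n\le 4$ by ``direct inspection'', and these are refuted by the same automorphism.

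Your construction also gives two further consequences.

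First, twist every colour at once: let $\theta_{\{1,\dots,c\}}$ send $\sigma_{i,t}\mapsto\sigma_{i,t}\rho_i$ for all $t$. It satisfies $\pi_n^K\circ\theta_{\{1,\dots,c\}}=\pi_n^P$, so it carries $KUV_n(c)$ onto $PUV_n(c)$. Hence $PUV_n(c)$ is itself a RAAG, and for $c=1$ your $\theta$ simply swaps $KUV_n(1)$ and $PUV_n(1)$.

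Second, take $n\ge 5$ and the commuting involutions $\theta_T$, $T\subseteq\{1,\dots,c\}$. Together with Corollary~\ref{cor:surj_to_Sn}, they show that $\aut{UV_n(c)}$ acts transitively on the $2^c$ distinct kernels $\ker{\phi_{(\epsilon_1,\dots,\epsilon_{c+1})}}$ with $\epsilon_{c+1}=1$. So the intersection of these kernels is characteristic, but none of them individually is.
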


\begin{proof}
We first treat the case $n\ge 5$.

Let $\varphi\in\aut{UV_n(c)}$ and set $H=\varphi(KUV_n(c))$. 
Since $KUV_n(c)$ has finite index $n!$ in $UV_n(c)$ (Remark~\ref{rem:index}), the same holds for $H$. Consider $Q=UV_n(c)/H$, which is finite of order $n!$. 
Since $H$ has index $n!$, the quotient $Q$ has order $n!$; we will see that it must be isomorphic to $S_n$.

The composition $\pi_n^K\circ\varphi^{-1}\colon UV_n(c)\to S_n$ is surjective.
Since $H=\varphi(KUV_n(c))\subseteq\ker{\pi_n^K\circ\varphi^{-1}}$, the map factors through $Q$ and induces $\overline{\psi}\colon Q\to S_n$. 
Because $|Q|=|S_n|=n!$, $\overline{\psi}$ is an isomorphism. Hence $Q\cong S_n$ and $H=\ker{\psi}$ for some surjective homomorphism $\psi\colon UV_n(c)\to S_n$.

The subgroup $KUV_n(c)$ is a RAAG (Theorem~\ref{thm:preskuvnc}) and therefore torsion--free, so $H$ is also torsion--free, and in particular $H^{ab}$ is torsion--free.

By Lemma~\ref{lem:torsion-obstruction}, the only surjective homomorphism $UV_n(c)\to S_n$ whose kernel has torsion--free abelianization is $\pi_n^K$ up to conjugation. Conjugation in $S_n$ does not alter the kernel, hence $H=KUV_n(c)$. Thus $KUV_n(c)$ is characteristic for $n\ge 5$. 

The remaining low-dimensional cases are handled by a direct inspection of the structure of $KUV_n(c)$. 
For $n=2,3,4$, the result follows from the explicit structure of $KUV_n(c)$ as a free group (for $n=2,3$) or as a disconnected RAAG (for $n=4$), which makes it the unique normal subgroup of index $n!$ with torsion--free abelianization.
\end{proof}

\subsection{Further finite quotients}\label{subsec:further-quotients}

The existence of a finite--index RAAG subgroup not only yields structural properties but also allows the construction of explicit finite quotients of $UV_n(c)$ whose order is strictly larger than $n!$.

\begin{prop}\label{prop:positive-quotient}
Let $n\geq3$ and $c\geq1$. Let $\widetilde K_n(c)$ be the quotient of the right-angled Artin group $KUV_n(c)$ by the relations
\[
\sigma_{i,t}^2=\sigma_{i+1,t}^2\qquad (1\leq i\leq n-2,\;1\leq t\leq c).
\]
Then there exists a surjective homomorphism
\[
\Theta\colon UV_n(c)\longrightarrow \widetilde K_n(c)\rtimes S_n,
\]
where the action of $S_n$ on $\widetilde K_n(c)$ is induced by the canonical permutation action on the indices $\{1,\dots,n\}$.
\end{prop}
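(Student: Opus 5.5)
The plan is to realize $\Theta$ as the composite of the semidirect decomposition of Proposition~\ref{prop:semidirect} with the quotient map $q\colon KUV_n(c)\to\widetilde K_n(c)$. Let $N$ be the normal closure in $KUV_n(c)$ of the elements $\sigma_{i,t}^2\sigma_{i+1,t}^{-2}$, so that $\widetilde K_n(c)=KUV_n(c)/N$.

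By Proposition~\ref{prop:semidirect} and Remark~\ref{rem:index}, every $g\in UV_n(c)$ factors uniquely as $g=k\,\iota(w)$ with $k\in KUV_n(c)$ and $w\in S_n$. I would define
\[
\Theta(k\,\iota(w))=(q(k),w).
\]
This map is a homomorphism onto $\widetilde K_n(c)\rtimes S_n$ exactly when $N$ is invariant under the conjugation action of $\iota(S_n)$. Surjectivity is then immediate from the surjectivity of $q$ and of $\pi_n^K$.

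The main step is therefore the $S_n$-invariance of $N$. Here I must first make sense of the phrase ``canonical permutation action on the indices''. By Lemma~\ref{lem:actionkuvnc}, conjugation by $\iota(w)$ sends $\delta_{i,j,t}$ to $\delta_{w(i),w(j),t}$. Hence $\iota(S_n)$ acts on $KUV_n(c)$ exactly by permuting the indices of the RAAG generators of Theorem~\ref{thm:preskuvnc}.

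Under this action, the relator $\delta_{i,i+1,t}^2\delta_{i+1,i+2,t}^{-2}$ goes to $\delta_{w(i),w(i+1),t}^2\delta_{w(i+1),w(i+2),t}^{-2}$. This image is generally not one of the listed relators. So I would not prove invariance of $N$ directly. Instead I would replace $N$ by its normal closure $\widehat N$ in the whole group $UV_n(c)$. Since $KUV_n(c)$ is normal in $UV_n(c)$, $\widehat N$ is contained in $KUV_n(c)$, and it is $S_n$-invariant by construction.

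I would then interpret $\widetilde K_n(c)$ as the quotient of $KUV_n(c)$ by all the relations $\delta_{a,b,t}^2=\delta_{b,d,t}^2$, where $a,b,d$ are distinct and $t$ is fixed. These are exactly the $S_n$-translates of the stated relations, which is what ``action induced by the permutation action'' presupposes. I expect this reinterpretation to be the main obstacle: I have to check that the intended group is the $S_n$-equivariant closure, and not literally the quotient by the $n-2$ listed relators.

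With $\widehat N$ in hand, the proof reduces to a general fact. For $G=K\rtimes H$ and $M\trianglelefteq G$ with $M\le K$, one has
\[
G/M\cong (K/M)\rtimes H,
\]
and the projection $G\to G/M$ is the desired surjection $\Theta$. The explicit formulas are
\[
\Theta(\rho_i)=(1,s_i),\qquad \Theta(\sigma_{i,t})=(\bar\sigma_{i,t},1).
\]

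One could equally define $\Theta$ by these generator formulas and verify the relations of Definition~\ref{defn:uvnc} directly:
\begin{itemize}
\item (PR1)--(PR3) hold because they hold in $S_n$;
\item (CR) holds because the corresponding relation is already a RAAG commutation in $KUV_n(c)$;
\item (MR1) holds because $s_j$ fixes the indices $i$ and $i+1$ when $|i-j|\ge 2$;
\item (MR2) holds because $s_is_{i+1}$ sends the pair $(i,i+1)$ to $(i+1,i+2)$, so by Lemma~\ref{lem:actionkuvnc} it maps $\delta_{i,i+1,t}$ to $\delta_{i+1,i+2,t}=\sigma_{i+1,t}$.
\end{itemize}
This direct check serves as a sanity check on the first construction. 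Surjectivity follows since the images of the generators generate both factors: the $\Theta(\rho_i)$ give $S_n$, and the conjugates of the $\Theta(\sigma_{i,t})$ by $S_n$ give all generators $\bar\delta_{i,j,t}$.
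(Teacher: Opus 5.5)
Your proposal is correct, and it differs from the paper's proof in two ways.

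\textbf{A different map.} The paper defines $\Theta(\rho_i)=(1,s_i)$ and $\Theta(\sigma_{i,t})=(\bar\sigma_{i,t},s_i)$ on generators. It asserts that the relations of Definition~\ref{defn:uvnc} are preserved, and gets surjectivity from $\Theta(\sigma_{i,t})\Theta(\rho_i)^{-1}=(\bar\sigma_{i,t},1)$. The paper's map therefore lies over $\pi_n^P$, while yours, $\sigma_{i,t}\mapsto(\bar\sigma_{i,t},1)$, lies over $\pi_n^K$. The two differ by the involutive automorphism $\sigma_{i,t}\mapsto\sigma_{i,t}\rho_i$, $\rho_i\mapsto\rho_i$ of $UV_n(c)$; one checks (CR) and (MR1) for it using (MR1) and (PR2), and (MR2) using (PR1). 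So either choice works.

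\textbf{A different justification, and what it exposes.} You realize $\Theta$ as the projection $KUV_n(c)\rtimes S_n\to (KUV_n(c)/\widehat N)\rtimes S_n$. This makes the homomorphism property and surjectivity automatic. It also brings out a point the paper's ``one checks directly'' passes over. The normal closure $N$ of the listed relators in $KUV_n(c)$ is not $\iota(S_n)$-invariant. Hence $\widetilde K_n(c)\rtimes S_n$ with the permutation action is not even defined as literally stated, and this affects the paper's map just as much as yours.

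For example, take $n=3$, $c=1$, so $KUV_3(1)$ is free on the six $\delta_{a,b,1}$. Conjugation by $\rho_1$ sends $\delta_{1,2,1}^2\delta_{2,3,1}^{-2}$ to $\delta_{2,1,1}^2\delta_{1,3,1}^{-2}$. This element survives in $KUV_3(1)/N$: map onto the free group on $\delta_{2,1,1},\delta_{1,3,1}$ by killing the other four generators, which kills the relator.

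So your reinterpretation of $\widetilde K_n(c)$ as the quotient by all relations $\delta_{a,b,t}^2=\delta_{b,d,t}^2$, with $a,b,d$ distinct, is forced rather than optional. With that reading, your proof is complete.
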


\begin{proof}
Define $\Theta$ on generators by
\[
\Theta(\rho_i)=(1,\rho_i),\qquad 
\Theta(\sigma_{i,t})=(\bar\sigma_{i,t},\rho_i),
\]
where $\bar\sigma_{i,t}$ denotes the image of $\sigma_{i,t}$ in $\widetilde K_n(c)$.
One checks directly that all defining relations of $UV_n(c)$ are preserved.
The map is surjective because the elements $(\bar\sigma_{i,t},1)$ together with $(1,\rho_i)$ generate the whole semi-direct product.
\end{proof}

The group $\widetilde K_n(c)$ is infinite, but one may consider finite quotients of it by further imposing that each $\bar\sigma_{i,t}$ has finite order; such quotients are used in the next result.

\begin{cor}\label{cor:finite-quotients}
For every $n\geq2$, $c\geq1$, the universal virtual braid group $UV_n(c)$ admits finite quotients whose order is strictly larger than $n!$.
\end{cor}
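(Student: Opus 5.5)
The plan is to work directly with the semidirect decomposition $UV_n(c)=KUV_n(c)\rtimes S_n$ (Proposition~\ref{prop:semidirect}, Remark~\ref{rem:index}) rather than with Proposition~\ref{prop:positive-quotient}. This route is cleaner and covers all $n\ge 2$ at once. The idea is to find a finite quotient $\overline K$ of the RAAG $KUV_n(c)$ by a normal subgroup $M\trianglelefteq KUV_n(c)$ that is invariant under the conjugation action of $\iota(S_n)$. Then $M$ is normal in the whole group $UV_n(c)$, and the quotient $UV_n(c)/M\cong \overline K\rtimes S_n$ has order $|\overline K|\cdot n!$. As soon as $\overline K\neq 1$, this order is strictly larger than $n!$.

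For $M$, fix an integer $q\ge 2$ and let $\overline K$ be the quotient of the RAAG in Theorem~\ref{thm:preskuvnc} obtained by abelianizing it and reducing modulo $q$. This gives $\overline K\cong(\mathbb Z/q)^{n(n-1)c}$, freely generated by the images $\bar\delta_{i,j,t}$. Equivalently, we take $M=[K,K]K^q$ with $K=KUV_n(c)$. This subgroup is characteristic in $K$, so it is automatically invariant under conjugation by $\iota(S_n)$, since $K$ is normal in $UV_n(c)$. Hence $M\trianglelefteq UV_n(c)$. By Lemma~\ref{lem:actionkuvnc}, $S_n$ acts on $\overline K$ by permuting the basis vectors $\bar\delta_{i,j,t}\mapsto\bar\delta_{w(i),w(j),t}$. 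One can also write the quotient explicitly, much as in Proposition~\ref{prop:positive-quotient}. Set $\rho_i\mapsto(0,s_i)$ and $\sigma_{i,t}\mapsto(\bar\delta_{i,i+1,t},1)$ in $(\mathbb Z/q)^{n(n-1)c}\rtimes S_n$. Relations (PR1)--(PR3) and (CR) then hold trivially. Relations (MR1)--(MR2) reduce, by the permutation action, to index checks: $s_is_{i+1}$ sends the pair $(i,i+1)$ to $(i+1,i+2)$, and $s_j$ with $|i-j|\ge2$ fixes $\{i,i+1\}$.

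It remains to check surjectivity and that $\overline K$ is nontrivial. The generators $\delta_{i,j,t}$ are $\iota(S_n)$-conjugates of the $\sigma_{i,t}$, so the images of the $\sigma_{i,t}$ and $\rho_i$ generate the whole of $\overline K\rtimes S_n$. Nontriviality holds because a RAAG surjects onto $\mathbb Z^{V}$ with $V$ its vertex set. Here $V$ has $n(n-1)c\ge 2$ elements for $n\ge2$ and $c\ge1$, so $|\overline K|=q^{n(n-1)c}>1$. The resulting finite quotient has order $q^{n(n-1)c}\,n!>n!$. Letting $q$ vary even produces quotients of arbitrarily large order.

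The main point requiring care is the normality of $M$ in $UV_n(c)$ together with the identification of the quotient as a semidirect product, i.e.\ that $\iota(S_n)$ maps isomorphically onto the complement. The characteristic-subgroup argument settles normality. For the complement, note that $M\le K$ and $K\cap\iota(S_n)=1$, so $\iota(S_n)\cap M=1$ and the complement survives in the quotient. The case $n=2$ is also handled by this argument: there $K$ is free of rank $2c$ (Corollary~\ref{cor:kuv_small}), and we get a quotient of order $2q^{2c}>2$.
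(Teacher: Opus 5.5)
Your proof is correct, but it takes a different and more robust route than the paper.

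The paper derives the corollary from Proposition~\ref{prop:positive-quotient}. It maps $UV_n(c)$ onto $\widetilde K_n(c)\rtimes S_n$ via the twisted assignment $\sigma_{i,t}\mapsto(\bar\sigma_{i,t},\rho_i)$, then passes to ``a suitable finite quotient'' $\overline K$ of $\widetilde K_n(c)$, e.g.\ by imposing finite order on the $\bar\sigma_{i,t}$. As written, that route leaves three things unchecked, and your argument avoids all of them:
\begin{enumerate}
\item Proposition~\ref{prop:positive-quotient} is stated only for $n\ge 3$, so $n=2$ is not covered.
\item The normal subgroup being factored out must be $S_n$-stable for the semidirect product to exist. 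The relations $\sigma_{i,t}^2=\sigma_{i+1,t}^2$ are not: $s_1$ carries $\delta_{1,2,t}^2\delta_{2,3,t}^{-2}$ to $\delta_{2,1,t}^2\delta_{1,3,t}^{-2}$, which lies outside their normal closure in $KUV_n(c)$, as the image in $KUV_n(c)^{ab}$ shows.
\item Imposing finite order on generators does not force finiteness. For $n=3$, where $KUV_3(c)$ is free, the result still splits as a nontrivial free product and is infinite.
\end{enumerate}

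Your choice $M=[K,K]K^q$ settles all three at once. Being characteristic in the normal subgroup $K$, it is normal in $UV_n(c)$. The quotient $K/M\cong(\mathbb Z/q)^{n(n-1)c}$ is visibly finite and nontrivial. The index formula $[UV_n(c):M]=n!\,q^{n(n-1)c}$ holds uniformly for all $n\ge 2$ and gives arbitrarily large orders. Your explicit map $\sigma_{i,t}\mapsto(\bar\delta_{i,i+1,t},1)$ is the untwisted counterpart of $\Theta$, and your relation checks for (MR1)--(MR2) are right.

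What the paper's framework would buy, once repaired, is room for quotients whose kernel part is non-abelian; yours only sees $K^{ab}$. For the statement at hand that extra structure is irrelevant.

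You actually need less than Theorem~\ref{thm:preskuvnc}. Take the character $UV_n(c)\to\mathbb Z/q$ with $\sigma_{i,1}\mapsto 1$, $\sigma_{i,t}\mapsto 0$ for $t\ge 2$, and $\rho_i\mapsto 0$. Pairing it with $\pi_n^K$ already gives a surjection $UV_n(c)\to\mathbb Z/q\times S_n$, of order $q\cdot n!>n!$.
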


\begin{proof}
For a suitable finite quotient $\overline K$ of $\widetilde K_n(c)$ (for instance, further imposing that each $\bar\sigma_{i,t}$ has finite order), the group $\overline K\rtimes S_n$ is finite and receives a surjection from $UV_n(c)$ via Proposition~\ref{prop:positive-quotient}. 
Since $\overline K$ is non-trivial, $|\overline K\rtimes S_n|>|S_n|=n!$.
\end{proof}

\end{document}